\documentclass[10pt]{article}

\usepackage[a4paper, left=2.5cm, right=2.5cm, top=3.53cm, bottom=3.53cm]{geometry}
\usepackage{setspace}
\usepackage{indentfirst}
\usepackage{abstract}
\usepackage{verbatim}
\usepackage{pifont}
\usepackage{chngcntr}
\usepackage{microtype}
\usepackage{graphicx}
\usepackage{booktabs}
\usepackage{caption}
\usepackage{enumitem}
\usepackage{calc}

\usepackage[noadjust,nocompress]{cite} 

\usepackage{tikz}
\usepackage{tikz-cd}        

\usepackage{amsmath}
\usepackage{amsthm}   

\usepackage{unicode-math}

\usepackage[colorlinks=true, linkcolor=blue, citecolor=blue, urlcolor=blue]{hyperref}

\allowdisplaybreaks[4]
\AtBeginDocument{
  \setlength{\abovedisplayskip}{6pt plus 2pt minus 2pt}
  \setlength{\belowdisplayskip}{6pt plus 2pt minus 2pt}
  \setlength{\abovedisplayshortskip}{4pt plus 1pt minus 1pt}
  \setlength{\belowdisplayshortskip}{4pt plus 1pt minus 1pt}
}

\newcommand{\C}{\mathbb{C}}
\newcommand{\Z}{\mathbb{Z}}

\numberwithin{equation}{section}

\newtheoremstyle{coddtie}
{6pt}
{6pt}
{\itshape}
{}
{\bfseries}
{}
{1em}
{\thmname{#1}\,\,\thmnumber{#2}\,\,\thmnote{#3}}

\theoremstyle{coddtie}
\newtheorem{theorem}{\textbf{Theorem}}[section]
\newtheorem{proposition}{\textbf{Proposition}}[section]
\newtheorem{lemma}{\textbf{Lemma}}[section]
\newtheorem{remark}{\textbf{Remark}}

\newtheoremstyle{coddtiex}
{10pt}
{10pt}
{}
{}
{\bfseries}
{}
{1em}
{\thmname{#1}\,\,\thmnumber{#2}\,\,\thmnote{#3}}

\theoremstyle{coddtiex}

\NewDocumentCommand{\llrr}{O{1}O{n}}{\left\lBrack  #1,#2 \right\rBrack}

\begin{document}

\title{\bfseries Simple modules over the superconformal algebra $\mathcal{S}^{\prime}(1,n)$}
\author{Jinxin Hu$^1$, Rencai L\"u$^2$ and Xinyue Wang$^3$}
\date{}
\maketitle

\footnotetext[1]{J. Hu, Department of Mathematics, Soochow University, Suzhou 215506, China,
{\em E-mail address }: \url{20244007004@stu.suda.edu.cn}}
\footnotetext[2]{R. L\"u, Department of Mathematics, Soochow University, Suzhou 215506, China,
{\em E-mail address }: \url{rlu@suda.edu.cn}}
\footnotetext[3]{X. Wang, Department of Mathematics, Soochow University, Suzhou 215506, China,
{\em E-mail address }: \url{xywang741@suda.edu.cn}}

\begin{abstract}
Let $n\geq 2$, and let $\mathcal{S}(1,n)$ be the Lie superalgebra of zero-superdivergence superderivations of
$\mathbb{C}[t^{\pm1}]\otimes\Lambda(n)$. Its derived algebra
$\mathcal{S}^\prime(1,n):=[\mathcal{S}(1,n),\mathcal{S}(1,n)]$
is well known as a superconformal algebra.
In this paper, we first study Shen-Larsson modules over $\mathcal{S}^\prime(1,n)$.
These modules, introduced by G.~Shen and T.~A.~Larsson, are constructed from modules
over the Weyl superalgebra $K_{1,n}$ and the special linear Lie superalgebra
$\mathfrak{sl}(1,n)$. We establish necessary and sufficient conditions for the simplicity of Shen-Larsson
modules and investigate their simple subquotients in the non-simple case.
Then as an application, building on the classification of simple cuspidal
$\mathcal{S}^\prime(1,n)$-modules by C. Mart\'inez, O.~Mathieu and E.~Zelmanov,
we obtain an explicit construction of all simple cuspidal modules over $\mathcal{S}^\prime(1,n)$.

\vspace*{6pt}
\noindent
\textbf{Keywords: }Superconformal algebra; Cartan type Lie superalgebra; Shen-Larsson module; Cuspidal module

\end{abstract}

\section{Introduction}
We denote by $\mathbb{Z}$, $\mathbb{Z}_+$
and $\mathbb{C}$ the set of all integers, nonnegative integers and complex numbers, respectively.
The Witt algebra $W_1:=\operatorname{Der}(\mathbb{C}[t^{\pm 1}])$ and its
universal central extension, the Virasoro algebra, play a central role in both mathematics and theoretical physics.
Motivated by the profound applications of the Virasoro algebra in conformal field theory,
Neveu and Schwarz \cite{NS} and Ramond \cite{R} initiated the study of
superextensions of the Virasoro algebra, which later became known as
superconformal algebras. A systematic and formal treatment of these algebras was given
by Kac and van de Leur in \cite{KV}, where they recognized that all known superconformal
algebras coincide with infinite-dimensional Cartan type Lie superalgebras first introduced in \cite{K}.
Following \cite{KV},
a $\mathbb{Z}$-graded Lie superalgebra
$L = \bigoplus_{i\in\mathbb{Z}} L_i$ is called a superconformal algebra if

(1) $L$ is graded simple;

(2) the Witt algebra $W_1$ embeds into the even part $L_{\bar{0}}$ of $L$;

(3) the dimensions $\dim L_i$, $i\in \Z$, are uniformly bounded.

Denote by $\Lambda(n)$ the exterior algebra in $n$ odd variables $\xi_1,\xi_2,\cdots,\xi_n$.
Let $W(1,n)$ denote the Lie superalgebra of all superderivations of superalgebra $\mathbb{C}[t^{\pm 1}] \otimes \Lambda(n)$,
and let $\mathcal{S}(1,n)$ be its subalgebra consisting of superderivations with zero superdivergence.
Following Kac and van de Leur \cite{KV}, both $W(1,n)$ and $\mathcal{S}^\prime(1,n) := [\mathcal{S}(1,n),\mathcal{S}(1,n)]$ (for $n\geq 2$)
are superconformal algebras.
In particular,
$\mathcal{S}^\prime(1,2)$ is nothing else but
``$\mathrm{SU}_2$-superconformal algebra'' appearing in the physics literature
\cite{ABD,NS,CK,ET}.
It was shown in \cite{KV} that there exist non-trivial central extensions of $\mathcal{S}^\prime(1,n)$
only when $n=2$. The universal central extension of $\mathcal{S}^\prime(1,2)$ is called the small
$N = 4$ super Virasoro algebra.

The main aim of this paper is to study Shen-Larsson modules over
$\mathcal{S}^\prime(1,n)$. These modules were originally introduced by
G.~Shen and T.~A.~Larsson \cite{L,SG} for the Cartan type Lie algebra
$W_n$, which consists of all derivations of the Laurent polynomial algebra
$\mathbb{C}[t_i^{\pm1}\mid 1\leq i\leq n].$
They were later extended to the Lie superalgebra
$W(m,n):=\operatorname{Der}\big(\mathbb{C}[t_i^{\pm1}\mid 1\leq i\leq m]
\otimes \Lambda(n)\big).$
Shen-Larsson modules (called tensor modules) over $W(m,n)$ were studied in \cite{XW}. For other Cartan type Lie algebras, some results
on Shen-Larsson modules are also available; see, for example, \cite{DGYZ,FT,HL,LGW,EP}.

One of the key significances of studying Shen-Larsson modules is that, in the case of
Cartan type $W$ Lie (super)algebras, the classification problems for many classes of
simple modules eventually reduce to understanding these modules.
The classification of simple Harish-Chandra modules (weight modules
with finite-dimensional weight spaces) over Cartan type $W$ Lie algebras was obtained by Y.~Billig, V.~Futorny, D.~Grantcharov and V.~Serganova
\cite{BF1,GS2}. Their results show that many
simple Harish-Chandra modules can be realized in terms of Shen-Larsson
modules. In the super setting, the classification of simple strong Harish-Chandra
modules over Cartan type $W$ Lie superalgebras was established by Y.~Billig,
Y.~Cai, V.~Futorny, K.~Iohara, I.~Kashuba, R.~L\"u and Y.~Xue;
see \cite{CLX,XL,BFIK}. Shen-Larsson modules also play a crucial
role in these classifications.

Motivated by the aforementioned important results, we study Shen-Larsson modules over $\mathcal{S}^\prime(1,n)$.
Our construction yields a large family of simple modules over $\mathcal{S}^\prime(1,n)$,
including many simple weight modules as special cases.
We establish necessary and sufficient conditions for the simplicity of Shen-Larsson
modules and investigate their simple subquotients in the non-simple case.
This further provides a new perspective on simple cuspidal modules over $\mathcal{S}^\prime(1,n)$.
In their recent work \cite{MMZ}, C. Mart\'inez, O. Mathieu, and E. Zelmanov classified the simple cuspidal modules
over all known superconformal algebras of semisimple rank at least $1$, including $\mathcal{S}^\prime(1,n)$.
As an application, building on this classification, we give an explicit construction of all simple cuspidal modules over $\mathcal{S}^\prime(1,n)$.

The rest of this paper is organized as follows. Section 2 is devoted to recalling some basic notations and preliminary results
that will be needed in the subsequent sections.
In Section 3, we recall and investigate Shen-Larsson modules over the simple superconformal
algebra $\mathcal{S}^\prime(1,n)$.
The main results of this section are twofold: we establish necessary and sufficient conditions for the simplicity of these Shen-Larsson modules, and we
investigate their simple subquotients when they fail to be simple; see Theorems \ref{t1}, \ref{t3} and Propositions \ref{p6}, \ref{p98}, \ref{p99}.
Finally, in Section 4, we apply these results to give an explicit construction of all
simple cuspidal modules over $\mathcal{S}^\prime(1,n)$, which is presented in Theorem \ref{t100}.

\section{Preliminaries}
All the vector spaces and algebras in this paper are over $\mathbb{C}$. A super vector space $V$ is a vector space
endowed with a $\mathbb{Z}_2$-gradation $V = V_{\bar{0}}\oplus V_{\bar{1}}$.
The parity of a homogeneous element $v\in V_{i}$ is denoted by
$|v|=i\in\mathbb{Z}_2$. Throughout this paper, when we write $|v|$ for an element $v\in V$, we always assume that $v$
is a homogeneous element. 
Any module over a Lie superalgebra or an associative superalgebra is assumed to be $\mathbb{Z}_2$-graded.
Let $\mathfrak{G}$ be a Lie superalgebra or an associative superalgebra, there is a parity-change
functor $\Pi$ on the category of $\mathfrak{G}$-modules interchanging the $\Z_2$-grading of a module.
For $a,b\in\mathbb{Z}$, set $\llrr[a][b]:=\{a,a+1,\cdots,b\}$ if $a\leq b$,  otherwise, $\llrr[a][b]:=\varnothing$.

\subsection{The superconformal algebra $\mathcal{S}^\prime(1,n)$}

Assume that $I=\{i_1,i_2,\cdots,i_r\}$, $i_1<i_2<\cdots<i_r$, is a subset of $\llrr$, we set
$\xi_I:=\xi_{i_1}\xi_{i_2}\cdots\xi_{i_r}\in \Lambda(n)$. For convince, let $\xi_\varnothing:=1$.
Moreover, denote by $|I|$ the cardinality of $I\subseteq \llrr$.

The Witt superalgebra $\mathcal{W}(1,n)$ is the Lie superalgebra of super-derivations of $\mathbb{C}[t^{\pm 1}]\otimes\Lambda(n)$, that is,
\[\mathcal{W}(1,n):=\mathrm{Der}(\mathbb{C}[t^{\pm 1}]\otimes\Lambda(n))=\left\{ f_0\frac{\mathrm{d}}{\mathrm{d}t}+\sum_{i\in\llrr}^{}{f_i\frac{\partial}{\partial \xi _i}}
 \,\middle|\, f_i\in \mathbb{C}[t^{\pm 1}]\otimes\Lambda(n),i\in\llrr[0][n]\right\}.\]
$\mathcal{W}(1,n)$ has a standard basis
\[\left\{ t^m\xi _I\frac{\mathrm{d}}{\mathrm{d}t},t^m\xi _I\frac{\partial}{\partial \xi _i}
 \,\middle|\, m\in \mathbb{Z} ,I\subseteq\llrr,i\in\llrr \right\} \]
with Lie brackets given by
\[
\left[ t^m\xi _I\partial ,t^k\xi _J\partial ^{\prime} \right]
=t^m\xi _I\partial \left( t^k\xi _J \right) \partial ^{\prime}-\left( -1 \right) ^{\left( |I|+|\partial | \right)
 \left( |J|+|\partial ^{\prime}| \right)}t^k\xi _J\partial ^{\prime}\left( t^m\xi _I \right) \partial
 ,
\]
where $\partial,\partial ^{\prime}\in\left\{ \frac{\mathrm{d}}{\mathrm{d}t},\frac{\partial}{\partial \xi _1},
\frac{\partial}{\partial \xi _2},\cdots ,\frac{\partial}{\partial \xi _n} \right\} $.

For $n\geq 2$, let
$\mathcal{S}(1,n)$ be the subalgebra of the Witt superalgebra $\mathcal{W}(1,n)$ consisting of all superderivations with vanishing superdivergence.
Alternatively,
\[\mathcal{S}(1,n):=\left\{ f_0\frac{\mathrm{d}}{\mathrm{d}t}+\sum_{i\in\llrr}^{}{f_i\frac{\partial}{\partial \xi _i}}\in \mathcal{W}(1,n)
\,\middle|\, \frac{\mathrm{d}\left( f_0 \right)}{\mathrm{d}t}+\sum_{i\in\llrr}^{}{\left( -1 \right) ^{|f_i|}
\frac{\partial \left( f_i \right)}{\partial \xi _i}}=0\right\}.\]
Let
$\mathcal{S}^\prime(1,n):=[\mathcal{S}(1,n),\mathcal{S}(1,n)].$
This derived algebra is the superconformal algebra in \cite{KV}.

$\mathcal{S}^\prime(1,n)$ is spanned by the elements
\[
t^k\frac{\partial \left( \xi _I \right)}{\partial \xi _i}
\frac{\mathrm{d}}{\mathrm{d}t}+\left( -1 \right) ^{1+|I|}kt^{k-1}\xi _I\frac{\partial}{\partial \xi _i}
\quad\mathrm{and}\quad
t^k\frac{\partial \left( \xi _I \right)}{\partial \xi _i}\frac{\partial}{\partial \xi _j}
+t^k\frac{\partial \left( \xi _I \right)}{\partial \xi _j}\frac{\partial}{\partial \xi _i}
\]
for all $k\in\mathbb{Z}$, $I\subseteq \llrr$ and $i,j\in\llrr$. Moreover, we have
\[\mathcal{S}(1,n)=\mathcal{S}^\prime(1,n)\oplus\C \xi _{\llrr}\frac{\mathrm{d}}{\mathrm{d}t}.\]

\subsection{The Lie superalgebras $\mathfrak{gl}(1,n)$ and $\mathfrak{sl}(1,n)$}

The general linear Lie superalgebra $\mathfrak{gl}(1,n)$ consisting of all $(1+n)\times(1+n)$ matrices
has the $\mathbb{Z}_2$-gradation
\[\mathfrak{gl}(1,n)=\mathfrak{gl}(1,n)_{\bar{0}}\oplus\mathfrak{gl}(1,n)_{\bar{1}},\]
where the even part $\mathfrak{gl}(1,n)_{\bar{0}}=\mathfrak{gl}(1)\oplus\mathfrak{gl}(n)$ has basis
$\left\{ E_{0,0} \right\} \cup \left\{ E_{i,j} \,\middle|\, i,j\in\llrr \right\}$
and the odd part $\mathfrak{gl}(1,n)_{\bar{1}}$ has basis $\left\{ E_{0,i},E_{j,0} \,\middle|\,  i,j\in\llrr \right\}$. The Lie brackets are
given by \[\left[ E_{i,j},E_{s,t} \right] =\delta _{j,s}E_{i,t}-
\left( -1 \right) ^{\left| E_{i,j} \right|\cdot \left| E_{s,t} \right|}\delta _{t,i}E_{s,j}\,\,\,
\text{for all }i,j,s,t\in\llrr[0][n].\]
Moreover, $\mathfrak{gl}(1,n)$ has a $\mathbb{Z}$-gradation
\[\mathfrak{gl}(1,n)=\mathfrak{gl}(1,n)_{-1}\oplus\mathfrak{gl}(1,n)_{0}\oplus\mathfrak{gl}(1,n)_{1},\]
where $\mathfrak{gl}(1,n)_{-1}=\text{span}_{\mathbb{C}}\left\{ E_{i,0} \,\middle|\, i\in\llrr \right\}$,
$\mathfrak{gl}(1,n)_{1}=\text{span}_{\mathbb{C}}\left\{ E_{0,i} \,\middle|\, i\in\llrr \right\}$
and $\mathfrak{gl}(1,n)_{0}=\mathfrak{gl}(1,n)_{\bar{0}}$.

For any element $A=\sum_{i,j\in \llrr[0]}{a_{i,j}E_{i,j}}\in \mathfrak{gl}(1,n)$,
the supertrace of $A$ is defined via \[\mathrm{str}(A):=a_{0,0}-\sum_{i\in\llrr}a_{i,i}.\]
The special linear Lie superalgebra $\mathfrak{sl}(1,n)$ is defined as
\[\mathfrak{sl}(1,n):=\left\{ A\in \mathfrak{gl}(1,n)\,\middle|\,\mathrm{str}(A)=0 \right\}.\]
The $\mathbb{Z}_2$-gradation and $\mathbb{Z}$-gradation of $\mathfrak{sl}(1,n)$ are inherited from that of
$\mathfrak{gl}(1,n)$.
In the remainder of this paper, when we refer to $\mathfrak{gl}(1,n)$ and \(\mathfrak{sl}(1,n)\),
we always assume by default that \(n \geq 2\).

Let $\mathfrak{g}=\mathfrak{gl}(1,n)$ or $\mathfrak{sl}(1,n)$.
For any simple $\mathfrak{g}_{0}$-module $M$, we extend $M$ trivially to
a $\mathfrak{g}_{0}\oplus\mathfrak{g}_{-1}$-module. The induced module
$\mathrm{Ind}_{\mathfrak{g}_{0}\oplus\mathfrak{g}_{-1}}^{\mathfrak{g}}(M)$, which is called the Kac module
of $M$ and denoted by $K(M)$. Recall the following result from \cite{CV}.

\begin{lemma}[({\cite[Theorem 4.1]{CV}})]\label{l1}
For any simple $\mathfrak{g}_{0}$-module $M$, the corresponding Kac module
$K(M)$ has a unique maximal submodule, and its unique simple top is denoted by $L(M)$.
Any simple $\mathfrak{g}$-module is isomorphic to $L(M)$ for some simple $\mathfrak{g}_{0}$-module $M$ up to parity-change.
\end{lemma}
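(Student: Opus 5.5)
We prove the two assertions in turn: first that $K(M)$ has a unique maximal submodule, then that every simple $\mathfrak{g}$-module arises as such a top. Throughout we use the $\mathbb{Z}$-grading $\mathfrak{g}=\mathfrak{g}_{-1}\oplus\mathfrak{g}_0\oplus\mathfrak{g}_1$ and the facts that $\mathfrak{g}_{\pm1}$ are purely odd, abelian, and of dimension $n$.

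\textbf{Structure of $K(M)$.} By the PBW theorem, $K(M)\cong\Lambda(\mathfrak{g}_1)\otimes M$ as a $\mathfrak{g}_0$-module. Let $h=E_{0,0}$ (or, for $\mathfrak{sl}(1,n)$, the element $E_{0,0}+\tfrac1{n-1}\sum_i E_{i,i}$, which is central in $\mathfrak{g}_0$). Then $\mathrm{ad}\,h$ acts on $\mathfrak{g}_j$ by a nonzero scalar times $j$. Since $M$ is simple, the central element $h$ acts on $M$ by a scalar; this uses Schur's lemma, which holds for arbitrary simple modules over countable-dimensional algebras by Dixmier's argument. Consequently $K(M)=\bigoplus_{k=0}^n \Lambda^k(\mathfrak{g}_1)\otimes M$ is graded by $h$-eigenvalues, and each graded piece is a finite direct sum of $\mathfrak{g}_0$-modules.

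\textbf{Unique maximal submodule.} Any $\mathfrak{g}$-submodule $N$ is $h$-stable, hence graded. A submodule is proper if and only if its degree-$0$ component $N\cap(1\otimes M)$ vanishes. Indeed, $M$ is simple and generates $K(M)$, so if this intersection is nonzero it equals $1\otimes M$ and $N=K(M)$. Therefore the sum of all proper submodules still has zero degree-$0$ part, so it is proper. This sum is the unique maximal submodule, and $L(M)$ is the corresponding simple top.

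\textbf{Every simple module is some $L(M)$.} Let $V$ be a simple $\mathfrak{g}$-module. The main obstacle is that $V$ may be infinite-dimensional and non-weight, so we cannot simply take a highest-weight vector. Instead, set $V^{\mathfrak{g}_{-1}}=\{v\mid \mathfrak{g}_{-1}v=0\}$. This space is nonzero: $\Lambda(\mathfrak{g}_{-1})$ is finite-dimensional and nilpotent, so for any $v\neq0$, taking a maximal product $y_{i_1}\cdots y_{i_k}v\neq0$ of elements $y_{i}\in\mathfrak{g}_{-1}$ produces a nonzero vector killed by all of $\mathfrak{g}_{-1}$. Moreover, $V^{\mathfrak{g}_{-1}}$ is $\mathfrak{g}_0$-stable because $[\mathfrak{g}_0,\mathfrak{g}_{-1}]\subseteq\mathfrak{g}_{-1}$.

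It remains to find a simple $\mathfrak{g}_0$-submodule inside it. Since $V=U(\mathfrak{g})w=\Lambda(\mathfrak{g}_1)\Lambda(\mathfrak{g}_{-1})U(\mathfrak{g}_0)w$ for any $w\in V^{\mathfrak{g}_{-1}}$, one shows that $V^{\mathfrak{g}_{-1}}$ is a simple $\mathfrak{g}_0$-module. Let $W$ be a nonzero $\mathfrak{g}_0$-submodule of $V^{\mathfrak{g}_{-1}}$. Simplicity of $V$ gives $V=\Lambda(\mathfrak{g}_1)W$, which is graded by $h$ with $W$ sitting in the lowest degree. The lowest-degree part of $V$ is exactly $V^{\mathfrak{g}_{-1}}$, because applying $\mathfrak{g}_{-1}$ lowers degree. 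Hence $W=V^{\mathfrak{g}_{-1}}$, so $M:=V^{\mathfrak{g}_{-1}}$ is simple, possibly after a parity change to place it in the even part.

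Frobenius reciprocity then gives a nonzero homomorphism $K(M)\to V$, which is surjective since $V$ is simple. Thus $V\cong L(M)$ by the uniqueness of the simple top.
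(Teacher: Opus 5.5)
The paper does not prove this lemma; it quotes it from \cite[Theorem 4.1]{CV}. So your argument has to stand on its own.

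Your first part is sound once one slip is fixed. For $\mathfrak{sl}(1,n)$, the element $E_{0,0}+\frac{1}{n-1}\sum_i E_{i,i}$ has supertrace $-\frac{1}{n-1}\neq 0$, so it is not in $\mathfrak{sl}(1,n)$. Its $\mathrm{ad}$-eigenvalue on $\mathfrak{g}_1$ would also be $\frac{n-2}{n-1}$, which is zero for $n=2$. Use $h=nE_{0,0}+\sum_i E_{i,i}=\sigma(I_n)$ instead. It is central in $\mathfrak{g}_0$ and acts on $\mathfrak{g}_{\pm1}$ by $\pm(n-1)\neq 0$. With this $h$, the grading of $K(M)$ and the unique maximal submodule follow as you say, because Dixmier's lemma applies to the simple module $M$.

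The genuine gap is in the second part, exactly at the obstacle you flagged yourself. You take an arbitrary nonzero $\mathfrak{g}_0$-submodule $W\subseteq V^{\mathfrak{g}_{-1}}$ and assert that $V=\Lambda(\mathfrak{g}_1)W$ is graded by $h$, with $W$ in the lowest degree. This requires $h$ to act on $W$ by a scalar, or at least semisimply, and nothing you have established gives that:
\begin{itemize}
\item $W$ is not known to be simple (that is what you are trying to prove), so Dixmier's lemma does not apply to it.
\item $V$ is not known to be an $h$-weight module, since $h$ is not central in $\mathfrak{g}$.
\end{itemize}
Without this, $\sum_k\Lambda^k(\mathfrak{g}_1)W$ need not be a direct sum, and ``lowest degree'' has no meaning.

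You need a separate step showing that $h$ acts diagonalizably on every simple $\mathfrak{g}$-module. One way to do it:
\begin{itemize}
\item $V=U(\mathfrak{g}_0)\Lambda(\mathfrak{g}_1)\Lambda(\mathfrak{g}_{-1})v$ is finitely generated over $U(\mathfrak{g}_0)$. Hence it has a simple $\mathfrak{g}_0$-quotient $\pi\colon V\to S$, and $h$ acts on $S$ by a scalar $c$ (Dixmier).
\item The map $V\to\mathrm{Hom}_{U(\mathfrak{g}_0)}(U(\mathfrak{g}),S)$, $v\mapsto(u\mapsto\pi(uv))$, is a nonzero $\mathfrak{g}$-map, hence injective.
\item $U(\mathfrak{g})$ is a finite direct sum of $\mathrm{ad}\,h$-eigenspaces $U_j$ (eigenvalue $js$, $|j|\le n$), and each $U_j$ is a left $U(\mathfrak{g}_0)$-submodule. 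So $h$ acts on the coinduced module diagonalizably, with eigenvalues $c-js$.
\end{itemize}
Therefore $V$ is $h$-semisimple, with finitely many eigenvalues in $c+s\mathbb{Z}$.

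With this in hand your argument works, but you must also prove the inclusion $V^{\mathfrak{g}_{-1}}\subseteq$ (lowest eigenspace). The remark that $\mathfrak{g}_{-1}$ lowers degree only gives the reverse inclusion. For the missing one, note that $V^{\mathfrak{g}_{-1}}$ is $h$-stable. Any eigenvector $w\in V^{\mathfrak{g}_{-1}}$ with eigenvalue $\beta$ generates $V=\Lambda(\mathfrak{g}_1)U(\mathfrak{g}_0)w$, and all eigenvalues of $V$ are then of the form $\beta+ks$ with $k\ge 0$. So $\beta$ is the lowest eigenvalue.
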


\subsection{The Weyl superalgebras $K_{m,n}$ and $K^+_{m,n}$}
The Weyl superalgebra $K_{m,n}$
is the simple associative superalgebra
\[\mathbb{C}\left[t_1^{\pm 1},\cdots, t_m^{\pm 1},\xi_1,\cdots,\xi_n, \frac{\partial}{\partial t_1},\cdots,\frac{\partial}{\partial t_m},
\frac{\partial}{\partial \xi_1},\cdots,\frac{\partial}{\partial \xi_n}\right],\]
while $K^+_{m,n}$ is the simple associative superalgebra
\[\mathbb{C}\left[t_1,\cdots, t_m,\xi_1,\cdots,\xi_n, \frac{\partial}{\partial t_1},\cdots,\frac{\partial}{\partial t_m},
\frac{\partial}{\partial \xi_1},\cdots,\frac{\partial}{\partial \xi_n}\right].\]

A weight module $V$ over $K_{m,n}$ (resp.\ $K^+_{m,n}$) is a module such that all elements in
\[\left\{ t_i\frac{\partial}{\partial t_i},\xi _j\frac{\partial}{\partial \xi _j} \,\middle|\, i\in \llrr[1][m] ,j\in\llrr \right\}\]
act  diagonalizably  on $V$. 
Then $\lambda=(\lambda_1,\cdots, \lambda_{m+n})\in \C^{m+n}$ is called a weight of $V$ if the weight space $V_{\lambda}:=\{v\in V~|~ t_i\frac{\partial}{\partial t_i} v=\lambda_i v,\xi _j\frac{\partial}{\partial \xi _j}v=\lambda_{m+j} v\}$ is nonzero. We denote by $\mathrm{supp}(V)$ the weight set of a weight module $V$.

We recall the results on simple modules of $K^+_{m,n}$, see for example \cite{FGM,XL,XW}.

\begin{theorem}\label{t2}
(1) For a given $\lambda\in\mathbb{C}^m$, up to isomorphism,
the simple weight $K^+_{m,0}$-module $V$ with $\lambda\in \mathrm{supp}(V)$ is
uniquely determined, which is
isomorphic to
\[A_{m,0}^+(\lambda):=V_1\otimes V_2\otimes\cdots\otimes V_m,\]
where $V_i=\mathbb{C}[t_i]$ (resp.\ $\mathbb{C}[t_i^{\pm1}]/\mathbb{C}[t_i]$, $t_i^{\lambda_i}\mathbb{C}[t_i^{\pm1}]$)
if $\lambda_i\in\mathbb{Z}_+$ (resp.\ $-\mathbb{Z}_{>0}$, $\mathbb{C}\setminus\mathbb{Z}$).

(2) Any simple weight $K^+_{m,n}$-module is isomorphic to $A_{m,n}^+(\lambda):=A_{m,0}^+(\lambda)\otimes \Lambda(n)$ for some $\lambda \in \C $ up to a parity-change.

(3) Any simple weight $K_{m,n}$-module is isomorphic to
\[V_1\otimes V_2\otimes\cdots\otimes V_m\otimes\Lambda(n) \] up to a parity-change,
where each $V_i$ is isomorphic to  $ t_i^{\lambda_i}\mathbb{C}[t_i^{\pm 1}]$ for some $\lambda_i\in\mathbb{C}$.

(4) Any simple $K^+_{m,n}$-module (resp.\ $K_{m,n}$-module) is isomorphic to $P_1\otimes \Lambda(n)$ up to a parity-change,
where
\[P_1:=\left\{ p\in P \,\middle|\, \frac{\partial}{\partial \xi _i}u=0\,,\,
\forall i\in\llrr
 \right\}\]
is a simple $K^+_{m,0}$-module (resp.\ $K_{m,0}$-module).
\end{theorem}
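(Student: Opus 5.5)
The plan is to establish part (4) first, since it reduces everything else to the purely even case $n=0$. Then I deduce (2) and (3) from (1), (4) and the rank-one classification for $K_{1,0}$.

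\emph{Step 1: part (4).} The key observation is that $K^+_{m,n}\cong K^+_{m,0}\otimes \mathrm{Cl}_{2n}$ as associative superalgebras (super tensor product). Here $\mathrm{Cl}_{2n}=\C[\xi_i,\partial/\partial\xi_i]$ is isomorphic to $\mathrm{End}(\Lambda(n))$, since the odd variables supercommute with $t_i,\partial/\partial t_i$. The same holds for $K_{m,n}$. Let $V$ be a simple module.
\begin{itemize}
\item Put $P_1:=\{v\in V\mid \frac{\partial}{\partial\xi_i}v=0\ \forall i\}$. This is nonzero: take $v\neq0$ and apply a maximal product of the $\frac{\partial}{\partial\xi_i}$ that does not kill $v$. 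Using the odd Leibniz rules, one can always find some product that gives a nonzero vector annihilated by all $\frac{\partial}{\partial\xi_i}$.
\item $P_1$ is stable under $K^+_{m,0}$, because that subalgebra supercommutes with the $\frac{\partial}{\partial\xi_i}$.
\item The map $P_1\otimes\Lambda(n)\to V$, $p\otimes\xi_I\mapsto \xi_I p$, is a module homomorphism. It is surjective because $V$ is simple. It is injective because applying $\frac{\partial}{\partial\xi_I}$ for a minimal $I$ picks out components, exactly as in $\mathrm{End}(\Lambda(n))$.
\item Simplicity of $V$ then forces $P_1$ to be simple as a $K^+_{m,0}$-module. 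Indeed, for any submodule $Q\subseteq P_1$, the image $Q\otimes\Lambda(n)$ is a submodule of $V$.
\item The parity-change accounts for the $\Z_2$-grading of $P_1$, which may be concentrated in either parity, or in both.
\end{itemize}

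\emph{Step 2: part (1).} By (4) with $n=0$, weight modules are the issue. First treat $m=1$. In $K^+_{1,0}$ we have $t\frac{d}{dt}$ acting on $V_\lambda$ by $\lambda$, $t$ shifting weights by $+1$ and $\frac{d}{dt}$ by $-1$, with $\frac{d}{dt}t=t\frac{d}{dt}+1$. Then:
\begin{itemize}
\item The span of $t^k v$ and $\frac{d^k}{dt^k}v$ for a weight vector $v$ is a submodule, so every weight space of a simple module is one-dimensional.
\item The module is then determined by which of the maps $t:V_\mu\to V_{\mu+1}$ and $\frac{d}{dt}:V_{\mu+1}\to V_\mu$ vanish.
\item Since $\frac{d}{dt}t$ acts on $V_\mu$ by $\mu+1$ and $t\frac{d}{dt}$ by $\mu$, the only possible zeros occur at $\mu=-1$ (for $\frac{d}{dt}t$) or $\mu=0$ (for $t\frac{d}{dt}$), and simplicity dictates where the support stops.
\end{itemize}
This gives the three cases $\C[t]$, $\C[t^{\pm1}]/\C[t]$ and $t^\lambda\C[t^{\pm1}]$. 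For general $m$, note $K^+_{m,0}=\bigotimes_i K^+_{1,0}$. A simple weight module has weight spaces that are simple over the commutative torus part. Via a density or tensor argument (each factor's simple weight module has one-dimensional weight spaces, hence endomorphism ring $\C$), $V$ is the outer tensor product of simple weight modules of the factors. This uses that a simple module over $A\otimes B$ with $A$-part having scalar endomorphisms decomposes.

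\emph{Step 3: parts (2) and (3).} For (2), if $V$ is weight then $P_1$ is a weight $K^+_{m,0}$-module, since $\xi_j\frac{\partial}{\partial\xi_j}$ acts by $0$ on it and the torus stabilizes it. It is simple by Step 1, so (1) applies. Part (3) is identical: in $K_{1,0}$ the element $t$ is invertible, so no truncation can occur and only $t^{\lambda}\C[t^{\pm1}]$ survives.

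I expect the main obstacle to be the tensor-decomposition step for general $m$ in (1), where I need a Jacobson density / Schur argument. I would handle it by choosing a weight vector and noting that the weight space is one-dimensional. Then $V\cong\bigotimes_i K^+_{1,0}v_i$, and generation by a single weight vector gives an explicit isomorphism.
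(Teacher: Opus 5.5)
The paper does not prove this theorem; it quotes it from the literature. Your outline is the standard argument behind it: split off $K^+_{m,n}\cong K^+_{m,0}\otimes\mathrm{End}(\Lambda(n))$, analyse the rank-one Weyl algebra, then decompose as a tensor product. Parts (2)--(4) and the case $m=1$ of (1) go through, up to two small slips.

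First, for injectivity of $P_1\otimes\Lambda(n)\to V$ you need $I_0$ \emph{maximal} in $\{I\mid p_I\neq0\}$, not minimal. Applying the derivatives $\frac{\partial}{\partial\xi_i}$, $i\in I_0$, kills exactly the terms $\xi_Ip_I$ with $I\not\supseteq I_0$, so maximality leaves only $p_{I_0}$. Second, $P_1$ can never sit in both parities. Indeed, $(P_1)_{\bar0}$ and $(P_1)_{\bar1}$ are submodules over the purely even algebra $K^+_{m,0}$, and this is why ``up to parity-change'' suffices.

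The genuine gap is the case $m\geq2$ of (1). The lemma you invoke is only true in this form: if a simple $A\otimes B$-module $V$ \emph{contains} a simple $A$-submodule $W$ with $\mathrm{End}_A(W)=\C$, then $V\cong W\otimes U$. The existence of $W$ is exactly what you have not shown.

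Your proposed isomorphism $V\cong\bigotimes_iK^{(i)}v$ (with $K^{(i)}=\C[t_i,\partial_i]$, $\partial_i=\frac{\partial}{\partial t_i}$, $v\in V_\lambda$) requires each cyclic module $K^{(i)}v$ to be simple. Equivalently, the natural surjection $\bigotimes_iK^{(i)}v\to V$ must be injective. This does not follow from your $m=1$ analysis, which used simplicity of the \emph{whole} module to decide where the support stops. A priori one could have $\lambda_i=0$ and $\partial_iv\neq0$; then $t_i\partial_iv=0$ and $K^{(i)}v$ is not simple.

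The missing step is a global one-sidedness argument. Suppose $w\in V_\mu$ with $\mu_i=0$ and $u:=\partial_iw\neq0$. Then $t_iu=0$, so $t_i^a\partial_i^bu$ is $0$ for $a>b$ and a multiple of $\partial_i^{b-a}u$ for $a\leq b$. The $t_j,\partial_j$ with $j\neq i$ commute with these and do not change the $i$-th weight. Hence $K^+_{m,0}u\subseteq\bigoplus_{\nu_i\leq-1}V_\nu$ is a nonzero submodule not containing $w$, contradicting simplicity.

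Thus $\partial_i$ kills every vector of $i$-th weight $0$, so $\bigoplus_{\nu_i\geq0}V_\nu$ is a submodule. When $\lambda_i\in\Z$, simplicity then forces the $i$-th coordinates of $\mathrm{supp}(V)$ to lie entirely in $\Z_+$ or entirely in $-\Z_{>0}$.

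Also use that $\partial_i^kt_i^k$ and $t_i^k\partial_i^k$ act by nonzero scalars as long as one does not cross between the $i$-th weights $-1$ and $0$. Together these give $\mathrm{supp}(V)=\Gamma(\lambda)$ and one-dimensional weight spaces spanned by the vectors $\prod_iy_iv$, where each $y_i$ is a power of $t_i$ or of $\partial_i$. The structure constants then depend only on $\lambda$, so $V\cong A^+_{m,0}(\lambda)$ directly. The same computation shows that $A^+_{m,0}(\lambda)$ is simple, which you also need and did not address. Part (3) is unaffected, since there all $t_i$ are invertible.
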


We denote by $\Gamma(\lambda)$ the weight set of the simple weight $K^+_{m,0}$-module $A_{m,0}^+(\lambda)$,
that is,
\[\Gamma(\lambda)=X_1\times X_2\times \cdots \times X_m\subseteq \lambda+\mathbb{Z}^m,\]
where $X_i=\mathbb{Z}_+$ (resp.\ $-\mathbb{Z}_{>0}$, $\mathbb{C}\setminus\mathbb{Z}$)
if $\lambda_i\in\mathbb{Z}_+$ (resp.\ $-\mathbb{Z}_{>0}$, $\mathbb{C}\setminus\mathbb{Z}$).

The general linear Lie algebra $\mathfrak{gl}(n)$ can be regarded as a Lie subalgebra of $K^+_{n,0}$
by identifying $E_{i,j}$ with $t_i\frac{\partial}{\partial t_j}$ for all $i,j\in\llrr$.
Let $\lambda\in\mathbb{C}^n$ and consider the following subspace of the simple weight $K^+_{n,0}$-module $A^+_{n,0}(\lambda)$:
\[N(\lambda):=\left\{ v\in A_{n,0}^{+}(\lambda) \,\middle|\, \left( \sum_{s=1}^n{t_s\frac{\partial}{\partial t_s}} \right) v=|\lambda |v \right\},\]
where $|\lambda|:=\lambda_1+\lambda_2+\cdots+\lambda_n$.
According to \cite[Proposition 2.12]{BBL}, $N(\lambda)$ is a simple weight module
for both
$\mathfrak{gl}(n)$ and $\mathfrak{sl}(n)$.
We recall the following known result.
\begin{lemma}[({\cite[Lemma 2.4]{XW}})]\label{l4}
  Suppose that $n\geq 2$. Let $V$ be a simple weight $\mathfrak{gl}(n)$-module satisfying that
  \[\left(E_{i,i}+E_{i,i}E_{j,j}-E_{i,j}E_{j,i}\right)V=0,\] for all $i,j\in\llrr$ with $i\ne j$. Then $V\cong N(\lambda)$
  for some $\lambda\in \mathbb{C}^n$.
\end{lemma}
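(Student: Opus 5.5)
The idea is to realize $V$ inside the simple weight $K^+_{n,0}$-module $A^+_{n,0}(\lambda)$ through the identification $E_{i,j}=t_i\frac{\partial}{\partial t_j}$. Put $h_i:=E_{i,i}$. The hypothesis says that on $V$,
\[
E_{i,j}E_{j,i}=h_i(h_j+1)\qquad (i\neq j).
\]
In $A^+_{n,0}(\lambda)$ one has exactly $t_i\partial_j\, t_j\partial_i = t_i\partial_i(t_j\partial_j+1)$. So the relations are those of the "oscillator" realization. The plan is to build the correct $\lambda$ from a weight of $V$ and then match $V$ with $N(\lambda)$ by a uniqueness argument for simple weight modules satisfying these quadratic relations.

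First fix a weight $\mu=(\mu_1,\dots,\mu_n)$ of $V$, with $h_iv=\mu_iv$, and let $c=|\mu|$. The central element $\sum_s h_s$ acts on the simple module $V$ by the scalar $c$. The candidate is $\lambda:=\mu$, which gives $|\lambda|=c$ and $\mu\in\mathrm{supp}\,N(\lambda)$, because the weight-$\mu$ vector $t^\mu$ (suitably interpreted in each tensor factor) lies in $N(\lambda)$.

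Next I would show that a simple weight module satisfying the relations is determined by one weight. For $v\in V_\mu$, every PBW monomial in the $E_{i,j}$ applied to $v$ can be rewritten, using the relations $E_{i,j}E_{j,i}=h_i(h_j+1)$ together with the $\mathfrak{gl}(n)$ commutators, as a scalar multiple of an ordered product of the form $\prod E_{i,j}^{a_{ij}}$ with no pair $\{i,j\}$ occurring in both orders. I would then show that the weight space $V_\mu$ is one-dimensional. The commutant of $h$ in $U(\mathfrak{gl}(n))$, modulo the ideal generated by the relations, is spanned by products of "cycles" $E_{i_1 i_2}E_{i_2 i_3}\cdots E_{i_k i_1}$. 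Using the relations, each cycle reduces to a polynomial in the $h$'s. For example, $E_{12}E_{23}E_{31}$ can be reduced because the relations also imply $E_{i,j}E_{k,l}=E_{i,l}E_{k,j}$ up to $h$-corrections. I would derive this by applying the $\mathrm{ad}$-action of $\mathfrak{gl}(n)$ to the given relation, since the quadratic ideal is $\mathfrak{gl}(n)$-stable. Hence the weight-zero part of $U$ acts on $V_\mu$ by scalars. Since $V$ is simple, $V_\mu$ is a simple module over this commutative quotient, so it is one-dimensional, and $V$ is determined by $\mu$ together with the action of the weight-zero subalgebra, which is fixed by $\mu$. The standard argument then gives that two simple weight modules sharing a weight $\mu$ and satisfying the relations are isomorphic.

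Finally, $N(\lambda)$ satisfies the relations, since they hold in $K^+_{n,0}$, and it is simple by \cite[Proposition 2.12]{BBL}. It contains the weight $\mu$, so $V\cong N(\mu)$.

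The main obstacle is the middle step: I need to show that the relations force all cycle elements to act by polynomials in $h$ on weight spaces. Concretely, this means deriving the "determinantal" relations $E_{ij}E_{kl}-E_{il}E_{kj}\in$ (span of lower-order terms) from the given ones via the adjoint action. I would first check this for $n=3$ by hand and then argue that the $\mathrm{ad}$-submodule generated by the given relation contains the whole degree-two kernel of $U(\mathfrak{gl}(n))\to K^+_{n,0}$.
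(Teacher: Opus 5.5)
The paper does not prove this lemma. It quotes it from \cite[Lemma 2.4]{XW} and uses it only as a black box in Lemma~\ref{l6}, so there is no in-paper argument to compare your route with. Your argument runs as follows: a nonzero weight space $V_\mu$ of a simple weight module is a simple module over the weight-zero subalgebra $U(\mathfrak{gl}(n))_0$; the relations force $U(\mathfrak{gl}(n))_0$ to act on $V_\mu$ through a character that depends only on $\mu$; a simple weight module is determined by that character; and $N(\mu)$ realizes it. This is correct and self-contained. It also gives slightly more than the statement: every weight space of $V$ is one-dimensional, and $\lambda$ can be taken to be any weight of $V$.

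The step you flag as the main obstacle is much lighter than your plan suggests. You do not need the ad-submodule generated by the relation to be the whole degree-two kernel of $U(\mathfrak{gl}(n))\to K^+_{n,0}$. Also, the correction terms in $E_{i,j}E_{k,l}-E_{i,l}E_{k,j}$ are $\delta_{j,k}E_{i,l}-\delta_{k,l}E_{i,j}$, which are not $h$-terms in general. One bracket suffices. Put $r_{i,j}:=E_{i,i}+E_{i,i}E_{j,j}-E_{i,j}E_{j,i}$ and take $k\notin\{i,j\}$. Then
\[
[E_{k,i},r_{i,j}]=E_{k,i}+E_{k,i}E_{j,j}-E_{k,j}E_{j,i},
\]
and this lies in the annihilator $\mathrm{Ann}(V)$ of $V$ in $U(\mathfrak{gl}(n))$, since $\mathrm{Ann}(V)$ is a two-sided ideal. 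Hence $E_{k,j}E_{j,i}=E_{k,i}(E_{j,j}+1)$ on $V$.

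Now take distinct $i_1,\dots,i_m$ with $m\geq 3$. Since $E_{i_2,i_2}$ commutes with $E_{i_3,i_4},\dots,E_{i_m,i_1}$, you get
\[
E_{i_1,i_2}E_{i_2,i_3}\cdots E_{i_m,i_1}\equiv E_{i_1,i_3}E_{i_3,i_4}\cdots E_{i_m,i_1}\,(E_{i_2,i_2}+1)\pmod{\mathrm{Ann}(V)}.
\]
Inducting down to the hypothesis on $2$-cycles shows that this cycle acts on $V_\mu$ by the scalar $\mu_{i_1}\prod_{s=2}^{m}(\mu_{i_s}+1)$.

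You also need that $U(\mathfrak{gl}(n))_0$ is generated by $U(\mathfrak{h})$ (with $\mathfrak{h}$ spanned by the $E_{i,i}$) and such simple cycles. This follows from PBW and induction on degree: a zero-weight multiset of root vectors splits into simple directed cycles, and reordering only produces lower-degree zero-weight terms. Consequently every $u\in U(\mathfrak{gl}(n))_0$ is congruent to a polynomial $p_u$ in the $E_{i,i}$ modulo the two-sided ideal generated by the $r_{i,j}$. So $u$ acts by $p_u(\mu)$ on both $V_\mu$ and $N(\mu)_\mu$, which is exactly what your uniqueness step needs.

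Your aside about rewriting PBW monomials as scalar multiples of ordered products with no pair in both orders is not needed and can be dropped.
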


\section{Shen-Larsson modules of $\mathcal{S}^\prime(1,n)$}

From \cite{XW}, we know that there is a Lie superalgebra homomorphism $\pi$ from $\mathcal{W}(1,n)$
to the tensor superalgebra $K_{1,n}\otimes U(\mathfrak{gl}(1,n))$ given by
\begin{align*}
  \pi \left( t^k\xi _I\frac{\mathrm{d}}{\mathrm{d}t} \right) ={}&t^k\xi _I\frac{\mathrm{d}}{\mathrm{d}t}\otimes 1+kt^{k-1}\xi _I\otimes E_{0,0}+\left( -1 \right) ^{\left| I \right|-1}\sum_{s\in \llrr}^{}{\frac{\partial \left( t^k\xi _I \right)}{\partial \xi _s}\otimes E_{s,0}},
\\
\pi \left( t^k\xi _I\frac{\partial}{\partial \xi _i} \right) ={}&t^k\xi _I\frac{\partial}{\partial \xi _i}\otimes 1+kt^{k-1}
\xi _I\otimes E_{0,i}+\left( -1 \right) ^{\left| I \right|-1}\sum_{s\in \llrr}^{}{\frac{\partial \left( t^k\xi _I \right)}{\partial \xi _s}\otimes E_{s,i}}
,
\end{align*}
where $k\in\mathbb{Z}$, $I\subseteq\llrr$ and $i\in\llrr$.
Clearly, $\pi$ induces an associative superalgebra homomorphism from $U(\mathcal{W}(1,n))$
to $K_{1,n}\otimes U(\mathfrak{gl}(1,n))$, which we still denote by $\pi$.

We observe that
\begin{align*}
  &\pi \left( t^k\frac{\partial \left( \xi _I \right)}{\partial \xi _i}
  \frac{\mathrm{d}}{\mathrm{d}t}+\left( -1 \right) ^{1+|I|}kt^{k-1}\xi _I\frac{\partial}{\partial \xi _i} \right)
\\
={} &t^k\frac{\partial \left( \xi _I \right)}{\partial \xi _i}\frac{\mathrm{d}}{\mathrm{d}t}\otimes 1+kt^{k-1}\frac{\partial \left( \xi _I \right)}{\partial \xi _i}\otimes \left( E_{0,0}+E_{i,i} \right) +\left( -1 \right) ^{\left| \frac{\partial \left( \xi _I \right)}{\partial \xi _i} \right|-1}\sum_{s\in \llrr\setminus \left\{ i \right\}}^{}{t^k\frac{\partial \left( \xi _I \right)}{\partial \xi _s\partial \xi _i}\otimes E_{s,0}}
\\
&+\left( -1 \right) ^{1+|I|}kt^{k-1}\xi _I\frac{\partial}{\partial \xi _i}\otimes 1
+\left( -1 \right) ^{1+|I|}k\left( k-1 \right) t^{k-2}\xi _I\otimes E_{0,i}
\\
&+k\sum_{s\in \llrr\setminus
\left\{ i \right\}}^{}{t^{k-1}\frac{\partial \left( \xi _I \right)}{\partial \xi _s}\otimes E_{s,i}},
\end{align*}
and
\begin{align*}
&\pi \left( t^k\frac{\partial \left( \xi _I \right)}{\partial \xi _i}\frac{\partial}{\partial \xi _j}
  +t^k\frac{\partial \left( \xi _I \right)}{\partial \xi _j}\frac{\partial}{\partial \xi _i} \right)
\\
={} &t^k\frac{\partial \left( \xi _I \right)}{\partial \xi _i}\frac{\partial}{\partial \xi _j}\otimes 1+kt^{k-1}\frac{\partial \left( \xi _I \right)}{\partial \xi _i}\otimes E_{0,j}+t^k\frac{\partial \left( \xi _I \right)}{\partial \xi _j}\frac{\partial}{\partial \xi _i}\otimes 1+kt^{k-1}\frac{\partial \left( \xi _I \right)}{\partial \xi _j}\otimes E_{0,i}
\\
&+\left( -1 \right) ^{\left| \frac{\partial \left( \xi _I \right)}{\partial \xi _i} \right|-1}\sum_{s\in\llrr\setminus\{i,j\}}^{}{t^k\frac{\partial \left( \xi _I \right)}{\partial \xi _s\partial \xi _i}\otimes E_{s,j}}+\left( -1 \right) ^{\left| \frac{\partial \left( \xi _I \right)}{\partial \xi _j} \right|-1}\sum_{s\in\llrr\setminus\{i,j\}}^{}{t^k\frac{\partial \left( \xi _I \right)}{\partial \xi _s\partial \xi _j}\otimes E_{s,i}}
\\
&+\left( -1 \right) ^{\left| \frac{\partial \left( \xi _I \right)}{\partial \xi _i} \right|-1}t^k\frac{\partial \left( \xi _I \right)}{\partial \xi _i\partial \xi _j}\otimes \left( E_{i,i}-E_{j,j} \right)
.
\end{align*}
This
shows that $\pi(\mathcal{S}^\prime(1,n))\subseteq K_{1,n}\otimes U(\mathfrak{sl}(1,n))$.
Let $P$ be a $K_{1,n}$-module and $M$ be an $\mathfrak{sl}(1,n)$-module. We equip the tensor space $P\otimes_{\mathbb{C}} M$ with
an $\mathcal{S}^\prime(1,n)$-module structure via
\[x\cdot (p\otimes v):=\pi(x)\cdot (p\otimes v)\,\,,\,\,\forall x\in \mathcal{S}^\prime(1,n)\,,\,p\in P\,,\, v\in M.\]
The resulting $\mathcal{S}^\prime(1,n)$-module is
called the Shen-Larsson module and
denoted by $F(P,M)$.

\subsection{Simplicity of Shen-Larsson modules over $\mathcal{S}^\prime(1,n)$}

\begin{lemma}\label{l3}
  For all $i,j\in\llrr$, we have
$
 K_{1,n}\otimes E_{0,j}E_{0,i} \subseteq \pi(U(\mathcal{S}^\prime(1,n)))
$.
\end{lemma}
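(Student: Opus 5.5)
The plan is to work inside the image $A:=\pi(U(\mathcal{S}^\prime(1,n)))$, which is an associative subalgebra of $K_{1,n}\otimes U(\mathfrak{sl}(1,n))$. The key tool is the family of elements
\[
X_{k,j}:=\pi\Big(t^k\frac{\partial}{\partial\xi_j}\Big),
\]
obtained from the second family of spanning elements with $I=\{i\}$ and $i=j$ (up to a factor $2$). This gives
\[
X_{k,j}=t^k\partial_{\xi_j}\otimes 1+kt^{k-1}\otimes E_{0,j}.
\]
Because the $E_{0,j}$ span the abelian odd subalgebra $\mathfrak{gl}(1,n)_1$, we have $E_{0,i}E_{0,j}=-E_{0,j}E_{0,i}$ and $E_{0,j}^2=0$. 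We also have the relations $(t^k\partial_{\xi_j})(t^l\partial_{\xi_i})=-(t^l\partial_{\xi_i})(t^k\partial_{\xi_j})$ in $K_{1,n}$.

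The first step is to form products and supercommutator-type combinations of the $X$'s. Compute
\[
X_{k,j}X_{l,i}+X_{l,i}X_{k,j}
\]
(the Lie superbracket, which is $\pi$ of $[t^k\partial_{\xi_j},t^l\partial_{\xi_i}]=0$). This vanishes and gives nothing, so I would use the ordinary products $X_{k,j}X_{l,i}$ themselves. Expanding, the product is a sum of four terms:
\begin{itemize}
\item $t^{k+l}\partial_{\xi_j}\partial_{\xi_i}\otimes 1$,
\item $lt^{k+l-1}\partial_{\xi_j}\otimes E_{0,i}$ (sign to be tracked),
\item $kt^{k+l-1}\partial_{\xi_i}\otimes E_{0,j}$,
\item $klt^{k+l-2}\otimes E_{0,j}E_{0,i}$.
\end{itemize}
Fixing $m=k+l$ and varying $k$, the coefficients $1$, $l$, $k$, $kl$ are polynomial in $k$ of degree at most $2$. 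The $kl=k(m-k)$ term is the only one with a $k^2$ contribution. Taking a second finite difference in $k$ (a linear combination of $X_{k,j}X_{m-k,i}$ over $k=0,1,2$) isolates $t^{m-2}\otimes E_{0,j}E_{0,i}$ up to a nonzero constant, so $t^{m}\otimes E_{0,j}E_{0,i}\in A$ for all $m$. For $i=j$ the statement is trivial.

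The second step is to upgrade the coefficient ring from $\mathbb{C}[t^{\pm1}]$ to all of $K_{1,n}$. Since $E_{0,j}E_{0,i}$ supercommutes appropriately, I would multiply $t^m\otimes E_{0,j}E_{0,i}$ by images of elements whose $\mathfrak{gl}(1,n)$-parts are killed after multiplication. For $\xi_I$ and $\partial_{\xi}$ factors, multiplying by $X_{k,s}$ gives
\[
X_{k,s}\,(t^m\otimes E_{0,j}E_{0,i})=t^{k+m}\partial_{\xi_s}\otimes E_{0,j}E_{0,i}+kt^{k+m-1}\otimes E_{0,s}E_{0,j}E_{0,i}.
\]
The cubic term must then be handled. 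By the same finite-difference trick applied to triple products, one shows $t^m\otimes E_{0,s}E_{0,j}E_{0,i}\in A$ and subtracts. This is cleaner if done by downward induction on the degree in the $E_{0,\cdot}$, since products of more than $n$ of them vanish.

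For multiplication by $\xi_s$ and $\frac{\mathrm{d}}{\mathrm{d}t}$, I would use $\pi(t^k\xi_s\partial_{\xi_s}-t^k\xi_r\partial_{\xi_r})$ and the first family with $I=\{i\}$, i.e. $\pi(t^k\frac{\mathrm{d}}{\mathrm{d}t}-\ldots)$. Their extra terms involve $E_{s,s}-E_{r,r}$, $E_{0,0}+E_{i,i}$, $E_{s,0}$ and so on. Commuting these with $E_{0,j}E_{0,i}$ produces terms again of the form (Laurent polynomial)$\otimes$(products of $E_{0,\cdot}$), plus terms with $\mathfrak{g}_0$ or $\mathfrak{g}_{-1}$ factors on the right. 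To avoid the latter, I would instead use left multiplication combined with the first step's control, or take commutators $[\pi(x),\,f\otimes E_{0,j}E_{0,i}]$. Here $\mathrm{ad}$ of $\mathfrak{g}_0$ preserves $\mathrm{span}\{E_{0,a}E_{0,b}\}$, while $\mathrm{ad}\,E_{s,0}$ lowers it to something involving $\mathfrak{g}_0E_{0,\cdot}$.

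The main obstacle is exactly this bookkeeping of residual $\mathfrak{sl}(1,n)$ terms when generating $\xi_s$ and $\frac{\mathrm{d}}{\mathrm{d}t}$ coefficients. I would handle it by a filtration argument: let $B_d$ be the span of $K_{1,n}\otimes(\text{products of}\ge d\ \text{elements } E_{0,\cdot})$, and prove $B_d\subseteq A$ by downward induction on $d$, starting from $d=n+1$ where $B_{n+1}=0$. Each commutator then contributes only terms in $B_{d}$ with prescribed leading part, plus terms already in $B_{d+1}$ or expressible via elements of $A$ from the first step. The symbol-level statement then follows because $\mathrm{span}\{t^m,\ t^m\partial_{\xi_s},\ t^m\xi_s,\ t^m\frac{\mathrm{d}}{\mathrm{d}t}\}$ generates $K_{1,n}$ as an algebra.
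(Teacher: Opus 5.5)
\textbf{Gap: the $\xi$-coefficients are never produced.} Your first step is correct. The only $k^2$-term of $X_{k,j}X_{m-k,i}$ is $-k^2t^{m-2}\otimes E_{0,j}E_{0,i}$, so a second difference gives $\mathbb{C}[t^{\pm1}]\otimes E_{0,j}E_{0,i}\subseteq A$. Right multiplication by $\pi(\frac{\mathrm{d}}{\mathrm{d}t})=\frac{\mathrm{d}}{\mathrm{d}t}\otimes 1$ and $\pi(\frac{\partial}{\partial\xi_s})=X_{0,s}=\frac{\partial}{\partial\xi_s}\otimes 1$ then adds $\frac{\mathrm{d}}{\mathrm{d}t}$- and $\frac{\partial}{\partial\xi}$-factors with no correction terms, so your detour through cubic terms is unnecessary. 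But nothing in your second step actually yields $t^m\xi_I\otimes E_{0,j}E_{0,i}$ with $I\neq\varnothing$.

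Your closing appeal to algebra generators of $K_{1,n}$ does not help. You cannot show that $\{x\in K_{1,n}\mid x\otimes E_{0,j}E_{0,i}\in A\}$ is closed under products by multiplying inside $A$, since $(x\otimes E_{0,j}E_{0,i})(y\otimes E_{0,j}E_{0,i})=0$. The only evident stability is under multiplication by those $g$ with $g\otimes1\in A$, and you have no such element for $g=\xi_s$.

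The filtration argument does not supply a substitute either. The elements you name create no bare $\xi$ in degree two. For $r\neq s$ one has $\bigl[\pi(t^k\xi_r\frac{\partial}{\partial\xi_s}),\,t^m\otimes E_{0,j}E_{0,i}\bigr]=t^{k+m}\otimes[E_{r,s},E_{0,j}E_{0,i}]$. The commutator with $\pi(t^k\xi_s\frac{\partial}{\partial\xi_s}-t^k\xi_r\frac{\partial}{\partial\xi_r})$ is just a multiple of $t^{k+m}\otimes E_{0,j}E_{0,i}$. Left products with either put a bare $\xi_r$ only in front of cubic terms such as $E_{0,s}E_{0,j}E_{0,i}$.

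Elements whose $\frac{\mathrm{d}}{\mathrm{d}t}$-coefficient involves $\xi$'s have $E_{s,0}$-terms in their images. The commutator $[E_{s,0},E_{0,j}E_{0,i}]=(E_{s,j}+\delta_{s,j}E_{0,0})E_{0,i}-E_{0,j}(E_{s,i}+\delta_{s,i}E_{0,0})$ carries a $\mathfrak{gl}(1,n)_0$-factor, so it lies in none of your $B_d$. The asserted passage from $B_{d+1}\subseteq A$ to $B_d\subseteq A$ therefore has no mechanism behind it.

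\textbf{The fix: put the $\xi$'s into the factors before differencing.} Within your own framework this is easy. For $i\neq j$ and any $I$, set $I_1=I\setminus\{j\}$ and $I_2=I\cap\{j\}$.
Then $t^k\xi_{I_1}\frac{\partial}{\partial\xi_j}$ and $t^{m-k}\xi_{I_2}\frac{\partial}{\partial\xi_i}$ have zero superdivergence and lie in $\mathcal{S}^\prime(1,n)$. Each is a multiple of a second-family spanning element with both indices equal, for instance index set $I_1\cup\{j\}$ and indices $j,j$ for the first.
Their images are $t^k\xi_{I_1}\frac{\partial}{\partial\xi_j}\otimes1+kt^{k-1}\xi_{I_1}\otimes E_{0,j}+(\text{terms } t^k(\cdot)\otimes E_{s,j})$ and the analogue for the second factor.
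In the product, the operator parts and the $E_{s,\cdot}$-terms contribute at most linearly in $k$. So the only $k^2$-term is $\pm k^2t^{m-2}\xi_I\otimes E_{0,j}E_{0,i}$, and your second difference gives $\mathbb{C}[t^{\pm1}]\xi_I\otimes E_{0,j}E_{0,i}\subseteq A$.

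The paper does the analogous thing one degree higher. It takes $y_k=t^k\frac{\partial(\xi_I)}{\partial\xi_j}\frac{\mathrm{d}}{\mathrm{d}t}+(-1)^{1+|I|}kt^{k-1}\xi_I\frac{\partial}{\partial\xi_j}$, whose image contains $(-1)^{1+|I|}k(k-1)t^{k-2}\xi_I\otimes E_{0,j}$. It multiplies by $\pi(t^{m+3-k}\frac{\partial}{\partial\xi_i})$ and isolates the $k^3$-coefficient $\pm t^m\xi_I\otimes E_{0,j}E_{0,i}$ from $k=0,1,2,3$.

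Either way, your right-multiplication step then finishes the proof. This works because $K_{1,n}$ is spanned by the elements $t^m\xi_I(\frac{\mathrm{d}}{\mathrm{d}t})^a\frac{\partial}{\partial\xi_{s_1}}\cdots\frac{\partial}{\partial\xi_{s_r}}$.
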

\begin{proof}
  For all $I\subseteq\llrr$, $m,k\in\mathbb{Z}$ and $i,j\in\llrr$,
  we have
\begin{align*}
  &\pi \left( t^k\frac{\partial \left( \xi _I \right)}{\partial \xi _j}\frac{\mathrm{d}}{\mathrm{d}t}+\left( -1 \right) ^{1+|I|}kt^{k-1}\xi _I\frac{\partial}{\partial \xi _j} \right) \cdot \pi \left( t^{m-k}\frac{\partial}{\partial \xi _i} \right)
\\
={} &t^k\frac{\partial \left( \xi _I \right)}{\partial \xi _j}\frac{\mathrm{d}}{\mathrm{d}t}t^{m-k}\frac{\partial}{\partial \xi _i}\otimes 1+kt^{k-1}\frac{\partial \left( \xi _I \right)}{\partial \xi _j}t^{m-k}\frac{\partial}{\partial \xi _i}\otimes \left( E_{0,0}+E_{j,j} \right)
\\
&-\left( -1 \right) ^{\left| \frac{\partial \left( \xi _I \right)}{\partial \xi _j} \right|-1}\sum_{s\in \llrr\setminus \left\{ j \right\}}^{}{t^k\frac{\partial \left( \xi _I \right)}{\partial \xi _s\partial \xi _j}t^{m-k}\frac{\partial}{\partial \xi _i}\otimes E_{s,0}}+\left( -1 \right) ^{1+|I|}kt^{k-1}\xi _I\frac{\partial}{\partial \xi _j}t^{m-k}\frac{\partial}{\partial \xi _i}\otimes 1
\\
&-\left( -1 \right) ^{1+|I|}k\left( k-1 \right) t^{k-2}\xi _It^{m-k}\frac{\partial}{\partial \xi _i}\otimes E_{0,j}+k\sum_{s\in \llrr\setminus \left\{ j \right\}}^{}{t^{k-1}\frac{\partial \left( \xi _I \right)}{\partial \xi _s}t^{m-k}\frac{\partial}{\partial \xi _i}\otimes E_{s,j}}
\\
&+\left( m-k \right) t^k\frac{\partial \left( \xi _I \right)}{\partial \xi _j}\frac{\mathrm{d}}{\mathrm{d}t}t^{m-k-1}\otimes E_{0,i}+\left( m-k \right) kt^{k-1}\frac{\partial \left( \xi _I \right)}{\partial \xi _j}t^{m-k-1}\otimes \left( E_{0,0}+E_{j,j} \right) E_{0,i}
\\
&+\left( -1 \right) ^{\left| \frac{\partial \left( \xi _I \right)}{\partial \xi _j} \right|-1}\left( m-k \right) \sum_{s\in \llrr\setminus \left\{ j \right\}}^{}{t^k\frac{\partial \left( \xi _I \right)}{\partial \xi _s\partial \xi _j}t^{m-k-1}\otimes E_{s,0}E_{0,i}}
\\
&+\left( -1 \right) ^{1+|I|}\left( m-k \right) kt^{k-1}\xi _I\frac{\partial}{\partial \xi _j}t^{m-k-1}\otimes E_{0,i}
\\
&+\left( -1 \right) ^{1+|I|}\left( m-k \right) k\left( k-1 \right) t^{k-2}\xi _It^{m-k-1}\otimes E_{0,j}E_{0,i}
\\
&+\left( m-k \right) k\sum_{s\in \llrr\setminus \left\{ j \right\}}^{}{t^{k-1}\frac{\partial \left( \xi _I \right)}{\partial \xi _s}t^{m-k-1}\otimes E_{s,j}E_{0,i}}
\\
={} &t^m\frac{\partial \left( \xi _I \right)}{\partial \xi _j}\frac{\mathrm{d}}{\mathrm{d}t}\frac{\partial}{\partial \xi _i}\otimes 1+\left( m-k \right) t^{m-1}\frac{\partial \left( \xi _I \right)}{\partial \xi _j}\frac{\partial}{\partial \xi _i}\otimes 1
\\
&+kt^{m-1}\frac{\partial \left( \xi _I \right)}{\partial \xi _j}\frac{\partial}{\partial \xi _i}\otimes \left( E_{0,0}+E_{j,j} \right) +\left( -1 \right) ^{1+|I|}kt^{m-1}\xi _I\frac{\partial}{\partial \xi _j}\frac{\partial}{\partial \xi _i}\otimes 1
\\
&-\left( -1 \right) ^{\left| \frac{\partial \left( \xi _I \right)}{\partial \xi _j} \right|-1}\sum_{s\in \llrr\setminus \left\{ j \right\}}^{}{t^m\frac{\partial \left( \xi _I \right)}{\partial \xi _s\partial \xi _j}\frac{\partial}{\partial \xi _i}\otimes E_{s,0}}
\\
&-\left( -1 \right) ^{1+|I|}k\left( k-1 \right) t^{m-2}\xi _I\frac{\partial}{\partial \xi _i}\otimes E_{0,j}+k\sum_{s\in \llrr\setminus \left\{ j \right\}}^{}{t^{m-1}\frac{\partial \left( \xi _I \right)}{\partial \xi _s}\frac{\partial}{\partial \xi _i}\otimes E_{s,j}}
\\
&+\left( m-k \right) t^{m-1}\frac{\partial \left( \xi _I \right)}{\partial \xi _j}\frac{\mathrm{d}}{\mathrm{d}t}\otimes E_{0,i}+\left( m-k-1 \right) \left( m-k \right) t^{m-2}\frac{\partial \left( \xi _I \right)}{\partial \xi _j}\otimes E_{0,i}
\\
&+\left( m-k \right) kt^{m-2}\frac{\partial \left( \xi _I \right)}{\partial \xi _j}\otimes \left( E_{0,0}+E_{j,j} \right) E_{0,i}
\\
&+\left( -1 \right) ^{\left| \frac{\partial \left( \xi _I \right)}{\partial \xi _j} \right|-1}\left( m-k \right) \sum_{s\in \llrr\setminus \left\{ j \right\}}^{}{t^{m-1}\frac{\partial \left( \xi _I \right)}{\partial \xi _s\partial \xi _j}\otimes E_{s,0}E_{0,i}}
\\
&+\left( -1 \right) ^{1+|I|}\left( m-k \right) kt^{m-2}\xi _I\frac{\partial}{\partial \xi _j}\otimes E_{0,i}
\\
&+\left( -1 \right) ^{1+|I|}\left( m-k \right) k\left( k-1 \right) t^{m-3}\xi _I\otimes E_{0,j}E_{0,i}
\\
&+\left( m-k \right) k\sum_{s\in \llrr\setminus \left\{ j \right\}}^{}{t^{m-2}\frac{\partial \left( \xi _I \right)}{\partial \xi _s}\otimes E_{s,j}E_{0,i}}
.
\end{align*}
Then we observe that
\begin{equation}\label{eq4}
  \begin{aligned}
    &-\left( -1 \right) ^{\left| I \right|}\pi \left( t^k\frac{\partial \left( \xi _I \right)}{\partial \xi _j}
\frac{\mathrm{d}}{\mathrm{d}t}+\left( -1 \right) ^{1+|I|}kt^{k-1}\xi _I\frac{\partial}{\partial \xi _j} \right)
\cdot \pi \left( t^{m+3-k}\frac{\partial}{\partial \xi _i} \right)
\\
={}&k^3\left( t^{m}\xi _I\otimes E_{0,j}E_{0,i} \right) +k^2z_2+kz_1+z_0,
  \end{aligned}
 \end{equation}
where $z_0,z_1,z_2\in K_{1,n}\otimes U(\mathfrak{sl}(1,n))$ are independent of $k$.
 Setting $k=0,1,2,3$ in (\ref{eq4}),
we obtain a linear system of equations whose coefficient matrix is nonsingular.
Hence, we have
\[
 t^{m}\xi _I\otimes E_{0,j}E_{0,i} \in \pi(U(\mathcal{S}^\prime(1,n)))
\]
for all $I\subseteq\llrr$, $m\in\mathbb{Z}$ and $i,j\in\llrr$.

Since
$
\pi \left( \frac{\mathrm{d}}{\mathrm{d}t} \right) =\frac{\mathrm{d}}{\mathrm{d}t}\otimes 1\in \pi(U(\mathcal{S}^\prime(1,n)))$
and $
\pi \left( \frac{\partial}{\partial \xi _k} \right) =\frac{\partial}{\partial \xi _k}\otimes 1\in\pi(U(\mathcal{S}^\prime(1,n)))
$
for all $k\in\llrr$,
it follows that $K_{1,n}\otimes E_{0,j}E_{0,i}\subseteq \pi(U(\mathcal{S}^\prime(1,n)))$ for all $i,j\in\llrr$, as desired.
\end{proof}

Using the above lemma, we obtain the following result.

\begin{lemma}\label{l5}
Let $P$ be a simple $K_{1,n}$-module and $V$ be a simple $\mathfrak{sl}(1,n)$-module.
Assume that the $\mathcal{S}^\prime(1,n)$-module $F(P,V)$ is not simple. Then we have $E_{0,j}E_{0,i}V=0$
for all $i,j\in\llrr$.
\end{lemma}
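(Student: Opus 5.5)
We argue by contraposition: assuming $E_{0,j}E_{0,i}V\neq 0$ for some $i,j\in\llrr$, we show that every nonzero $\mathcal{S}^\prime(1,n)$-submodule $W$ of $F(P,V)$ equals $F(P,V)$. The tool is Lemma \ref{l3}, which says that $\pi(U(\mathcal{S}^\prime(1,n)))$ contains $K_{1,n}\otimes E_{0,j}E_{0,i}$. Hence $W$ is stable under every operator $a\otimes E_{0,j}E_{0,i}$ with $a\in K_{1,n}$.

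\textbf{Step 1: find a pure tensor in $W$.} Take a nonzero $w=\sum_{l=1}^r p_l\otimes v_l\in W$, written with $p_1,\dots,p_r$ linearly independent and $r$ minimal among nonzero elements of $W$. Set $u:=E_{0,j}E_{0,i}$, and note that $u$ is even. By minimality, $W$ contains no nonzero element of rank $<r$.

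\textbf{Step 2: show $r=1$ via the Jacobson density theorem.} Since $P$ is a simple $K_{1,n}$-module, the density theorem applies; the endomorphism ring of $P$ is $\C$ because $P$ has countable dimension. We split into two cases.
\begin{itemize}
\item If $uv_l\neq 0$ for some $l$, choose $a\in K_{1,n}$ with $ap_l=p_1$ for that $l$ and $ap_{l'}=0$ for $l'\neq l$. Then $(a\otimes u)w=\pm p_1\otimes uv_l$ is a nonzero pure tensor in $W$.
\item If $uv_l=0$ for all $l$, we must first alter $w$. Because $V$ is simple, $U(\mathfrak{sl}(1,n))v_1=V$. The difficulty is that $W$ is not stable under $1\otimes\mathfrak{sl}(1,n)$. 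We therefore use elements of $\pi(\mathcal{S}^\prime(1,n))$ of the shape $\partial_t$-free terms times $E$'s, obtained by polynomial interpolation in $k$ as in the proof of Lemma \ref{l3}.
\end{itemize}

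Simpler, by density one can take $a$ with $ap_1=p_1$ and $ap_l=0$ for $l\geq2$. Then $(a\otimes u)w=p_1\otimes uv_1$, so the first case suffices once some $uv_l\neq0$. We handle the second case as follows. $W$ is also stable under $\pi(x)$ for all $x$, and the interpolation trick from Lemma \ref{l3} isolates the top coefficient in $k$ of $\pi(t^k\cdots)$ for suitable elements. Since $\pi(t^k\partial_{\xi_i})=t^k\partial_{\xi_i}\otimes1+kt^{k-1}\otimes E_{0,i}$, interpolation gives $K_{1,n}$-multiples of $1\otimes E_{0,i}$ applied to $W$ modulo elements already in $W$. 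In this way, $W$ is stable under $a\otimes E_{0,i}$ composed with operators in $K_{1,n}\otimes1$. A similar argument with the other generators yields stability under $a\otimes X$ for $X$ in $\mathfrak{sl}(1,n)$ modulo lower terms. Applying a suitable $X$ then produces an element with $uv_l\ne0$.

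\textbf{Step 3: generate everything from a pure tensor.} Once $p\otimes v'\in W$ with $v'=uv_1\neq0$, we use $K_{1,n}\otimes u$ together with $K_{1,n}\otimes1\subseteq\pi(U)$. The latter holds since $\partial_t\otimes1$, $\partial_{\xi}\otimes1$, and, by interpolation, $t^m\xi_I\otimes 1$ lie in $\pi(U)$. This gives $P\otimes v'\subseteq W$. Next set $W':=\{v\in V : P\otimes v\subseteq W\}$, which is nonzero. We show $W'$ is an $\mathfrak{sl}(1,n)$-submodule: for $x\in\mathcal{S}^\prime(1,n)$, $\pi(x)(p\otimes v)$ equals $\sum a_\alpha p\otimes X_\alpha v$, and by density and varying $p$ we extract each $P\otimes X v$. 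Simplicity of $V$ then forces $W'=V$, so $W=F(P,V)$.

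\textbf{Main obstacle.} The hard part is the interpolation claim that $K_{1,n}\otimes X\subseteq\pi(U(\mathcal{S}^\prime(1,n)))+(\text{terms already controlled})$ for generators $X$ of $\mathfrak{sl}(1,n)$. The first thing to try is exactly the degree-in-$k$ extraction of Lemma \ref{l3}, applied to $\pi(t^k\cdots)$ composed with $K_{1,n}\otimes 1$, separating the coefficient of each power of $k$.
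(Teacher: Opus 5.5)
\textbf{There is a genuine gap.} Your outline matches the paper's: use Lemma \ref{l3} and density, then the set $W'=\{v\in V: P\otimes v\subseteq W\}$, then simplicity of $V$. But two steps fail as written.

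\textbf{Step 3 rests on a false claim.} You use $K_{1,n}\otimes 1\subseteq\pi(U(\mathcal{S}^\prime(1,n)))$, and this fails. On $F(A_{1,n},\mathbb{C})$, with $\mathbb{C}$ the trivial $\mathfrak{sl}(1,n)$-module, every $\pi(x)$ acts as the derivation $x$, which kills $1$. So $\mathbb{C}1$ is stable under $\pi(U(\mathcal{S}^\prime(1,n)))$ (this is Proposition \ref{p6}(2)), whereas $t\otimes 1$ sends $1$ to $t$. Hence no interpolation can produce $t^m\xi_I\otimes 1$.

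Nor can you spread the pure tensor $p_1\otimes uv_l$ using $K_{1,n}\otimes u$, since $u^2=0$: the $E_{0,i}$ are odd, pairwise anticommuting, and square to zero. The repair is to skip the pure tensor. Apply $a\otimes u$ to $w$ itself with $ap_l=\delta_{l,l_0}p$ for arbitrary $p\in P$. This gives $P\otimes uv_{l_0}\subseteq W$ directly, as the paper does.

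\textbf{Case 2 is not done, and it is the real content of the lemma.} In the case where all $uv_l=0$, you offer only vague, unproven claims about $K_{1,n}\otimes X$ lying in $\pi(U)$ ``modulo lower terms'', and you leave it as the ``main obstacle''. No interpolation is needed here.

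The missing observation is that the following elements of $\mathcal{S}^\prime(1,n)$ satisfy $\pi(x)=x\otimes 1+1\otimes X_x$:
\[
t\tfrac{\mathrm{d}}{\mathrm{d}t}+\xi_l\tfrac{\partial}{\partial\xi_l},\qquad \xi_r\tfrac{\partial}{\partial\xi_s}\ (r\neq s),\qquad t\tfrac{\partial}{\partial\xi_l},\qquad \xi_l\tfrac{\mathrm{d}}{\mathrm{d}t}.
\]
Here $X_x$ runs over the basis $E_{0,0}+E_{l,l}$, $E_{r,s}$, $E_{0,l}$, $E_{l,0}$ of $\mathfrak{sl}(1,n)$.

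With this, take $k$ minimal such that $uX_{x_1}\cdots X_{x_k}v_{l_0}\neq 0$ for some $l_0$ and some $x_1,\dots,x_k$. Such $k$ exists because $U(\mathfrak{sl}(1,n))v_1=V$ and $uV\neq 0$. Expand $\pi(x_1)\cdots\pi(x_k)w$ and apply $a\otimes u$. Every term containing some factor $x_i\otimes 1$ involves $u$ applied to a shorter monomial times some $v_l$, so it vanishes by minimality. What remains is $\sum_l \pm\, ap_l\otimes uX_{x_1}\cdots X_{x_k}v_l$, and density then gives $P\otimes uX_{x_1}\cdots X_{x_k}v_{l_0}\subseteq W$. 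This puts you back in Case 1.

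The same elements also make your $W'$ step rigorous, replacing the vague ``extract each $P\otimes Xv$''. From $P\otimes v\subseteq W$ you get $xp\otimes v\in W$, hence $p\otimes X_xv\in W$ up to sign.

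The paper runs this argument directly rather than by contraposition. It shows $M=\{v:P\otimes v\subseteq F'\}$ is a proper, hence zero, submodule, so $u$ kills every coefficient of every element of $F'$. Applying these $\pi(x)$ repeatedly then gives $uU(\mathfrak{sl}(1,n))v_p=0$, i.e.\ $uV=0$.
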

\begin{proof}
  Let \( F' \) be
a nonzero proper submodule of \( F(P, V) \). Let \( \sum_{p=1}^q u_p \otimes v_p \) be a nonzero homogeneous
vector in \( F' \), where \( u_p \)'s are homogeneous vectors in \( P \) and linearly independent,
and \( v_p \)'s are homogeneous vectors in \( V \).

By Lemma \ref{l3}, we have
\[
  x\otimes E_{0,j}E_{0,i}\in \pi(U(\mathcal{S}^\prime(1,n)))
\]
for all $x\in K_{1,n}$ and $i,j\in\llrr$. Hence, we obtain that
\begin{equation}\label{eq6}
   \left( x\otimes E_{0,j}E_{0,i} \right) \cdot \sum_{p=1}^q{u_p\otimes v_p}=\sum_{p=1}^q xu_p \otimes E_{0,j}E_{0,i}v_p\in F^\prime
\end{equation}
for all $x\in K_{1,n}$ and $i,j\in\llrr$.

Since $P$ is a simple $K_{1,n}$-module, by the Jacobson density theorem, for any $u\in P$
and $p\in\llrr[1][q]$,
there is some $x_p\in K_{1,n}$ such that $x_pu_l=\delta_{l,p}u$ for all $l\in\llrr[1][q]$. By (\ref{eq6}), we have
\begin{equation}\label{eq7}
  P\otimes E_{0,j}E_{0,i}v_p\subseteq F^\prime
\end{equation}
for all $i,j\in\llrr$ and $p\in\llrr[1][q]$.

Let $M=\{v\in V|P\otimes v\subseteq F^\prime\}$. For any $u\in P$, $v\in M$ and $l,r,s\in\llrr$ with $r\ne s$, we have
\begin{align*}
  u\otimes \left( E_{0,0}+E_{l,l} \right) v
={}&\pi \left( t\frac{\mathrm{d}}{\mathrm{d}t}+\xi _l\frac{\partial}{\partial \xi _l} \right) \cdot \left( u\otimes v \right) -\left( t\frac{\mathrm{d}}{\mathrm{d}t}+\xi _l\frac{\partial}{\partial \xi _l} \right) u\otimes v
\in F^\prime,
\\
u\otimes E_{r,s}v={}&\pi \left( \xi _r\frac{\partial}{\partial \xi _s} \right) \cdot \left( u\otimes v \right) -\xi _r\frac{\partial}{\partial \xi _s}u\otimes v
\in F^\prime,
\\
u\otimes E_{0,l}v={}&\left( -1 \right) ^{|u|}\left( \pi \left( t\frac{\partial}{\partial \xi _l} \right) \cdot \left( u\otimes v \right) -t\frac{\partial}{\partial \xi _l}u\otimes v \right)
\in F^\prime,\\
u\otimes E_{l,0}v={}&\left( -1 \right) ^{|u|}\left( \xi _l\frac{\mathrm{d}}{\mathrm{d}t}u\otimes v-\pi \left( \xi _l\frac{\mathrm{d}}{\mathrm{d}t} \right) \cdot \left( u\otimes v \right) \right)
\in F^\prime.
\end{align*}
We see that $M$ is an $\mathfrak{sl}(1,n)$-submodule of $V$. The fact that $P\otimes M\subseteq F^\prime\subsetneq F(P,V)$
implies $M\subsetneq V$, which forces that $M=0$. Hence, from (\ref{eq7}), we have $E_{0,j}E_{0,i}v_p=0$ for all $i,j\in\llrr$ and $p\in\llrr[1][q]$.

Let $l\in\llrr$. We have
\[\pi \left( t\frac{\mathrm{d}}{\mathrm{d}t}+\xi _l\frac{\partial}{\partial \xi _l}
\right) \cdot \left( \sum_{p=1}^q{u_p\otimes v_p} \right) =\sum_{p=1}^q{\left( t\frac{\mathrm{d}}{\mathrm{d}t}+\xi _l\frac{\partial}{\partial \xi _l} \right) u_p\otimes v_p}
+\sum_{p=1}^q{u_p\otimes \left( E_{0,0}+E_{l,l} \right) v_p}\in F^{\prime}.
\]
By Lemma \ref{l3}, we obtain that
\begin{align*}
  &\sum_{p=1}^q{x\left( t\frac{\mathrm{d}}{\mathrm{d}t}+\xi _l\frac{\partial}{\partial \xi _l} \right) u_p\otimes E_{0,j}E_{0,i}v_p}+\sum_{p=1}^q{xu_p\otimes E_{0,j}E_{0,i}\left( E_{0,0}+E_{l,l} \right) v_p}
\\
  ={}&\sum_{p=1}^q{xu_p\otimes E_{0,j}E_{0,i}\left( E_{0,0}+E_{l,l} \right) v_p}\in F^{\prime}
\end{align*}
for all $i,j,l\in\llrr$ and $x\in K_{1,n}$. Using the Jacobson density theorem as before, we deduce that
\[P\otimes E_{0,j}E_{0,i}\left( E_{0,0}+E_{l,l} \right) v_p\subseteq F^{\prime}\]
for all $p\in\llrr[1][q]$ and $i,j,l\in\llrr$.
Similarly, from $\pi \left(\xi _r\frac{\partial}{\partial \xi _s}
\right) \cdot \left( \sum_{p=1}^q{u_p\otimes v_p} \right)$, $\pi \left(t\frac{\partial}{\partial \xi _l}
\right) \cdot \left( \sum_{p=1}^q{u_p\otimes v_p} \right)$ and $\pi \left(\xi _l\frac{\mathrm{d}}{\mathrm{d}t}\right) \cdot \left( \sum_{p=1}^q{u_p\otimes v_p} \right) \in F'$, we have
\[
P\otimes E_{0,j}E_{0,i}E_{r,s}v_p\subseteq F^{\prime}\, ,\,
P\otimes E_{0,j}E_{0,i}E_{0,l}v_p\subseteq F^{\prime}\, ,\,
P\otimes E_{0,j}E_{0,i}E_{l,0}v_p\subseteq F^{\prime}
\]
for all $p\in\llrr[1][q]$ and $l,r,s\in\llrr$ with $r\ne s$.
Hence, from $M=0$, we obtain that
\[E_{0,j}E_{0,i}\left( E_{0,0}+E_{l,l} \right) v_p=E_{0,j}E_{0,i}E_{r,s}v_p=E_{0,j}E_{0,i}E_{0,l}v_p=E_{0,j}E_{0,i}E_{l,0}v_p=0\]
for all $p\in\llrr[1][q]$ and $l,r,s\in\llrr$ with $r\ne s$.
By repeating the above procedure, it follows that
\[
E_{0,j}E_{0,i}U(\mathfrak{sl}(1,n))v_p=0\,,\,\forall i,j\in\llrr,p\in\llrr[1][q].
\]
Since $V$ is a simple $\mathfrak{sl}(1,n)$-module,
we have
\[E_{0,j}E_{0,i}U(\mathfrak{sl}(1,n))v_p=E_{0,j}E_{0,i}V=0\,,\,\forall i,j\in\llrr,\]
which completes the proof.
\end{proof}

 Note that, there is a Lie algebra isomorphism $\sigma:\mathfrak{gl}(n)\rightarrow\mathfrak{sl}(1,n)_{0}$
given by
\begin{equation}\label{eq11}
A   \mapsto \left(\begin{smallmatrix}
  \mathrm{tr}\left( A \right)&		0\\
	0&		A\\
 \end{smallmatrix}\right)\,\,,\,\,\forall A\in \mathfrak{gl}(n).
\end{equation} Hence, any $\mathfrak{gl}(n)$-module $V$ can be regarded as an $\mathfrak{sl}(1,n)_{0}$-module by $x v=\sigma (x) v$ for any $x\in \mathfrak{gl}(n)$ and $v\in V$. We also need the following lemma.

\begin{lemma}\label{l6}
 Let $V_0$ be a simple weight $\mathfrak{sl}(1,n)_0$-module. Assume that $E_{0,j}E_{0,i}L(V_0)=0$ for all $i,j\in\llrr$, then
 $V_0\cong N(\lambda)$ for some $\lambda\in \mathbb{C}^n$.
\end{lemma}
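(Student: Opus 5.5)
The plan is to reduce the statement to Lemma \ref{l4}. I will view $V_0$ as a $\mathfrak{gl}(n)$-module via the isomorphism $\sigma$ in (\ref{eq11}), and extract the quadratic relations $\left(E_{i,i}+E_{i,i}E_{j,j}-E_{i,j}E_{j,i}\right)V_0=0$ from the hypothesis $E_{0,j}E_{0,i}L(V_0)=0$. This is done by pushing the operators $E_{k,0}\in\mathfrak{sl}(1,n)_{-1}$ through the products $E_{0,j}E_{0,i}$.

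\textbf{Step 1: $V_0$ sits in $L(V_0)$ as a subspace killed by $\mathfrak{g}_{-1}$.} The Kac module $K(V_0)=U(\mathfrak{g}_1)\otimes V_0$ is $\mathbb{Z}$-graded with $1\otimes V_0$ in top degree, and $1\otimes V_0$ generates $K(V_0)$. Any submodule meeting $1\otimes V_0$ nontrivially contains it, by simplicity of $V_0$ over $\mathfrak{g}_0$, and hence is everything. So the unique maximal submodule meets $1\otimes V_0$ trivially, and $V_0\hookrightarrow L(V_0)$. On the image we have $E_{k,0}v=0$ for all $k\in\llrr$ and all $v\in V_0$.

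\textbf{Step 2: commutator computation.} For $v\in V_0$ we have $E_{0,j}E_{0,i}v=0$. Apply $E_{k,0}$ and then $E_{l,0}$, moving them to the right using
\[
E_{k,0}E_{0,j}=-E_{0,j}E_{k,0}+E_{k,j}+\delta_{k,j}E_{0,0},
\]
together with $E_{k,0}v=0$. The result is a purely $\mathfrak{g}_0$ expression: a quadratic polynomial in the $E_{a,b}$ and $E_{0,0}$ applied to $v$, which must vanish. I will choose the indices $l=j$ and $k=i$ with $i\neq j$. In that case the double commutator should reduce, up to sign, to
\[
\left(E_{0,0}+E_{i,i}\right)\left(E_{0,0}+E_{j,j}\right)-E_{i,j}E_{j,i}+c\left(E_{0,0}+E_{i,i}\right)
\]
applied to $v$, for some constant $c$. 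The expected outcome is $c=\pm1$, giving $\sigma(E_{i,i})+\sigma(E_{i,i})\sigma(E_{j,j})-\sigma(E_{i,j})\sigma(E_{j,i})$ up to normalisation. Here I use $\sigma(E_{i,i})=E_{0,0}+E_{i,i}$ and $\sigma(E_{i,j})=E_{i,j}$ for $i\neq j$.

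\textbf{Step 3: applying Lemma \ref{l4}.} The main obstacle is Step 2: tracking the super signs and the $\delta$-terms carefully so that the linear term comes out exactly as $\sigma(E_{i,i})$, rather than, e.g., $\sigma(E_{j,j})$ or with the wrong sign. If it comes out with the roles of $i,j$ swapped, I will simply interchange $i$ and $j$, or use the other ordering $l=i,k=j$. I may also combine both orderings, or use $E_{k,0}$ with $k=j$ first, to isolate the desired combination. Once the relation holds for all $i\neq j$, $V_0$ is a simple weight $\mathfrak{gl}(n)$-module via $\sigma$: weight vectors for the Cartan of $\mathfrak{sl}(1,n)_0$ are weight vectors for $\sigma^{-1}$ of it. It also satisfies the hypothesis of Lemma \ref{l4}, so $V_0\cong N(\lambda)$ for some $\lambda\in\mathbb{C}^n$.
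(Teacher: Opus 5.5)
Your proposal is correct and is essentially the paper's proof. The paper computes the double commutator $[E_{i,0},[E_{j,0},E_{0,j}E_{0,i}]]$ inside $\mathrm{Ann}(L(V_0))$ and evaluates it on $V_0$ using $\mathfrak{g}_{-1}V_0=0$, which is your Step 2 with the order of $E_{i,0},E_{j,0}$ reversed. Your ordering gives $\bigl((E_{0,0}+E_{j,j})(E_{0,0}+E_{i,i}+1)-E_{j,i}E_{i,j}\bigr)v=0$, which equals your predicted expression with $c=1$ since $E_{i,j}E_{j,i}-E_{j,i}E_{i,j}=E_{i,i}-E_{j,j}$, so no swap of $i,j$ is needed; your Step 1 also makes explicit the embedding $V_0\hookrightarrow L(V_0)$ that the paper uses tacitly.
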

\begin{proof}
 Set $\mathrm{Ann}(L(V_0)):=\{x\in U(\mathfrak{sl}(1,n))|xL(V_0)=0\}$. Then $\mathrm{Ann}(L(V_0))$ is an
 ideal of $U(\mathfrak{sl}(1,n))$ and $E_{0,j}E_{0,i}\in\mathrm{Ann}(L(V_0))$ for all $i,j\in\llrr$.

 We have
 \begin{align*}
 \left[ E_{j,0},E_{0,j}E_{0,i} \right]
={}&\left[ E_{j,0},E_{0,j} \right] E_{0,i}-E_{0,j}\left[ E_{j,0},E_{0,i} \right]
\\
={}&\left( E_{j,j}+E_{0,0} \right) E_{0,i}-E_{0,j}E_{j,i}\in \mathrm{Ann}(L(V_0))
 \end{align*}
 for all $i,j\in\llrr$ with $i\ne j$. Then
 \begin{align*}
 &\left[ E_{i,0},\left( E_{j,j}+E_{0,0} \right) E_{0,i}-E_{0,j}E_{j,i} \right] \\
={}&\left[ E_{i,0},\left( E_{j,j}+E_{0,0} \right) E_{0,i} \right] -\left[ E_{i,0},E_{0,j}E_{j,i} \right]
\\
={}&\left[ E_{i,0},\left( E_{j,j}+E_{0,0} \right) \right] E_{0,i}+\left( E_{j,j}+E_{0,0} \right) \left[ E_{i,0},E_{0,i} \right]
\\
&-\left( \left[ E_{i,0},E_{0,j} \right] E_{j,i}-E_{0,j}\left[ E_{i,0},E_{j,i} \right] \right)
\\
={}& E_{i,0}E_{0,i}-E_{0,j}E_{j,0}+\left( E_{j,j}+E_{0,0} \right) \left( E_{i,i}+E_{0,0} \right) -E_{i,j}E_{j,i}
\\
={}& -E_{0,i}E_{i,0}-E_{0,j}E_{j,0}+\left( E_{i,i}+E_{0,0} \right)
\\
&+\left( E_{i,i}+E_{0,0} \right)\left( E_{j,j}+E_{0,0} \right)-E_{i,j}E_{j,i}
\in \mathrm{Ann}(L(V_0))
 \end{align*}
for all $i,j\in\llrr$ with $i\ne j$. Since $E_{i,0} V_0=E_{j,0}V_0=0$ in $L(V_0)$,
it follows that \[\left(\left( E_{i,i}+E_{0,0} \right)
+\left( E_{i,i}+E_{0,0} \right)\left( E_{j,j}+E_{0,0} \right)-E_{i,j}E_{j,i}\right)V_0=0,\]
i.e., $\sigma(\left(E_{i,i}+E_{i,i}E_{j,j}-E_{i,j}E_{j,i}\right))V_0=0$ for all $i,j\in\llrr$ with $i\ne j$. Then from Lemma \ref{l4}, there exists some $\lambda\in \mathbb{C}^n$ such that $V_0\cong N(\lambda)$ as $\mathfrak{sl}(1,n)_0$-modules.
\end{proof}

We now derive the following conclusion.

\begin{theorem}\label{t1}
  Let $P$ be a simple $K_{1,n}$-module and $V$ be a simple weight $\mathfrak{sl}(1,n)$-module.
  Assume that the $\mathcal{S}^\prime(1,n)$-module $F(P,V)$ is not simple, then $V\cong L(N(\lambda))$
  for some $\lambda\in\mathbb{C}^n$ up to a parity-change.
\end{theorem}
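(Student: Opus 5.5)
The plan is to combine Lemma \ref{l5}, Lemma \ref{l1} and Lemma \ref{l6}. Since $F(P,V)$ is not simple, Lemma \ref{l5} gives $E_{0,j}E_{0,i}V=0$ for all $i,j\in\llrr$. By Lemma \ref{l1}, up to a parity-change $V\cong L(V_0)$ for some simple $\mathfrak{sl}(1,n)_0$-module $V_0$. Parity-change does not affect the vanishing of $E_{0,j}E_{0,i}$, so we may assume $V=L(V_0)$ with $E_{0,j}E_{0,i}L(V_0)=0$.

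To apply Lemma \ref{l6}, we must check that $V_0$ is a weight module. Take the Cartan subalgebra $\mathfrak h$ of $\mathfrak{sl}(1,n)$ spanned by the diagonal elements $E_{0,0}+E_{i,i}$; it lies in $\mathfrak{sl}(1,n)_0$. Since $V$ is a weight module, $\mathfrak h$ acts diagonalizably on $V$.

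Realize $V_0$ inside $V$ as the image of $1\otimes V_0\subseteq K(V_0)$ in the simple top $L(V_0)$. This image is nonzero: otherwise $1\otimes V_0$ would lie in the maximal submodule, but $1\otimes V_0$ generates $K(V_0)$. It is isomorphic to $V_0$ because $V_0$ is simple. It is also exactly the subspace of $V$ annihilated by $\mathfrak{g}_{-1}$ and lying in top $\Z$-degree. A cleaner description is as the eigenspace in $V$ of the grading element, namely the element of $\mathfrak{gl}(1,n)$ acting by $\mathrm{ad}$-degree. That element is central in $\mathfrak{g}_0$ but need not lie in $\mathfrak{sl}(1,n)$. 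To avoid this issue I would argue directly: $V_0\cong\mathfrak{g}_{0}$-submodule $\{v\in V\mid \mathfrak{g}_{-1}v=0\}$ modulo nothing, and $\mathfrak h$-stable subspaces of a module on which $\mathfrak h$ acts diagonalizably inherit a weight decomposition. Hence $V_0$ is a simple weight $\mathfrak{sl}(1,n)_0$-module.

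Lemma \ref{l6} then gives $V_0\cong N(\lambda)$, so $V\cong L(N(\lambda))$ up to parity-change.

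The main obstacle is this weight-module verification: identifying $V_0$ with an $\mathfrak h$-stable subspace of $V$. This requires that the image of $1\otimes V_0$ in $L(V_0)$ is nonzero and is preserved by $\mathfrak{g}_0$. Both follow from the PBW decomposition $K(V_0)=\Lambda(\mathfrak{g}_1)\otimes V_0$ and the fact that the maximal submodule is $\Z$-graded and so meets $1\otimes V_0$ trivially.
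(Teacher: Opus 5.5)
Your proposal is correct and follows essentially the same route as the paper. Lemma \ref{l1} gives $V\cong L(V_0)$ up to parity, Lemma \ref{l5} gives $E_{0,j}E_{0,i}L(V_0)=0$, and Lemma \ref{l6} then yields $V_0\cong N(\lambda)$. Your extra check that $V_0$ is a weight module, by viewing it as the $\mathfrak{g}_0$-stable (hence $\mathfrak h$-stable) image of $1\otimes V_0$ in $V$, fills in a point the paper simply asserts. The detour through the grading element is unnecessary, and in any case for $n\geq 2$ that element can be taken inside $\mathfrak{sl}(1,n)$, because $\mathfrak{gl}(1,n)=\mathfrak{sl}(1,n)\oplus\C I$.
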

\begin{proof}
From Lemma \ref{l1}, $V\cong L(V_0)$ for some simple weight $\mathfrak{sl}(1,n)_0$-module $V_0$ up to a parity-change.
  By Lemma \ref{l5}, we have \[E_{0,j}E_{0,i}L(V_0)=0\] for all $i,j\in\llrr$, and then the conclusion follows from
  Lemma \ref{l6}.
\end{proof}

\subsection{Simple subquotients of \texorpdfstring{$F(P,L(N(\lambda)))$}{F(P, L(N(λ)))}}

Let $P$ be a simple $K_{1,n}$-module and $V$ be a simple weight $\mathfrak{sl}(1,n)$-module.
By Theorem \ref{t1}, the $\mathcal{S}^\prime(1,n)$-module $F(P,V)$ is simple if $V\ncong L(N(\lambda))$ for all $\lambda\in\mathbb{C}^n $ up to a parity-change.
Therefore, in this subsection, we will study the structure of the $\mathcal{S}^\prime(1,n)$-module $F(P,L(N(\lambda)))$
for all $\lambda\in\mathbb{C}^n $.

We first give a concrete realization of the $\mathfrak{sl}(1,n)$-module $L(N(\lambda))$ for $\lambda\in\mathbb{C}^n $.
The Lie superalgebras $\mathfrak{gl}(1,n)$ as well as $\mathfrak{sl}(1,n)$ can be embedded into $K^+_{n,1}$ as  subalgebras
via an injective homomorphism $\iota:\mathfrak{gl}(1,n)\hookrightarrow K^+_{n,1}$ given by
\[
E_{0,0}\rightarrow \xi \frac{\partial}{\partial \xi}
\,\,,\,\,
E_{i,j}\rightarrow t_i\frac{\partial}{\partial t_j}
\,\,,\,\,
E_{0,l}\rightarrow \xi \frac{\partial}{\partial t_l}
\,\,,\,\,
E_{l,0}\rightarrow t_l\frac{\partial}{\partial \xi}
\,\,,\forall i,j,l\in\llrr.
\]
The homomorphism $\iota$ induces a superalgebra homomorphism from $U(\mathfrak{gl}(1,n))$ to $K^+_{n,1}$, which we still denote by $\iota$. Then any $K^+_{n,1}$-module can be regarded as a $\mathfrak{gl}(1,n)$-module via $\iota$.

For a given $\lambda\in \mathbb{C}^n$, consider the following subspace of the simple weight $K^+_{n,1}$-module $A_{n,1}^+(\lambda)$:
\[N_{n,1}\left( \lambda \right) :=\left\{ v\in A_{n,1}^{+}\left( \lambda \right) \,\middle|\,
 \left( \xi \frac{\partial}{\partial \xi}+\sum_{s=1}^n{t_s\frac{\partial}{\partial t_s}} \right) v=|\lambda |v \right\} .\]
According to \cite{CLX,XW},
$N_{n,1}(\lambda)$ is a simple weight module over both $\mathfrak{gl}(1,n)$ and $\mathfrak{sl}(1,n)$.
And $\left( N_{n,1}\left( \lambda \right)\right)_{\bar{0}}=N(\lambda)$.  Note that there exists some $\lambda^\prime\in\Gamma(\lambda)$ with $|\lambda^\prime|=|\lambda|-1$ if $\lambda\ne \mathbf{0}$. We have
\begin{align}\label{eqofN}
 N_{n,1}\left( \lambda \right)=\begin{cases}
  \,\,\C ,&\text{if}\quad \lambda =\mathbf{0},\\
  \,\,N(\lambda)\oplus N(\lambda')\xi ,& \text{if}\quad \lambda\neq \mathbf{0},
\end{cases}
\end{align}
where $\lambda'\in \Gamma(\lambda)$ with $|\lambda'|=|\lambda|-1$. Clearly, $N_{n,1}(\lambda)\cong L(N(\lambda))$.

  Let $P$ be a $K_{1,n}$-module and $P^\prime$ be a $K_{n,1}^+$-module. Then $P'$ is regarded as $\mathfrak{gl}(1,n)$-module via $\iota$ 
  and we have the Shen-Larsson $\mathcal{W}(1,n)$-module $F(P,P')$.
Define
\[\mathrm{diff}:=
\frac{\mathrm{d}}{\mathrm{d}t}\otimes \xi -\sum_{s=1}^n{\frac{\partial}{\partial \xi _s}\otimes t_s}\in K_{1,n}\otimes K_{n,1}^{+}.\]
According to \cite{CLX}, $[\mathrm{diff},\pi(x)]=0$ for all $x\in\mathcal{W}(1,n)$, and hence
$\mathrm{diff}$ is an endomorphism of the $\mathcal{W}(1,n)$-module $F(P,P^\prime)$.

\begin{remark}\label{r2}
  For each $\lambda\in\mathbb{C}^n$, all the modules \[F(P, N_{n,1}(\lambda^\prime))\,\,,\,\,\mathrm{diff}(F(P,N_{n,1}(\lambda^\prime)))
  \,\,,\,\,\forall \lambda^\prime\in\Gamma(\lambda),\]
  are $\mathcal{S}^\prime(1,n)$-submodules of $F(P,A_{n,1}^+(\lambda))$.

Set $\mathbf{-1}:=(-1,-1,\cdots,-1)\in \C^n$.
If $\lambda\ne \mathbf{-1}$,
there exists some $\lambda^\prime\in \Gamma(\lambda)$ with $|\lambda^\prime|=|\lambda|+1$. Then
\[\mathrm{diff}(F(P,N_{n,1}(\lambda)))\subseteq F(P,N_{n,1}(\lambda^\prime)).\]
If $\lambda = \mathbf{-1}$, there does not exist such $\lambda^\prime\in \Gamma(\lambda)$. However,
$$\mathrm{diff}(F(P,N_{n,1}(\mathbf{-1})))\subseteq P\otimes t_1^{-1}t_2^{-1}\cdots t_n^{-1} \xi\subseteq F(P,A^+_{n,1}(\mathbf{-1})).$$
 Note that $\C t_1^{-1}t_2^{-1}\cdots t_n^{-1} \xi$ is a trivial $\mathfrak{sl}(1,n)$-submodule of $A^+_{n,1}(\mathbf{-1})$. Therefore, $\mathrm{diff}(F(P,N_{n,1}(\mathbf{-1})))$ is isomorphic to a $\mathcal{S}^\prime(1,n)$-submodule of $F(P,N_{n,1}(\mathbf{0}))$ up to a parity-change.

Conversely, if $\lambda\ne \mathbf{0}$, there exists some $\lambda^\prime\in\Gamma(\lambda)$ with $|\lambda^\prime|=|\lambda|-1$. Then
\[\mathrm{diff}(F(P,N_{n,1}(\lambda^\prime)))\subseteq F(P,N_{n,1}(\lambda)).\]
\end{remark}

A module $M$ over an algebra $A$ is called trivial if $M\ne 0$ and $A M=0$.

\begin{remark}\label{r3}
For $\lambda\in \C^n$, define \[
\tilde{F}(P,\lambda):=
\left\{ v\in F\left( P,N_{n,1}\left( \lambda \right) \right) \,\middle|\, \mathrm{diff}\left( v \right) =0 \right\} ,
\]
where $P$ is a $K_{1,n}$-module.

Since $\mathrm{diff}\,^2=0$, for any $\lambda\in \C^n$ with $\lambda\ne \mathbf{0}$,
we have \[\mathrm{diff}(F(P,N_{n,1}(\lambda^{\prime})))\subseteq \tilde{F}(P,\lambda)\]
and
the quotient
\[ \tilde{F}(P,\lambda) /\mathrm{diff}(F(P,N_{n,1}(\lambda'))) \]
is either zero or a trivial $\mathcal{S}'(1,n)$-module, where $\lambda^{\prime}\in \Gamma(\lambda)$ with $|\lambda^\prime|=|\lambda|-1$.
\end{remark}

For $i\in\llrr$,
let $e_i\in \C^n$ be the vector whose
$i$-th entry is $1$ and all other entries are $0$.
We have the following result.

\begin{proposition}\label{p3}
  Let $P$ be a simple $K_{1,n}$-module and $\lambda\in \mathbb{C}^n$ with $\lambda\ne \mathbf{0},e_1,e_2,\cdots,e_n$.
  Then for any nonzero $\mathcal{S}^\prime(1,n)$-submodule  $V$ of $F(P,N_{n,1}(\lambda))$, we have
  \[0\ne P_1\otimes \left(N_{n,1}(\lambda)\right)_{\bar{1}}\subseteq V,\]
  where $P_1:=\left\{ p\in P \,\middle|\, \frac{\partial}{\partial \xi _i}p=0 , \forall i\in\llrr \right\} $
  is a simple $K_{1,0}$-module and $\left(N_{n,1}(\lambda)\right)_{\bar{1}}$ is a simple $\mathfrak{sl}(1,n)_{0}$-module.
  Thus, $F(P,N_{n,1}(\lambda))$ has a unique simple submodule.
\end{proposition}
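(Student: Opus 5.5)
Given a nonzero submodule $V$, I will first extract a pure tensor. The tool is the same density argument as in Lemma \ref{l5}, but now applied to elements of $\pi(U(\mathcal{S}^\prime(1,n)))$ of the form $x\otimes y$ with $y$ acting nontrivially on $N_{n,1}(\lambda)$.

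\textbf{Step 1: a pure tensor in $V$.} Take a nonzero homogeneous $w=\sum_{p=1}^q u_p\otimes v_p\in V$ with the $u_p$ linearly independent. Since $N_{n,1}(\lambda)$ is a weight module, I may assume every $v_p$ lies in one weight space, by applying $\pi(\xi_l\partial_{\xi_l}+t\frac{d}{dt})$ and $\pi(\xi_r\partial_{\xi_s})$ and using density as in Lemma \ref{l5}.

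By the realization via $\iota$, odd vectors have the form $f\xi$ with $f\in N(\lambda')$, and even vectors lie in $N(\lambda)$. Then $E_{0,l}E_{0,m}=\xi\partial_{t_l}\xi\partial_{t_m}=0$ on $N_{n,1}(\lambda)$. So Lemma \ref{l3} gives nothing here, and I need other elements.

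Mimicking the proof of Lemma \ref{l3}, I will take products such as $\pi(t^k\xi_I\ldots)\pi(t^{m-k}\partial_{\xi_i})$ and extract the top $k$-coefficient by a Vandermonde argument. This should show that $\pi(U(\mathcal{S}'(1,n)))$ contains elements of the form
\[
K_{1,n}\otimes\bigl(E_{s,0}E_{0,i}-\delta_{s,i}\cdot(\text{Cartan})\bigr)
\quad\text{and}\quad
K_{1,n}\otimes (E_{0,0}+E_{j,j})E_{0,i}
\]
modulo lower terms. Here I expect it is cleanest to use the $k^2$ coefficient of \eqref{eq4} once the $k^3$ term vanishes on the module.

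Composing such operators with density on $P$ yields $P\otimes y v_p\subseteq V$ for suitable $y\in U(\mathfrak{sl}(1,n))$ with $yv_p\neq0$. The resulting $M=\{v\mid P\otimes v\subseteq V\}$ is then a nonzero subspace, stable under $\mathfrak{sl}(1,n)_0$ as in Lemma \ref{l5}.

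\textbf{Step 2: from $M\neq 0$ to the odd part.} The computation in Lemma \ref{l5} shows that $M$ is an $\mathfrak{sl}(1,n)_0$-submodule, but not necessarily $\mathfrak{sl}(1,n)$-stable, since the odd generators mix with $K_{1,n}$. Instead I will show that $M\cap(N_{n,1}(\lambda))_{\bar1}\neq0$ by applying $E_{0,l}$, which maps $N(\lambda)$ to $N(\lambda')\xi$. Then $M\supseteq(N_{n,1}(\lambda))_{\bar 1}$ by simplicity of $N(\lambda')$ as an $\mathfrak{sl}(1,n)_0$-module via $\sigma$. This gives $P\otimes (N_{n,1}(\lambda))_{\bar1}\subseteq V$, and a fortiori $P_1\otimes(N_{n,1}(\lambda))_{\bar1}\subseteq V$.

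If the full statement $P\otimes(\cdot)$ fails, I would fall back on the weaker claim for $P_1$. There the elements $\frac{\partial}{\partial\xi_i}\otimes 1$ kill $P_1$, which isolates the terms with $E_{s,0}$ and $E_{0,i}$. Simplicity of $P_1$ as a $K_{1,0}$-module comes from Theorem \ref{t2}(4), and $P_1\neq0$ holds as well.

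\textbf{Main obstacle: where the excluded $\lambda$ enter.} The hypothesis $\lambda\neq\mathbf0,e_i$ must enter in showing that the extracted operator acts nonzero on some $v_p$. For $\lambda=e_i$, the space $N(\lambda')$ is trivial ($\lambda'=0$), so $\mathfrak{sl}(1,n)_0$ acts by scalars on the odd part and the argument degenerates. This is consistent with diff producing the extra submodules of Remark \ref{r3}.

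\textbf{Uniqueness.} Finally, every nonzero submodule contains the fixed nonzero subspace $P_1\otimes(N_{n,1}(\lambda))_{\bar1}$. Hence the submodule it generates is contained in every nonzero submodule, and it is therefore the unique simple submodule.
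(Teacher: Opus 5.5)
There is a genuine gap. Your main route tries to prove $P\otimes(N_{n,1}(\lambda))_{\bar 1}\subseteq V$ for every nonzero submodule $V$, and that statement is false.

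\textbf{A counterexample.} Take $V=\tilde{F}(P,\lambda)$. This is a nonzero submodule; it contains $\mathrm{diff}(F(P,N_{n,1}(\lambda')))$ because $\mathrm{diff}^2=0$. Write $P=P_1\otimes\Lambda(n)$ and take $0\neq p\in P_1$ and $g\xi\in N(\lambda')\xi$. Since $\xi g\xi=0$ and $\frac{\partial}{\partial\xi_s}(\xi_1p)=\delta_{s,1}p$, we get $\mathrm{diff}(\xi_1p\otimes g\xi)=-p\otimes t_1g\xi$. When the entries of $\lambda$ are non-integral, multiplication by $t_1$ is injective, so this is nonzero and $\xi_1 p\otimes g\xi\notin V$. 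Only the $P_1$-version of the statement can hold.

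\textbf{Why your Step 1 tool cannot exist.} You hope for elements acting on $F(P,N_{n,1}(\lambda))$ as $K_{1,n}\otimes y$, with $y$ such as $(E_{0,0}+E_{j,j})E_{0,i}$ acting nonzero on $N_{n,1}(\lambda)$. Contrary to your Step 2, the set $M=\{v\mid P\otimes v\subseteq V\}$ from Lemma \ref{l5} is $\mathfrak{sl}(1,n)$-stable, not merely $\mathfrak{sl}(1,n)_0$-stable. Indeed $\pi(t\frac{\partial}{\partial\xi_l})=t\frac{\partial}{\partial\xi_l}\otimes 1+1\otimes E_{0,l}$ and $\pi(\xi_l\frac{d}{dt})=\xi_l\frac{d}{dt}\otimes 1+1\otimes E_{l,0}$, and their $K_{1,n}$-parts map $P$ into $P$.

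So the proof of Lemma \ref{l5} applies verbatim to any such $y$: if $F(P,N)$ is not simple, then $yN=0$. Here $F(P,N_{n,1}(\lambda))$ is not simple, since $\tilde F(P,\lambda)$ is a proper nonzero submodule. It is proper because $\mathrm{diff}(p\otimes f)=\pm\frac{d}{dt}p\otimes\xi f\neq 0$ for suitable $p\in P_1$ and $f\in N(\lambda)$. Hence every such $y$ annihilates $N_{n,1}(\lambda)$, but $(E_{0,0}+E_{j,j})E_{0,i}$ does not. Consistently, the $k^2$-coefficient of \eqref{eq4} is the element $a^\prime(m,I,i,j)$ of Lemma \ref{l7}. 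It is a sum of tensors with different $K_{1,n}$-factors, not a pure tensor.

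Your reduction to a single $N$-weight space is also unjustified. The operator $\pi(\kappa_l)$ contains $t\frac{d}{dt}\otimes 1$, which need not be diagonalizable on $P$.

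\textbf{What the proof actually needs.} The $P_1$ fallback you mention in one sentence is the whole proof, and its mechanism is missing from your proposal:
\begin{enumerate}
\item Apply $\pi(\frac{\partial}{\partial\xi_i})=\frac{\partial}{\partial\xi_i}\otimes 1$ to arrange that all $u_p\in P_1$.
\item On $P_1\otimes N_{n,1}(\lambda)$, $\pi(t\frac{\partial}{\partial\xi_i})$ acts, up to sign, as $1\otimes E_{0,i}$. This moves every $v_p$ into $N(\lambda')\xi$.
\item On $P_1\otimes N(\lambda')\xi$, $\pi(t^k\xi_l\frac{\partial}{\partial\xi_j})$ with $l\neq j$ acts as $t^k\otimes E_{l,j}$, because $E_{0,j}$ kills the odd part. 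Also $\pi(\kappa)-b$ acts as $nt\frac{d}{dt}\otimes 1$. Together these give $\sum_p zu_p\otimes E_{l,j}v_p\in V$ for all $z\in K_{1,0}$.
\item Density over the simple $K_{1,0}$-module $P_1$ (not over $K_{1,n}$) then yields $P_1\otimes E_{l,j}v_1\subseteq V$.
\item Here $\lambda\neq e_i$ enters, as you correctly diagnosed: it forces $\lambda'\neq\mathbf{0}$, so some $E_{l_0,j_0}v_1\neq 0$.
\item Finally, $\{w\in N(\lambda')\xi\mid P_1\otimes w\subseteq V\}$ is $\mathfrak{sl}(1,n)_0$-stable, using $\pi(\xi_r\frac{\partial}{\partial\xi_s})$ and $\pi(\kappa_r)$, whose $K$-parts preserve $P_1$. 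Simplicity of $N(\lambda')\xi$ finishes the argument.
\end{enumerate}
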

\begin{proof}
  Let \( \sum_{p=1}^q u_p \otimes v_p \) be a nonzero homogeneous
vector in \( V \), where \( \{u_p|p\in\llrr[1][q]\} \) are homogeneous vectors in \( P \) and linearly independent,
and \( \{v_p|p\in\llrr[1][q]\} \) are homogeneous vectors in \( N_{n,1}(\lambda) \).

For $i\in\llrr$, we have
\[\pi \left( \frac{\partial}{\partial \xi _i} \right) \cdot \sum_{p=1}^q{u_p}\otimes v_p=\sum_{p=1}^q{\frac{\partial}{\partial \xi _i}u_p}\otimes v_p
\in V.\]
By Theorem \ref{t2} (4), without loss of generality, we may assume that
\[
\frac{\partial}{\partial \xi_i} u_p = 0\,\,, \,\,\forall \, i \in \llrr\,\,, \,\, p \in \llrr[1][q].
\]
Namely, each $u_p\in P_1$.

For $i\in\llrr$, we have
\begin{align*}
  \pi \left( t\frac{\partial}{\partial \xi _i} \right) \cdot \sum_{p=1}^q{u_p}\otimes v_p={}&\sum_{p=1}^q{t\frac{\partial}{\partial \xi _i}u_p}\otimes v_p+\sum_{p=1}^q{u_p}\otimes E_{0,i}v_p
\\
={}&\sum_{p=1}^q{u_p}\otimes E_{0,i}v_p\in V
.
\end{align*}
Thus, we obtain that
\[\sum_{p=1}^q{u_p}\otimes xv_p\subseteq V, ~\forall x\in U\left( \mathfrak{s} \mathfrak{l} \left( 1,n \right) _1 \right) .\]
 From \eqref{eqofN} and the simplicity of $N_{n,1}(\lambda)$, we
know that, there exists some $x\in U\left( \mathfrak{s} \mathfrak{l} \left( 1,n \right) _1 \right)$
such that $xv_p\in N(\lambda')\xi$ for all $1\leq p\leq q$ and $xv_p$ is nonzero for some $p$.
Thus, replace $v_p$ with $xv_p$ if necessary, we assume that each $0\neq v_p\in N(\lambda')\xi $.

For any \( k \in \mathbb{Z} \) and $l,j\in\llrr$ with $l\ne j$, we have
\begin{align*}
  \pi\left(t^k\xi _l\frac{\partial}{\partial \xi _j}\right)\cdot \left( \sum_{p=1}^q{u_p}\otimes v_p \right) ={}&\sum_{p=1}^q{\left( t^k\xi _l\frac{\partial}{\partial \xi _j}u_p\otimes v_p+kt^{k-1}\xi _lu_p\otimes E_{0,j}v_p+t^ku_p\otimes E_{l,j}v_p \right)}
\\
={}&\sum_{p=1}^q{t^ku_p}\otimes E_{l,j}v_p\in V.
\end{align*}

For any $i\in\llrr$, the element $\kappa_i:=t\frac{\mathrm{d}}{\mathrm{d}t}+ \xi _i\frac{\partial}{\partial \xi _i}\in
\mathcal{S}^\prime(1,n)$ and
we have
\[\pi \left( \kappa _i \right) =\pi \left( t\frac{\mathrm{d}}{\mathrm{d}t}+\xi _i\frac{\partial}{\partial \xi _i}
\right) =t\frac{\mathrm{d}}{\mathrm{d}t}\otimes 1+1\otimes (E_{0,0}+E_{i,i})+ \xi _i\frac{\partial}{\partial \xi _i}
\otimes 1.\]
Set $\kappa:=\kappa_1+\kappa_2+\cdots+\kappa_n$. Then
\[\pi(\kappa)=nt\frac{\mathrm{d}}{\mathrm{d}t}\otimes 1
+ \sum_{i=1}^{n}\xi _i\frac{\partial}{\partial \xi _i}\otimes 1+1\otimes \left(nE_{0,0}+\sum_{i=1}^{n}E_{i,i} \right).\]
Note that $nE_{0,0}+\sum_{i=1}^{n}E_{i,i}$ acts on $N(\lambda')\xi$ by a scalar $b=|\lambda|+n-1$.
Thus, we obtain
\[\pi(\kappa) \cdot \left( \sum_{p=1}^q{t^ku_p\otimes E_{l,j}v_p} \right)- b\sum_{p=1}^q{t^ku_p\otimes E_{l,j}v_p}
=n\sum_{p=1}^q(t\frac{\mathrm{d}}{\mathrm{d}t}) t^{k}u_p\otimes E_{l,j}v_p\in V.\]
It follows that
\[\sum_{p=1}^q{zu_p\otimes E_{l,j}v_p}\in V\]
for all $z\in K_{1,0}=\C[t^{\pm 1},t\frac{\mathrm{d}}{\mathrm{d}t}]$ and $l,j\in\llrr$ with $l\ne j$.
Since $P_1$ is a simple $K_{1,0}$-module, by
the Jacobson density theorem, for any $u\in P_1$, there exists an element $y\in K_{1,0}$ such that
$yu_p=\delta_{1,p}u$ for all $p\in\llrr$. Therefore,
\begin{equation}\label{eqofP_1}
P_1\otimes E_{l,j}v_1\subseteq V,  ~ ~ \forall l,j\in\llrr~ \textrm{with}~ l\ne j.
\end{equation}

Let $M=\{w\in N(\lambda')\xi~|~ P_1\otimes w\subseteq V\}$. Since $\lambda\ne e_1,e_2,\cdots,e_n$, it follows that $\lambda'\neq \mathbf{0}$ and
there exist some $l_0,j_0\in\llrr$ with $l_0\ne j_0$ such that $0\neq E_{l_0,j_0}v_1=t_{l_0}\frac{\partial}{\partial t_{j_0}}v_1\in N(\lambda')\xi$.
Then from \eqref{eqofP_1}, we have $E_{l_0,j_0}v_1\in M$ and $M\neq 0$. For any $r,s\in\llrr$ with $r\ne s$, $u\in P_1$ and $w\in M$, we have
\[ \pi\left(\xi _r\frac{\partial}{\partial \xi _s}\right)\cdot \left( u\otimes w \right) =
u\otimes E_{r,s}w\in V\]
and
\[  \pi\left( t\frac{\mathrm{d}}{\mathrm{d}t}+\xi _r\frac{\partial}{\partial \xi _r} \right) 
  \cdot \left( u\otimes w \right) =t\frac{\mathrm{d}}{\mathrm{d}t}u\otimes w+u\otimes \left( E_{0,0}+E_{r,r} \right) w\in V\]
which implies $E_{r,s}w, \left( E_{0,0}+E_{r,r} \right) w\in M$. So $M$ is a nonzero submodule of the simple $\mathfrak{sl}(1,n)_{0}$-module $N(\lambda')\xi$. Thus, $M=N(\lambda')\xi$ and $P_1\otimes (N_{n,1}(\lambda))_{\bar{1}}=P_1\otimes N(\lambda')\xi\subseteq V$. Then the intersection of all nonzero submodules of $F(P, N_{n,1}(\lambda))$ is nonzero and hence it is the unique simple submodule of $F(P, N_{n,1}(\lambda))$.
\end{proof}

\begin{lemma}\label{l7}
  For all $m\in\mathbb{Z}$, $I\subseteq \llrr$ and $i,j\in\llrr$, we have
  $a(m,I,i,j)\in \pi(\mathcal{S}^\prime(1,n))$, where
  \begin{align*}
 a(m,I,i,j) := {}& \left( -1 \right) ^{|I|}t^{m}\xi _I\frac{\partial}{\partial \xi _i}\otimes E_{0,j}+t^{m}\frac{\partial}{\partial \xi _j}\xi _I\otimes \left( E_{0,i}-E_{0,0}E_{0,i} \right)+\left( -1 \right) ^{\left| I \right|}t^{m}\xi _I\frac{\partial}{\partial \xi _j}\otimes E_{0,0}E_{0,i}
\\
&-\sum_{s\in \llrr}^{}{\frac{\partial}{\partial \xi _s} t^{m}\xi _I\otimes E_{s,j}E_{0,i}}+\left( -1 \right) ^{\left| I \right|}\sum_{s\in \llrr}^{}{t^{m}\xi _I\frac{\partial}{\partial \xi _s}\otimes E_{s,j}E_{0,i}}
.
\end{align*}
\end{lemma}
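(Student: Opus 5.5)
The expression $a(m,I,i,j)$ contains quadratic terms such as $E_{0,0}E_{0,i}$ and $E_{s,j}E_{0,i}$, so I read the statement as membership in $\pi(U(\mathcal{S}^\prime(1,n)))$. I would prove it by the polynomial-interpolation trick of Lemma \ref{l3}.

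Fix $m$, $I$, $i$, $j$. Take the element
\[x_k:=t^k\frac{\partial(\xi_{I'})}{\partial\xi_j}\frac{\mathrm{d}}{\mathrm{d}t}+(-1)^{1+|I'|}kt^{k-1}\xi_{I'}\frac{\partial}{\partial\xi_j}\in\mathcal{S}^\prime(1,n),\]
with $I'$ chosen so that $\frac{\partial(\xi_{I'})}{\partial\xi_j}=\pm\xi_I$. That is, $I'=I\cup\{j\}$ if $j\notin I$; if $j\in I$, I would treat the case separately, which is why the statement mixes $\xi_I\frac{\partial}{\partial\xi_j}$ and $\frac{\partial}{\partial\xi_j}\xi_I$. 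Form the product $\pi(x_k)\,\pi\bigl(t^{m+c-k}\frac{\partial}{\partial\xi_i}\bigr)$ for a suitable shift $c\in\{1,2\}$, chosen so that the relevant coefficient lands on $t^m$.

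The expansion computed in the proof of Lemma \ref{l3} shows that this product is a polynomial in $k$ of degree at most $3$:
\[\pi(x_k)\,\pi\Bigl(t^{m+c-k}\frac{\partial}{\partial\xi_i}\Bigr)=k^3z_3+k^2z_2+kz_1+z_0,\]
where $z_0,\dots,z_3\in K_{1,n}\otimes U(\mathfrak{sl}(1,n))$ do not depend on $k$. Evaluating at $k=0,1,2,3$ gives a Vandermonde system, so each $z_r$ lies in $\pi(U(\mathcal{S}^\prime(1,n)))$.

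Next I would compute $z_2$, or $z_1$, explicitly. The terms in it come from three sources in the Lemma \ref{l3} expansion:
\begin{itemize}
\item the terms carrying $(m-k)k$, namely $(E_{0,0}+E_{j,j})E_{0,i}$ and $\sum_{s\ne j}E_{s,j}E_{0,i}$;
\item the terms $kt^{k-1}\xi_I\frac{\partial}{\partial\xi_j}\otimes E_{0,i}$;
\item the term $-k(k-1)\,\xi_I\frac{\partial}{\partial\xi_i}\otimes E_{0,j}$.
\end{itemize}
I would then strip off pieces already known to lie in $\pi(U(\mathcal{S}^\prime(1,n)))$:
\begin{itemize}
\item anything in $K_{1,n}\otimes E_{0,j}E_{0,i}$, by Lemma \ref{l3};
\item anything in $K_{1,n}\otimes 1$, using that $\pi(U(\mathcal{S}^\prime(1,n)))$ contains $\frac{\mathrm{d}}{\mathrm{d}t}\otimes1$ and $\frac{\partial}{\partial\xi_s}\otimes1$ and is closed under multiplication;
\item multiples of $\pi$ of single elements, for instance $\pi\bigl(t^m\xi_I\frac{\partial}{\partial\xi_i}\bigr)$-type elements of $\mathcal{S}^\prime(1,n)$, right-multiplied by $\frac{\mathrm{d}}{\mathrm{d}t}\otimes 1$ or $\frac{\partial}{\partial\xi_s}\otimes 1$.
\end{itemize}

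To reach the precise shape of $a(m,I,i,j)$, I would rewrite the left multiplications $t^{m-1}\frac{\partial(\xi_I)}{\partial\xi_s}\cdot(\cdot)$ in the expansion using the supercommutator in $K_{1,n}$: $\frac{\partial}{\partial\xi_s}\xi_I$ versus $(-1)^{|I|}\xi_I\frac{\partial}{\partial\xi_s}$, and similarly for $\frac{\mathrm{d}}{\mathrm{d}t}$. This step should turn the missing $s=j$ summand into exactly the combination $E_{0,i}-E_{0,0}E_{0,i}$ displayed in $a(m,I,i,j)$. It uses $[E_{j,j},E_{0,i}]$-type relations together with the fact that the $E_{j,j}E_{0,i}$ term combines with the $E_{0,0}E_{0,i}$ term.

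The main obstacle I expect is this bookkeeping: choosing the correct shift $c$ and the correct coefficient $z_r$, and then matching every sign $(-1)^{|I|}$ and every reordering in $K_{1,n}$ so that the residue equals $a(m,I,i,j)$ exactly. The cases $i=j$ and $j\in I$ need separate attention, because some summands in Lemma \ref{l3}'s formula vanish there. If the direct identification of $z_r$ is messy, I would first verify the statement for $I=\varnothing$ and then pass to general $I$ by left multiplication by $\xi_I\otimes1$. This reduction is not automatic, since $\xi_I\otimes1$ is not itself in the image, so it would need its own justification.
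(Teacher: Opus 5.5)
Your route is the paper's: interpolate in $k$, isolate a coefficient by Vandermonde, strip an $E_{0,j}E_{0,i}$ piece with Lemma \ref{l3}, and rewrite in $K_{1,n}$. You are also right to read the conclusion as membership in $\pi(U(\mathcal{S}^\prime(1,n)))$. But the element you feed in is the wrong one.

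If $x_k$ is built from $I'$ with $\frac{\partial(\xi_{I'})}{\partial\xi_j}=\pm\xi_I$, then the term $(-1)^{1+|I'|}k(k-1)t^{k-2}\xi_{I'}\otimes E_{0,j}$ of $\pi(x_k)$, multiplied by $\pi(t^{m+2-k}\frac{\partial}{\partial\xi_i})$, contributes $(-1)^{|I'|}t^m\xi_{I'}\frac{\partial}{\partial\xi_i}\otimes E_{0,j}$ to the $k^2$-coefficient. The whole computation then outputs $a(m,I',i,j)$, not $a(m,I,i,j)$; their $K_{1,n}$-components have $\xi$-degree $|I|$ versus $|I|-1$. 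Your list of ``sources'' silently writes $\xi_I$ where your $x_k$ actually gives $\xi_{I'}$. Carried out correctly, your scheme only reaches sets containing $j$, and the targets with $j\notin I$ are never produced.

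The fix is to use the same $I$ in both slots:
\[
x_k=t^k\frac{\partial(\xi_I)}{\partial\xi_j}\frac{\mathrm{d}}{\mathrm{d}t}+(-1)^{1+|I|}kt^{k-1}\xi_I\frac{\partial}{\partial\xi_j}\in\mathcal{S}^\prime(1,n).
\]
This lies in $\mathcal{S}^\prime(1,n)$ for every $I$; for $j\notin I$ it is just $(-1)^{1+|I|}kt^{k-1}\xi_I\frac{\partial}{\partial\xi_j}$. So no case split on $j\in I$ or on $i=j$ is needed. The mixture of $\xi_I\frac{\partial}{\partial\xi_j}$ and $\frac{\partial}{\partial\xi_j}\xi_I$ in $a(m,I,i,j)$ comes from the identity $\frac{\partial(\xi_I)}{\partial\xi_s}=\frac{\partial}{\partial\xi_s}\xi_I-(-1)^{|I|}\xi_I\frac{\partial}{\partial\xi_s}$ in $K_{1,n}$, not from a case distinction.

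Second, your stripping toolkit contains a false item: $\pi(U(\mathcal{S}^\prime(1,n)))$ does not contain $K_{1,n}\otimes 1$. On $F(A_{1,n},\C)$, with $\C$ the trivial $\mathfrak{sl}(1,n)$-module, every element of $\pi(U(\mathcal{S}^\prime(1,n)))$ preserves the line $\C(1\otimes 1)$, whereas $t\otimes 1$ does not. This is why your hedge ``$z_2$, or $z_1$'' matters. The $k^1$-coefficient contains terms in $K_{1,n}\otimes 1$ and terms in $K_{1,n}\otimes E_{s,0}E_{0,i}$, and none of your valid tools removes them.

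With the correct $x_k$ and shift $c=2$, take the $k^2$-coefficient, which is exactly
\begin{align*}
&(-1)^{|I|}t^m\xi_I\frac{\partial}{\partial\xi_i}\otimes E_{0,j}+t^m\frac{\partial(\xi_I)}{\partial\xi_j}\otimes\bigl(E_{0,i}-(E_{0,0}+E_{j,j})E_{0,i}\bigr)+(-1)^{|I|}t^m\xi_I\frac{\partial}{\partial\xi_j}\otimes E_{0,i}\\
&-\sum_{s\ne j}t^m\frac{\partial(\xi_I)}{\partial\xi_s}\otimes E_{s,j}E_{0,i}-(-1)^{|I|}(m+3)t^{m-1}\xi_I\otimes E_{0,j}E_{0,i}.
\end{align*}
Only the last term needs stripping, and Lemma \ref{l3} handles it.

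Note where the pieces of $a(m,I,i,j)$ come from:
\begin{itemize}
\item The $E_{0,i}$ in $E_{0,i}-E_{0,0}E_{0,i}$ comes from the term $(m+1-k)(m+2-k)t^m\frac{\partial(\xi_I)}{\partial\xi_j}\otimes E_{0,i}$, which your list omits. It does not come from any $[E_{j,j},E_{0,i}]$ relation.
\item After substituting the $K_{1,n}$ identity above, the $E_{j,j}E_{0,i}$ part of $(E_{0,0}+E_{j,j})E_{0,i}$ simply supplies the missing $s=j$ summand of both full sums.
\item The two $(-1)^{|I|}t^m\xi_I\frac{\partial}{\partial\xi_j}\otimes E_{0,i}$ terms cancel.
\end{itemize}
With this, your fallback reduction to $I=\varnothing$ via $\xi_I\otimes 1$ is unnecessary.
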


\begin{proof}
  Let $m,k\in\mathbb{Z}$, $I\subseteq\llrr$ and $i,j\in\llrr$.
  Recall the calculations in the proof of Lemma \ref{l3}.
We observe that
  \begin{equation}\label{eq8}
    \begin{aligned}
    &\pi \left( t^k\frac{\partial \left( \xi _I \right)}{\partial \xi _j}\frac{\mathrm{d}}{\mathrm{d}t}+\left( -1 \right) ^{1+|I|}kt^{k-1}\xi _I\frac{\partial}{\partial \xi _j} \right) \cdot \pi \left( t^{m+2-k}\frac{\partial}{\partial \xi _i} \right)
\\
={}&k^3z_3+k^2a^{\prime}\left( m,I,i,j \right) +kz_1+k_0,
  \end{aligned}
  \end{equation}
  where $z_3,a^{\prime}\left( m,I,i,j \right),z_1,z_0\in K_{1,n}\otimes U(\mathfrak{sl}(1,n))$ are independent of $k$ and
  \begin{align*}
  a^{\prime}\left( m,I,i,j \right)={}&\left( -1 \right) ^{|I|}t^{m}\xi _I\frac{\partial}{\partial \xi _i}\otimes E_{0,j}
+t^{m}\frac{\partial \left( \xi _I \right)}{\partial \xi _j}\otimes E_{0,i}
\\
&-t^{m}\frac{\partial \left( \xi _I \right)}{\partial \xi _j}\otimes \left( E_{0,0}+E_{j,j} \right) E_{0,i}+\left( -1 \right) ^{|I|}t^{m}\xi _I\frac{\partial}{\partial \xi _j}\otimes E_{0,i}
\\
&-\left( -1 \right) ^{|I|}\left( 3+m \right) t^{m-1}\xi _I\otimes E_{0,j}E_{0,i}
-\sum_{s\in \llrr\setminus \left\{ j \right\}}^{}{t^{m}\frac{\partial \left( \xi _I \right)}{\partial \xi _s}\otimes E_{s,j}E_{0,i}}
.
  \end{align*}
 Setting $k=0,1,2,3$ in (\ref{eq8}),
we obtain a linear system of equations whose coefficient matrix is nonsingular.
Hence, we have
\[
 a^{\prime}\left( m,I,i,j \right) \in \pi(U(\mathcal{S}^\prime(1,n)))
\]
for all $I\subseteq\llrr$, $m\in\mathbb{Z}$ and $i,j\in\llrr$.

Furthermore, by Lemma \ref{l3}, we know that
\[
 a^{\prime}\left( m,I,i,j \right)+\left( -1 \right) ^{|I|}\left( 3+m \right) t^{m-1}\xi _I\otimes E_{0,j}E_{0,i}
  \in \pi(U(\mathcal{S}^\prime(1,n)))
\]
for all $I\subseteq\llrr$, $m\in\mathbb{Z}$ and $i,j\in\llrr$. Then we have
\begin{align*}
  & a^{\prime}\left( m,I,i,j \right) +\left( -1 \right) ^{|I|}\left( 3+m \right) t^{m-1}\xi _I\otimes E_{0,j}E_{0,i}
\\
={}&\left( -1 \right) ^{|I|}t^m\xi _I\frac{\partial}{\partial \xi _i}\otimes E_{0,j}
+t^m\frac{\partial \left( \xi _I \right)}{\partial \xi _j}\otimes E_{0,i}
\\
&-t^m\frac{\partial \left( \xi _I \right)}{\partial \xi _j}\otimes \left( E_{0,0}+E_{j,j} \right) E_{0,i}
\\
&+\left( -1 \right) ^{|I|}t^m\xi _I\frac{\partial}{\partial \xi _j}\otimes E_{0,i}
\\
&-\sum_{s\in \llrr\setminus \left\{ j \right\}}^{}{t^m\frac{\partial \left( \xi _I \right)}{\partial \xi _s}\otimes E_{s,j}E_{0,i}}
\\
={}&\left( -1 \right) ^{|I|}t^m\xi _I\frac{\partial}{\partial \xi _i}\otimes E_{0,j}+\left( -1 \right) ^{|I|}t^m\xi _I\frac{\partial}{\partial \xi _j}\otimes E_{0,i}
\\
&+t^m\left( \frac{\partial}{\partial \xi _j}\xi _I-\left( -1 \right) ^{|I|}\xi _I\frac{\partial}{\partial \xi _j} \right) \otimes E_{0,i}
\\
&-t^m\left( \frac{\partial}{\partial \xi _j}\xi _I-\left( -1 \right) ^{|I|}\xi _I\frac{\partial}{\partial \xi _j} \right) \otimes \left( E_{0,0}+E_{j,j} \right) E_{0,i}
\\
&-\sum_{s\in \llrr\setminus \left\{ j \right\}}^{}{t^m\left( \frac{\partial}{\partial \xi _s}\xi _I-\left( -1 \right) ^{|I|}\xi _I\frac{\partial}{\partial \xi _s} \right) \otimes E_{s,j}E_{0,i}}
\\
={}&\left( -1 \right) ^{|I|}t^m\xi _I\frac{\partial}{\partial \xi _j}\otimes E_{0,0}E_{0,i}
\\
&+\frac{\partial}{\partial \xi _j}t^m\xi _I\otimes \left( E_{0,i}-E_{0,0}E_{0,i} \right) +\left( -1 \right) ^{|I|}t^m\xi _I\frac{\partial}{\partial \xi _i}\otimes E_{0,j}
\\
&-\sum_{s\in \llrr}^{}{\frac{\partial}{\partial \xi _s}t^m\xi _I\otimes E_{s,j}E_{0,i}}+\left( -1 \right) ^{|I|}\sum_{s\in \llrr}^{}{t^m\xi _I\frac{\partial}{\partial \xi _s}\otimes E_{s,j}E_{0,i}}
\\
={}& a\left( m,I,i,j \right)\in\pi(\mathcal{S}^\prime(1,n))
\end{align*}
for all $I\subseteq\llrr$, $m\in\mathbb{Z}$ and $i,j\in\llrr$.
\end{proof}

Using the above lemma, we prove the following result.

\begin{proposition}\label{p4}
  Let $P$ be a simple $K_{1,n}$-module and $\lambda\in\mathbb{C}^n$ with $\lambda\ne \mathbf{0}, \mathbf{-1}$.
  Then
  $\mathrm{diff}(F(P,N_{n,1}(\lambda)))$ is a simple $\mathcal{S}^\prime(1,n)$-submodule of $F(P,N_{n,1}(\lambda^\prime))$ for some
  $\lambda^\prime\in\Gamma(\lambda)$ with $|\lambda^\prime|=|\lambda|+1$.
\end{proposition}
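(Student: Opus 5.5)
The plan is to identify $\mathrm{diff}(F(P,N_{n,1}(\lambda)))$ with the unique simple submodule of $F(P,N_{n,1}(\lambda'))$ given by Proposition \ref{p3}.

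\textbf{Setting up.} Since $\lambda\neq\mathbf{-1}$, Remark \ref{r2} gives some $\lambda'\in\Gamma(\lambda)$ with $|\lambda'|=|\lambda|+1$, and then $D:=\mathrm{diff}(F(P,N_{n,1}(\lambda)))$ is an $\mathcal{S}'(1,n)$-submodule of $F(P,N_{n,1}(\lambda'))$. I check that $\lambda'$ satisfies the hypotheses of Proposition \ref{p3}. Membership $\lambda'\in\Gamma(\lambda)$ forces each coordinate of $\lambda'$ to lie in the same class ($\mathbb{Z}_+$, $-\mathbb{Z}_{>0}$ or $\mathbb{C}\setminus\mathbb{Z}$) as the corresponding coordinate of $\lambda$.
\begin{itemize}
\item If $\lambda'=\mathbf{0}$, then $\lambda\in\mathbb{Z}_+^n$ with $|\lambda|=-1$, which is impossible.
\item If $\lambda'=e_i$, then $\lambda\in\mathbb{Z}_+^n$ with $|\lambda|=0$, so $\lambda=\mathbf{0}$, which is excluded.
\end{itemize}
Hence Proposition \ref{p3} applies to $\lambda'$. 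It gives a unique simple submodule $S\subseteq F(P,N_{n,1}(\lambda'))$, and $S$ contains $P_1\otimes(N_{n,1}(\lambda'))_{\bar 1}$.

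\textbf{Reduction.} It suffices to prove two things: $D\neq 0$, and $D\subseteq S$. Indeed, $D\neq0$ implies $S\subseteq D$, since $S$ lies in every nonzero submodule. Then $D\subseteq S$ gives $D=S$, which is simple.

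Nonvanishing is direct. Take $u\in P_1$ and an odd vector $v\xi\in N(\lambda'')\xi\subseteq N_{n,1}(\lambda)$. The term $-\sum_s\frac{\partial}{\partial\xi_s}u\otimes t_sv\xi$ vanishes, so $\mathrm{diff}(u\otimes v\xi)=\pm\frac{\mathrm{d}}{\mathrm{d}t}u\otimes v$. Choosing $u$ with $\frac{\mathrm{d}}{\mathrm{d}t}u\neq0$, or else using $\xi_s u$ so that the $t_s$-term survives, this is nonzero.

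\textbf{Main step: $D\subseteq S$.} Since $\mathrm{diff}$ commutes with $\pi(\mathcal{S}'(1,n))$, I intend to exhibit a generating set for $D$ inside $S$. I would use the elements $a(m,I,i,j)\in\pi(\mathcal{S}'(1,n))$ of Lemma \ref{l7}, together with the operators $x\otimes E_{0,j}E_{0,i}$ of Lemma \ref{l3}.

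Realise $E_{0,i}=\xi\frac{\partial}{\partial t_i}$ and $E_{s,j}=t_s\frac{\partial}{\partial t_j}$ inside $K^+_{n,1}$. The combination
\[
-\sum_s\frac{\partial}{\partial\xi_s}t^m\xi_I\otimes E_{s,j}E_{0,i}+(-1)^{|I|}\sum_s t^m\xi_I\frac{\partial}{\partial\xi_s}\otimes E_{s,j}E_{0,i},
\]
together with the $E_{0,0}E_{0,i}$ terms, should be rewritten as $\mathrm{diff}\circ\bigl(t^m\xi_I\otimes\frac{\partial}{\partial t_j}E_{0,i}\bigr)$ plus terms involving $\frac{\partial}{\partial t_j}\frac{\partial}{\partial t_i}$. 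I expect the following identity on $P\otimes A^+_{n,1}$: $a(m,I,i,j)$ equals $\mathrm{diff}\circ Y+Y'\circ\mathrm{diff}$ for explicit operators $Y,Y'\in K_{1,n}\otimes K^+_{n,1}$ of appropriate weight.

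Granting this, $a(m,I,i,j)$ applied to a vector $u\otimes w\in P_1\otimes(N_{n,1}(\lambda'))_{\bar1}\subseteq S$ lands in $S$. I also expect that $\mathrm{diff}(u\otimes w)=0$ for such $u\otimes w$, at least when $w$ is odd; then the image is $\mathrm{diff}(Y(u\otimes w))$. Varying $m,I,i,j$, then applying $K_{1,n}\otimes E_{0,j}E_{0,i}$ and the Jacobson density theorem as in Lemma \ref{l5}, should show that these vectors $\mathrm{diff}(Y(u\otimes w))$ generate $\mathrm{diff}(P\otimes N_{n,1}(\lambda))$ as an $\mathcal{S}'(1,n)$-module. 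That gives $D\subseteq S$.

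\textbf{Expected obstacle.} The main difficulty is this explicit operator identity: expressing $a(m,I,i,j)$ through $\mathrm{diff}$, and checking that the $Y(u\otimes w)$ exhaust $N_{n,1}(\lambda)$ modulo $\ker\mathrm{diff}$. Showing they exhaust it should use the simplicity of $N(\lambda)$ as a $\mathfrak{gl}(n)$-module, together with the hypothesis $\lambda\neq\mathbf{0}$, which guarantees that $\frac{\partial}{\partial t_j}$ is not identically zero on $N(\lambda)$.
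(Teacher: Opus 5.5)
\textbf{There is a genuine gap in your main step.} Several parts of your proposal match the paper's proof:
\begin{itemize}
\item the choice of $\lambda'$;
\item the check that $\lambda'\neq\mathbf{0},e_1,\dots,e_n$;
\item the reduction to showing that any submodule containing $P_1\otimes(N_{n,1}(\lambda'))_{\bar 1}$ contains $D$ (your $D\subseteq S$).
\end{itemize}
However, the main step is only conjectured, and the mechanism you sketch produces nothing.

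Every term of $a(m,I,i,j)$ has $\mathfrak{sl}(1,n)$-factor $E_{0,j}$, $E_{0,i}-E_{0,0}E_{0,i}$, $E_{0,0}E_{0,i}$ or $E_{s,j}E_{0,i}$. In each case the rightmost factor is $E_{0,i}$ or $E_{0,j}$, that is, $\xi\frac{\partial}{\partial t_i}$ or $\xi\frac{\partial}{\partial t_j}$. Since $\xi^2=0$, this annihilates $(N_{n,1}(\lambda'))_{\bar1}=N(\cdot)\xi$. Hence $a(m,I,i,j)$ kills all of $P\otimes(N_{n,1}(\lambda'))_{\bar1}$. Applying it to $u\otimes w\in P_1\otimes(N_{n,1}(\lambda'))_{\bar1}$ therefore gives $0$, not generators of $D$. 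Any identity $a=\mathrm{diff}\circ Y+Y'\circ\mathrm{diff}$ would then only yield $\mathrm{diff}(Y(u\otimes w))=0$.

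The operators $x\otimes E_{0,j}E_{0,i}$ of Lemma \ref{l3} are equally useless here. They act by zero on every $F(P,P')$ with $P'$ a $K^+_{n,1}$-module, because $\iota(E_{0,j}E_{0,i})=\xi\frac{\partial}{\partial t_j}\xi\frac{\partial}{\partial t_i}=0$.

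\textbf{The missing idea is to leave the odd part first.} Take $w=\sum_k p_k\otimes v_k$ with $p_k\in P_1$ and $v_k$ odd. Apply $\pi(\xi_r\frac{\mathrm{d}}{\mathrm{d}t})=\xi_r\frac{\mathrm{d}}{\mathrm{d}t}\otimes 1+1\otimes E_{r,0}$, where $E_{r,0}=t_r\frac{\partial}{\partial\xi}$. Now apply $a(m,I,i,j)$:
\begin{itemize}
\item It kills the summand $\sum_k\xi_r\frac{\mathrm{d}}{\mathrm{d}t}p_k\otimes v_k$, whose second factor is odd.
\item On $\sum_k(-1)^{|p_k|}p_k\otimes t_r\frac{\partial}{\partial\xi}v_k$, the terms containing $t^m\xi_I\frac{\partial}{\partial\xi_s}$ vanish because $p_k\in P_1$.
\item $E_{0,i}-E_{0,0}E_{0,i}$ kills even vectors.
\end{itemize}
Only $-\sum_s\frac{\partial}{\partial\xi_s}t^m\xi_I\otimes E_{s,j}E_{0,i}$ survives, giving
\[
-\Bigl(\sum_s\frac{\partial}{\partial\xi_s}\otimes t_s\Bigr)\sum_k t^m\xi_Ip_k\otimes\frac{\partial}{\partial t_j}\frac{\partial}{\partial t_i}t_rv_k=\mathrm{diff}\Bigl(\sum_k t^m\xi_Ip_k\otimes\frac{\partial}{\partial t_j}\frac{\partial}{\partial t_i}t_rv_k\Bigr).
\]
The equality holds because the second factor is odd, so $\frac{\mathrm{d}}{\mathrm{d}t}\otimes\xi$ kills it. No general operator identity is needed.

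From here the argument finishes as follows:
\begin{enumerate}
\item Use $\frac{\mathrm{d}}{\mathrm{d}t}\otimes 1$ and $\frac{\partial}{\partial\xi_s}\otimes 1$, which commute with $\mathrm{diff}$, to replace $t^m\xi_I$ by an arbitrary $x\in K_{1,n}$.
\item Use Jacobson density to isolate one nonzero $\zeta=\frac{\partial}{\partial t_{j_0}}\frac{\partial}{\partial t_{i_0}}t_{r_0}v_{k_0}$. It is nonzero precisely because $\lambda'\neq\mathbf{0},e_i$.
\item Observe that $\{v\in N_{n,1}(\lambda)\mid \mathrm{diff}(P\otimes v)\subseteq V\}$ is a nonzero $\mathfrak{sl}(1,n)$-submodule of the simple module $N_{n,1}(\lambda)$, hence all of it.
\end{enumerate}
Step 3 disposes of your concern about exhausting $N_{n,1}(\lambda)$ modulo $\ker\mathrm{diff}$.

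\textbf{A smaller error in your nonvanishing argument.} For $u\in P_1$ and odd $v\xi$, $\mathrm{diff}(u\otimes v\xi)=0$, not $\pm\frac{\mathrm{d}}{\mathrm{d}t}u\otimes v$. The $\xi$ in $\mathrm{diff}$ is multiplication, and $\xi\cdot v\xi=0$. Nonvanishing of $D$ is still easy: use an even $v$, for which $\mathrm{diff}(u\otimes v)=\pm\frac{\mathrm{d}}{\mathrm{d}t}u\otimes\xi v$.
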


\begin{proof}
  Assume that $V\subseteq \mathrm{diff}(F(P,N_{n,1}(\lambda)))$ is a nonzero
  $\mathcal{S}^\prime(1,n)$-submodule of $\mathrm{diff}(F(P,N_{n,1}(\lambda)))$.
  We need to show that $V=\mathrm{diff}(F(P,N_{n,1}(\lambda)))$.

  Since $\lambda\ne \mathbf{-1} $, there exists some
  $\lambda^\prime\in\Gamma(\lambda)$ with $|\lambda^\prime|=|\lambda|+1$ such that
  \[\mathrm{diff}(F(P,N_{n,1}(\lambda)))\subseteq F(P,N_{n,1}(\lambda^\prime)).\]
By the condition, we see $\lambda^\prime\ne \mathbf{0},e_1,e_2,\cdots,e_n$. Thus, we know from Proposition \ref{p3} that
\[0\ne P_1\otimes (N_{n,1}(\lambda^\prime))_{\bar{1}}\subseteq V,\]
where
$P_1:=\left\{ p\in P \,\middle|\, \frac{\partial}{\partial \xi _i}u=0\,\,,\,\,
\forall i\in\llrr
 \right\}$.
Hence, there exists $0\neq w:=\sum_{k=1}^{\ell} p_k\otimes v_k\in V$ such that
 $\left\{p_k\right\}_{k=1}^\ell$ are nonzero homogeneous vectors in \( P_1\setminus\{0\} \) and linearly independent and
 \( \left\{v_k\right\}_{k=1}^\ell \) are nonzero homogeneous vectors in $(N_{n,1}(\lambda^\prime))_{\bar{1}}$.

For $r\in\llrr$, we have
\[\pi \left( \xi _r\frac{\mathrm{d}}{\mathrm{d}t} \right) \cdot
\sum_{k\in \llrr[1][\ell]}^{}{p_k\otimes v_k}=\sum_{k\in \llrr[1][\ell]}^{}{\xi _r\frac{\mathrm{d}}{\mathrm{d}t}p_k\otimes v_k}
+\sum_{k\in \llrr[1][\ell]}{\left( -1 \right) ^{|p_k|}p_k\otimes t_r\frac{\partial}{\partial \xi}v_k}\in V
.
\]
Then,
by Lemma \ref{l7}, we have
  \begin{align*}
 &a\left( m,I,i,j \right) \cdot \left( \sum_{k\in \llrr[1][\ell]}^{}{\xi _r\frac{\mathrm{d}}{\mathrm{d}t}p_k\otimes v_k}
 +\sum_{k\in \llrr[1][\ell]}{\left( -1 \right) ^{|p_k|}p_k\otimes t_r\frac{\partial}{\partial \xi}v_k} \right)
\\
={}& a\left( m,I,i,j \right) \cdot \sum_{k\in \llrr[1][\ell]}{\left( -1 \right) ^{|p_k|}p_k\otimes t_r\frac{\partial}{\partial \xi}v_k}
\\
={}&-\sum_{s\in \llrr}^{}{\sum_{k\in \llrr[1][\ell]}{\frac{\partial}{\partial \xi _s}t^m\xi _Ip_k\otimes t_s\frac{\partial}{\partial t_j}\frac{\partial}{\partial t_i}t_r\xi \frac{\partial}{\partial \xi}v_k}}
\\
={}&-\sum_{s\in \llrr}^{}{\sum_{k\in \llrr[1][\ell]}{\frac{\partial}{\partial \xi _s}t^m\xi _Ip_k\otimes t_s\frac{\partial}{\partial t_j}\frac{\partial}{\partial t_i}t_rv_k}}
\\
={}&-\left( \sum_{s\in \llrr}^{}{\frac{\partial}{\partial \xi _s}\otimes t_s} \right) \cdot \sum_{k\in \llrr[1][\ell]}{t^m\xi _Ip_k\otimes \frac{\partial}{\partial t_j}\frac{\partial}{\partial t_i}t_rv_k}
\in  V
\end{align*}
for all $m\in\mathbb{Z}$, $I\subseteq \llrr$ and $i,j,r\in\llrr$.
Therefore, we obtain
\begin{align*}
 & \mathrm{diff}\left( \sum_{k\in \llrr[1][\ell]}{t^m\xi _Ip_k\otimes \frac{\partial}{\partial t_j}\frac{\partial}{\partial t_i}t_rv_k} \right)
\\
={}&\left( \frac{\mathrm{d}}{\mathrm{d}t}\otimes \xi \right) \cdot \sum_{k\in \llrr[1][\ell]}{t^m\xi _Ip_k\otimes
\frac{\partial}{\partial t_j}\frac{\partial}{\partial t_i}t_rv_k}
-\left( \sum_{s\in \llrr}^{}{\frac{\partial}{\partial \xi _s}\otimes t_s}
\right) \cdot\sum_{k\in \llrr[1][\ell]}{t^m\xi _Ip_k\otimes \frac{\partial}{\partial t_j}\frac{\partial}{\partial t_i}t_rv_k}
\\
={}&-\left( \sum_{s\in\llrr}^{}{\frac{\partial}{\partial \xi _s}\otimes t_s} \right)\cdot \sum_{k\in \llrr[1][\ell]}{t^m\xi _Ip_k\otimes \frac{\partial}{\partial t_j}\frac{\partial}{\partial t_i}t_rv_k}\in V
\end{align*}
for all $m\in\mathbb{Z}$, $I\subseteq \llrr$ and $i,j,r\in\llrr$.
Furthermore, since $[\mathrm{diff},\pi(x)]=0$ for all $ x\in\mathcal{S}^\prime(1,n)$ and
\[\pi \left( \frac{\mathrm{d}}{\mathrm{d}t} \right) =\frac{\mathrm{d}}{\mathrm{d}t}\otimes 1\in \pi(U(\mathcal{S}^\prime(1,n)))
\,\,,\,\,
\pi \left( \frac{\partial}{\partial \xi _s} \right) =\frac{\partial}{\partial \xi _s}\otimes 1\in\pi(U(\mathcal{S}^\prime(1,n)))
\,,\,\forall s\in\llrr,
\]
we derive that
\begin{equation}\label{eq9}
  \mathrm{diff}\left( \sum_{k\in \llrr[1][\ell]}^{}{x p_k\otimes \frac{\partial}{\partial t_j}\frac{\partial}{\partial t_i}t_rv_k} \right)
\in V
\end{equation}
for all $x\in K_{1,n}$ and $i,j,r\in\llrr$.

Since $\lambda^\prime \ne \mathbf{0},e_1,e_2,\cdots,e_n$ and each $v_k\in N^\prime(\lambda^\prime)$, there exists some $k_0\in\llrr[1][\ell]$
and $i_0,j_0,r_0\in \llrr$ such that
\[\zeta:
=\frac{\partial}{\partial t_{j_0}}\frac{\partial}{\partial t_{i_0}}t_{r_0}v_{k_0}\in N_{n,1}(\lambda)\setminus \{0\}.\]
By the Jacobson density
theorem, for any $p\in P$, there exists some $x_p\in K_{1,n}$ such that
\[x_p p_{k_0}=p\quad\mathrm{and}\quad x_p p_{k}=0\,\,,\,\,\forall k\in\llrr[1][\ell]\setminus \{k_0\}.\]
We deduce from (\ref{eq9}) that
\[
   \mathrm{diff}\left(P \otimes \zeta \right)
\subseteq V.
\]

It follows that
\[M:=\left\{ v\in N_{n,1}\left( \lambda \right) \,\middle|\, \mathrm{diff}(P\otimes v)\subseteq V \right\} \]
is a nonzero subspace of $N_{n,1}(\lambda)$. Furthermore, it is easy to see that
$M$ is an $\mathfrak{sl}(1,n)$-submodule of $N_{n,1}(\lambda)$, and thus we deduce from the simplicity of $N_{n,1}(\lambda)$
that
$M=N_{n,1}(\lambda)$.
Therefore, we have $\mathrm{diff}(F(P,N_{n,1}(\lambda)))=V$, as desired.
\end{proof}

From Proposition \ref{p3}, Proposition \ref{p4} and Remark \ref{r2}, we obtain the following result.

\begin{proposition}\label{p5}
  Assume that $P$ is a simple $K_{1,n}$-module and $\lambda\in\mathbb{C}^n$ with $\lambda\ne \mathbf{0}, e_1,e_2,\cdots,e_n$.
  Then the $\mathcal{S}^\prime(1,n)$-module $F(P,N_{n,1}(\lambda))$ has a unique simple submodule
  $\mathrm{diff}(F(P,N_{n,1}(\lambda^\prime)))$, where $\lambda^\prime\in \Gamma(\lambda)$ with $|\lambda^\prime|=|\lambda|-1$.
\end{proposition}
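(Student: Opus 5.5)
The plan is to combine Proposition \ref{p3} (existence and uniqueness of a simple submodule) with Proposition \ref{p4} (simplicity of the image of $\mathrm{diff}$), using Remark \ref{r2} to place that image inside $F(P,N_{n,1}(\lambda))$. Since $\lambda\ne \mathbf{0},e_1,\dots,e_n$, Proposition \ref{p3} applies directly. It shows that every nonzero submodule of $F(P,N_{n,1}(\lambda))$ contains $P_1\otimes (N_{n,1}(\lambda))_{\bar 1}$, so $F(P,N_{n,1}(\lambda))$ has a unique simple submodule $S$. It therefore suffices to exhibit one nonzero simple submodule; it must then coincide with $S$.

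Next, since $\lambda\ne\mathbf 0$, choose $\lambda'\in\Gamma(\lambda)$ with $|\lambda'|=|\lambda|-1$. Remark \ref{r2} gives
\[\mathrm{diff}(F(P,N_{n,1}(\lambda')))\subseteq F(P,N_{n,1}(\lambda)),\]
and this is an $\mathcal{S}^\prime(1,n)$-submodule because $\mathrm{diff}$ commutes with $\pi(\mathcal{S}^\prime(1,n))$. To apply Proposition \ref{p4} to $\lambda'$ we need $\lambda'\ne\mathbf 0,\mathbf{-1}$.
\begin{itemize}
\item If $\lambda'=\mathbf 0$, then $|\lambda|=1$ and $\lambda\in\Gamma(\mathbf 0)+\dots$. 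More precisely, $\lambda'\in\Gamma(\lambda)$ forces $\lambda\in\lambda'+\mathbb Z^n$ with the same sign pattern in each coordinate. Hence $\lambda\in\mathbb Z_+^n$ with $|\lambda|=1$, i.e.\ $\lambda=e_i$, which is excluded.
\item If $\lambda'=\mathbf{-1}$, then every coordinate of $\lambda$ lies in $-\mathbb Z_{>0}$. This makes $|\lambda|\le -n<|\lambda'|+1=-n+1$, contradicting $|\lambda|=|\lambda'|+1$.
\end{itemize}
Proposition \ref{p4} applied to $\lambda'$ then says that $\mathrm{diff}(F(P,N_{n,1}(\lambda')))$ is a simple submodule of $F(P,N_{n,1}(\lambda''))$ for some $\lambda''\in\Gamma(\lambda')$ with $|\lambda''|=|\lambda'|+1$. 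We identify $N_{n,1}(\lambda'')=N_{n,1}(\lambda)$, since $\Gamma(\lambda')=\Gamma(\lambda)$ and $N_{n,1}$ depends only on $\Gamma$ and the total degree.

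It remains to check that $\mathrm{diff}(F(P,N_{n,1}(\lambda')))\ne 0$, since a simple module is by definition nonzero. If Proposition \ref{p4}'s statement is not taken to include this, I would verify it directly: pick $u\otimes v$ with $v\in N(\lambda')\subseteq A^+_{n,0}(\lambda')$ even and $u\in P$. Then
\[\mathrm{diff}(u\otimes v)=\tfrac{d}{dt}u\otimes v\xi-\sum_s \partial_{\xi_s}u\otimes t_sv,\]
and the first term is nonzero for a suitable $u$, because $\frac{d}{dt}$ does not annihilate the simple module $P$. The first term lies in a different $\xi$-component from the rest, so it cannot cancel.

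The main obstacle is bookkeeping rather than substance: confirming that $\lambda'\ne\mathbf 0,\mathbf{-1}$ follows from the hypotheses via the sign-pattern description of $\Gamma$, and that the ambient module in Proposition \ref{p4} is exactly $F(P,N_{n,1}(\lambda))$. Once these are settled, uniqueness from Proposition \ref{p3} finishes the argument.
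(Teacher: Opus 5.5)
Your proposal is correct and follows the paper's argument: the paper obtains Proposition~\ref{p5} in one line by combining Proposition~\ref{p3} (every nonzero submodule contains a common nonzero piece, so the simple submodule is unique), Proposition~\ref{p4} (the $\mathrm{diff}$-image is simple) and Remark~\ref{r2} (the inclusion $\mathrm{diff}(F(P,N_{n,1}(\lambda')))\subseteq F(P,N_{n,1}(\lambda))$). Your sign-pattern check that $\lambda'\neq\mathbf{0},\mathbf{-1}$, which is needed to invoke Proposition~\ref{p4}, the identification $N_{n,1}(\lambda'')=N_{n,1}(\lambda)$, and the nonvanishing of the image are details the paper leaves implicit, and you handle them correctly.
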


Furthermore, we have the following result.

\begin{theorem}\label{t3}
Assume that $P$ is simple $K_{1,n}$-module and $\lambda\in\mathbb{C}^n$ with
  $\lambda\ne \mathbf{0}, \mathbf{-1}, e_1,e_2,\cdots,e_n$.
  Let $\lambda^\prime\in\Gamma(\lambda)$ with $|\lambda^\prime|=|\lambda|-1$.
  The $\mathcal{S}^\prime(1,n)$-submodule filtration of $F(P,N_{n,1}(\lambda))$
  \[0\subseteq\mathrm{diff}(F(P,N_{n,1}(\lambda^\prime)))\subseteq
  \tilde{F}(P,\lambda)\subseteq
  F(P,N_{n,1}(\lambda))
  \]
 satisfies the following properties.

  (1) $\mathrm{diff}(F(P,N_{n,1}(\lambda^\prime)))$ is the unique simple submodule of $F(P,N_{n,1}(\lambda))$.

  (2) $\tilde{F}(P,\lambda)/\mathrm{diff}(F(P,N_{n,1}(\lambda^\prime)))$ is either zero or a trivial module.

  (3) $F(P,N_{n,1}(\lambda))/\tilde{F}(P,\lambda)\cong \mathrm{diff}(F(P,N_{n,1}(\lambda)))$
  is simple.

Therefore, $F(P,N_{n,1}(\lambda))$ is simple if and only if $F(P,N_{n,1}(\lambda))=\mathrm{diff}(F(P,N_{n,1}(\lambda^\prime)))$.
\end{theorem}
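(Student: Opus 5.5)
The plan is to assemble Theorem \ref{t3} from Proposition \ref{p5}, Remark \ref{r3} and Proposition \ref{p4}, with one first-isomorphism argument for part (3).

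First, the filtration makes sense. Since $\mathrm{diff}$ commutes with $\pi(\mathcal{S}^\prime(1,n))$, the kernel $\tilde{F}(P,\lambda)$ of $\mathrm{diff}$ restricted to $F(P,N_{n,1}(\lambda))$ is an $\mathcal{S}^\prime(1,n)$-submodule. Because $\mathrm{diff}^2=0$, Remark \ref{r2} and Remark \ref{r3} give
\[\mathrm{diff}(F(P,N_{n,1}(\lambda^\prime)))\subseteq \tilde{F}(P,\lambda).\]
Here $\lambda^\prime$ exists because $\lambda\ne\mathbf{0}$.

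Part (1) is exactly Proposition \ref{p5}, which applies since $\lambda\ne \mathbf{0},e_1,\dots,e_n$. Part (2) is the content of Remark \ref{r3}. If a short justification is wanted there: one compares $\tilde F(P,\lambda)$ with the image of $\mathrm{diff}$ through a Koszul-type exactness of $\mathrm{diff}$. Alternatively, one shows that $\pi(\mathcal{S}^\prime(1,n))$ maps $\tilde F(P,\lambda)$ into the image, using the same $a(m,I,i,j)$-type elements as in Proposition \ref{p4}. I will take the remark as given.

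For (3), restrict $\mathrm{diff}$ to $F(P,N_{n,1}(\lambda))$. This is a homomorphism of $\mathcal{S}^\prime(1,n)$-modules, possibly parity-changing, since $\mathrm{diff}$ is odd; its kernel is $\tilde{F}(P,\lambda)$ by definition. The first isomorphism theorem then gives
\[F(P,N_{n,1}(\lambda))/\tilde F(P,\lambda)\cong \mathrm{diff}(F(P,N_{n,1}(\lambda)))\]
up to parity change. Because $\lambda\ne\mathbf{0},\mathbf{-1}$, Proposition \ref{p4} shows this image is a simple $\mathcal{S}^\prime(1,n)$-module. One point needs checking: Proposition \ref{p4} uses Proposition \ref{p3} for the module $F(P,N_{n,1}(\lambda^{\prime\prime}))$ with $|\lambda^{\prime\prime}|=|\lambda|+1$, which requires $\lambda^{\prime\prime}\ne\mathbf{0},e_i$. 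This holds because $\lambda^{\prime\prime}\in\Gamma(\lambda)$ and $\lambda\ne\mathbf{-1},\mathbf{0}$; this is exactly the reason the hypotheses exclude $\mathbf{-1}$.

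For the final equivalence, suppose first that $F(P,N_{n,1}(\lambda))$ is simple. Its unique simple submodule from (1) is nonzero, so it equals the whole module. Conversely, if $F(P,N_{n,1}(\lambda))=\mathrm{diff}(F(P,N_{n,1}(\lambda^\prime)))$, then by (1) the module is simple.

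The main obstacle is step (2): if Remark \ref{r3} were not available, establishing triviality of the middle quotient would need a real computation with $\mathrm{diff}$-cohomology. Everything else is formal.
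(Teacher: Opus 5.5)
Your proposal is correct and is essentially the paper's proof, which simply cites Propositions \ref{p4}, \ref{p5} and Remarks \ref{r2}, \ref{r3}. Your first-isomorphism argument for (3), with the caveat that $\mathrm{diff}$ is odd so the isomorphism holds up to parity change, faithfully unpacks that citation. One small correction: $\lambda\ne\mathbf{-1}$ is what guarantees that some $\lambda''\in\Gamma(\lambda)$ with $|\lambda''|=|\lambda|+1$ exists at all, whereas $\lambda''\ne e_i$ comes from $\lambda\ne\mathbf{0}$, and $\lambda''\ne\mathbf{0}$ is automatic.
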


\begin{proof}
  This follows from Proposition \ref{p4}, Proposition \ref{p5}, Remark \ref{r2} and Remark \ref{r3}.
\end{proof}

We now turn to study the other
Shen-Larsson modules of $\mathcal{S}^\prime(1,n)$, which are not contained in Theorem \ref{t3}.
We will begin with
$F(P,N_{n,1}(\mathbf{0}))$, where $P$ is a simple $K_{1,n}$-module.
Note that $N_{n,1}(\mathbf{0})$ is the one-dimensional trivial $\mathfrak{sl}(1,n)$-module, and thus we identify
$F(P,N_{n,1}(\mathbf{0}))=P$. Define
\[\Delta P:=\frac{\mathrm{d}}{\mathrm{d}t}P+\sum_{i=1}^n{\frac{\partial}{\partial \xi _i}P}\subseteq P.\]
For any $p\in P$, $k\in \mathbb{Z}$, $I\subseteq \llrr$ and $i,j\in \llrr$ with $i\neq j$, we have
\begin{align*}
\left( t^k\frac{\partial \left( \xi _I \right)}{\partial \xi _i}\frac{\mathrm{d}}{\mathrm{d}t}-(-1)^{|I|}kt^{k-1}
\xi _I\frac{\partial}{\partial \xi _i} \right) p={}&
\frac{\mathrm{d}}{\mathrm{d}t}t^k\frac{\partial (\xi _I)}{\partial \xi _i}p-k\frac{\partial}{\partial \xi _i}t^{k-1}\xi _Ip,
\\
\left( t^k\frac{\partial \left( \xi _I \right)}{\partial \xi _i}\frac{\partial}{\partial \xi _j}
+t^k\frac{\partial \left( \xi _I \right)}{\partial \xi _j}\frac{\partial}{\partial \xi _i} \right) p
={}&-(-1)^{\left| \frac{\partial (\xi _I)}{\partial \xi _i} \right|}
\frac{\partial}{\partial \xi _j}t^k\frac{\partial \left( \xi _I \right)}{\partial \xi _i}
p-(-1)^{\left| \frac{\partial (\xi _I)}{\partial \xi _j} \right|}\frac{\partial}{\partial \xi _i}
t^k\frac{\partial \left( \xi _I \right)}{\partial \xi _j}p.
\end{align*}
So $\mathcal{S}^\prime(1,n) P\subseteq \Delta P$. Hence, $\Delta P$ is an $\mathcal{S}^\prime(1,n)$-submodule of $P$ and the quotient $P/\Delta P$ is either trivial or zero. For convenience, we denote by $A_{1,n}:=\mathbb{C}[t^{\pm 1}]\otimes \Lambda(n)$ the simple weight $K_{1,n}$-module.

\begin{proposition}\label{p6}
Let $P$ be a simple module over $K_{1,n}$. The following statements hold.

(1)If $P\ncong A_{1,n}, \Pi(A_{1,n})$, then $P$ has a unique simple $\mathcal{S}^\prime(1,n)$-submodule
$\Delta P$ and the quotient $P/\Delta P$ is either trivial or zero. Therefore, $P$ is simple if and only if
$P=\Delta P$.

(2) If $P\cong A_{1,n}$, then $P$ is not a simple $\mathcal{S}^\prime(1,n)$-module. Moreover,
$\C$ and $\Delta A_{1,n}$
are all nonzero proper $\mathcal{S}^\prime(1,n)$-submodules of $A_{1,n}$,
and $\Delta A_{1,n}/\C$ is the unique nontrivial simple subquotient of $A_{1,n}$.

\end{proposition}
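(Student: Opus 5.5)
Here $\mathcal{S}^\prime(1,n)$ acts on $P=F(P,N_{n,1}(\mathbf{0}))$ through the identification of the spanning elements with the operators displayed just before the statement. My plan is to mimic the proof of Proposition \ref{p3}: show that every nonzero submodule $V\subseteq P$ contains $\Delta P$, unless $P$ is $A_{1,n}$ up to parity. It is already known that $\Delta P$ is a submodule and that $P/\Delta P$ is trivial or zero. So in case (1) it remains to show two things: $\Delta P\neq 0$, and $\Delta P\subseteq V$ for every nonzero submodule $V$. Then $\Delta P$ is the unique simple submodule, and "simple iff $P=\Delta P$" follows.

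\textbf{Nonvanishing of $\Delta P$.} Suppose $\Delta P=0$. Then $\frac{\mathrm d}{\mathrm dt}$ and all $\frac{\partial}{\partial\xi_i}$ kill $P$. A nonzero module over $K_{1,n}$ killed by these is absurd, since $[\frac{\mathrm d}{\mathrm dt},t]=1$ would give $p=0$. So $\Delta P\neq0$ always.

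\textbf{Reduction to $P_1$.} Take $0\neq v\in V$. Applying $\frac{\partial}{\partial\xi_i}\in\mathcal{S}^\prime(1,n)$ (these act as themselves), Theorem \ref{t2}(4) lets us assume $v\in P_1$. Next, act by $\frac{\mathrm d}{\mathrm dt}$ and by the elements $t^k\frac{\mathrm d}{\mathrm dt}-kt^{k-1}\xi_i\frac{\partial}{\partial\xi_i}$ (the case $I=\{i\}$). On $P_1$ the second term vanishes, so these act by $\frac{\mathrm d}{\mathrm dt}t^k$. Hence
\[
V\supseteq \frac{\mathrm d}{\mathrm dt}\,\mathbb{C}[t^{\pm1}]\,\mathbb{C}[\tfrac{\mathrm d}{\mathrm dt}]\,v .
\]
Since $P_1$ is a simple $K_{1,0}$-module, the key step is to show that $\frac{\mathrm d}{\mathrm dt}K_{1,0}v=\frac{\mathrm d}{\mathrm dt}P_1$. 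This works once $K_{1,0}$ is generated appropriately; products like $\frac{\mathrm d}{\mathrm dt}t^a\frac{\mathrm d}{\mathrm dt}t^b$ are available, and density should give $\frac{\mathrm d}{\mathrm dt}P_1\subseteq V$.

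\textbf{Growing to all of $\Delta P$.} Next use the operators $\frac{\mathrm d}{\mathrm dt}t^k\frac{\partial\xi_I}{\partial\xi_i}-k\frac{\partial}{\partial\xi_i}t^{k-1}\xi_I$ and $\frac{\partial}{\partial\xi_j}(\cdots)$ together with $\frac{\partial}{\partial\xi_i}$. Starting from $\frac{\mathrm d}{\mathrm dt}P_1$, these generate $\frac{\mathrm d}{\mathrm dt}t^k\xi_I P_1$ and $\frac{\partial}{\partial\xi_i}t^k\xi_I P_1$. By Theorem \ref{t2}(4), $P=P_1\otimes\Lambda(n)$, so we get all of $\Delta P$.

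\textbf{Main obstacle.} The hard point is the density step on $P_1$. I must show that $\frac{\mathrm d}{\mathrm dt}\,\mathbb{C}[t^{\pm1},\frac{\mathrm d}{\mathrm dt}]\,v$ exhausts $\frac{\mathrm d}{\mathrm dt}P_1$. This fails exactly when $\frac{\mathrm d}{\mathrm dt}$ has a kernel in $P_1$ that interferes, i.e. when $P_1\cong\mathbb{C}[t^{\pm1}]$, where the constants form a trivial submodule. For simple $P_1\not\cong\mathbb{C}[t^{\pm1}]$, I would argue as follows. The element $\frac{\mathrm d}{\mathrm dt}$ acts injectively on $P_1$: if $\ker\frac{\mathrm d}{\mathrm dt}\neq0$, then $K_{1,0}$ applied to a kernel vector $u$ gives $\mathbb{C}[t^{\pm1}]u$, a copy of $\mathbb{C}[t^{\pm1}]$. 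Then $V$ contains a nonzero element of $\frac{\mathrm d}{\mathrm dt}P_1$, and conjugating by $\frac{\mathrm d}{\mathrm dt}$ transports the $K_{1,0}$-simplicity of $P_1$.

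\textbf{Case (2).} For $P=A_{1,n}$, the constants $\mathbb{C}$ are killed by all of $\mathcal{S}^\prime(1,n)$, giving a trivial submodule. The inclusion $\Delta A_{1,n}\supseteq\mathbb{C}$ holds since $\frac{\mathrm d}{\mathrm dt}t=1$. Also $A_{1,n}/\Delta A_{1,n}$ is spanned by the class of $t^{-1}\xi_{\llrr}$, which is nonzero. Then $\Delta A_{1,n}/\mathbb{C}$ is simple by the same argument, now run modulo the kernel $\mathbb{C}$.
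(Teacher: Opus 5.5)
Your architecture matches the paper's: push a vector into $P_1$ with the $\frac{\partial}{\partial\xi_i}$, use that $\frac{\mathrm{d}}{\mathrm{d}t}$ is injective on $P_1$ exactly when $P_1\not\cong A_{1,0}$, then spread to all $\xi_J$-components. But the central step is computed incorrectly, and the ``main obstacle'' paragraph does not repair it.

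\textbf{The action on $P_1$.} The element with $I=\{i\}$ is $t^k\frac{\mathrm{d}}{\mathrm{d}t}+kt^{k-1}\xi_i\frac{\partial}{\partial\xi_i}$. Written as $\frac{\mathrm{d}}{\mathrm{d}t}t^k-k\frac{\partial}{\partial\xi_i}t^{k-1}\xi_i$, its second term does \emph{not} vanish on $P_1$: for $u\in P_1$ one has $\frac{\partial}{\partial\xi_i}(t^{k-1}\xi_iu)=t^{k-1}u$. So on $P_1$ these elements act by $t^k\frac{\mathrm{d}}{\mathrm{d}t}$, not by $\frac{\mathrm{d}}{\mathrm{d}t}t^k$.

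Your inclusion $V\supseteq\frac{\mathrm{d}}{\mathrm{d}t}K_{1,0}v$ is therefore false in general. Take $P=A_{1,n}$, $V=\C$, $v=1$: the derivation $t^2\frac{\mathrm{d}}{\mathrm{d}t}+2t\xi_i\frac{\partial}{\partial\xi_i}$ kills $1$, while your formula would put $2t$ into $V$. What you actually get is $V\supseteq\operatorname{span}\{(\frac{\mathrm{d}}{\mathrm{d}t})^jt^k\frac{\mathrm{d}}{\mathrm{d}t}v\}=K_{1,0}\frac{\mathrm{d}}{\mathrm{d}t}v$.

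With this there is no density or ``conjugation by $\frac{\mathrm{d}}{\mathrm{d}t}$'' issue. That phrase is not an argument, and the equality $\frac{\mathrm{d}}{\mathrm{d}t}K_{1,0}v=\frac{\mathrm{d}}{\mathrm{d}t}P_1$ you single out holds trivially for every $v\neq0$. The injectivity you correctly prove gives $\frac{\mathrm{d}}{\mathrm{d}t}v\neq0$, hence $K_{1,0}\frac{\mathrm{d}}{\mathrm{d}t}v=P_1$.

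This is also the right target. Since $\frac{\partial}{\partial\xi_i}P=P_1\otimes\operatorname{span}\{\xi_J\mid i\notin J\}$, you need all of $P_1$ in those components, not just $\frac{\mathrm{d}}{\mathrm{d}t}P_1$, which can be a proper subspace. The paper gets this by applying $t^k\frac{\partial(\xi_I)}{\partial\xi_i}\frac{\mathrm{d}}{\mathrm{d}t}+(-1)^{1+|I|}kt^{k-1}\xi_I\frac{\partial}{\partial\xi_i}$ to $u\in V\cap P_1$. This gives $t^k\frac{\mathrm{d}}{\mathrm{d}t}u\otimes\frac{\partial(\xi_I)}{\partial\xi_i}$, hence $P_1\otimes\xi_J\subseteq V$ for all $J\neq\{1,\dots,n\}$. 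Then $\xi_{I\setminus\{i\}}\frac{\mathrm{d}}{\mathrm{d}t}$ applied to $P_1\otimes\xi_i$ supplies the top component $\frac{\mathrm{d}}{\mathrm{d}t}P_1\otimes\xi_{\{1,\dots,n\}}$.

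\textbf{Case (2).} This case is not actually argued. You must show that every nonzero submodule $N\neq\C$ of $A_{1,n}$ contains $\Delta A_{1,n}$. Together with $\C\subseteq\Delta A_{1,n}$ and $\dim A_{1,n}/\Delta A_{1,n}=1$, this gives both the complete list of submodules and the simplicity of $\Delta A_{1,n}/\C$.

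``The same argument modulo $\C$'' breaks at the reduction step, because pushing a vector into $P_1=A_{1,0}$ can land in the constants. For $v=\xi_1\in\Delta A_{1,n}$ it produces $1$, which generates only $\C$. The paper handles this as follows. Take $I_0$ maximal in the support of $v=\sum_Iv_I\xi_I\in N\setminus\C$. If $v_{I_0}\in\C$, replace $v$ by $t\frac{\partial}{\partial\xi_i}v$ with $i\in I_0$; note $t\frac{\partial}{\partial\xi_i}\in\mathcal{S}^\prime(1,n)$. This makes the new top coefficient non-constant, so $\frac{\mathrm{d}}{\mathrm{d}t}v_{I_0}\neq0$, and the corrected argument of (1) then gives $\Delta A_{1,n}\subseteq N$.
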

\begin{proof}
By Theorem \ref{t2} (4), without loss of generality, we may assume that $P=P_1\otimes \Lambda(n)$, where $P_1$ is a simple $K_{1,0}$-module.

(1) If $P\ncong A_{1,n}$, i.e., $P_1\ncong A_{1,0}$. 
Let $N$ be any nonzero $\mathcal{S}^\prime(1,n)$-submodule of $P$ and $0\neq v=\sum_{I} v_I\otimes\xi_I\in N$.  Let $I_0=\{i_1,\cdots, i_k\}$ be a maximal element in $\{I~|~ v_I\neq 0\}$ with respect to ``$\subseteq$''. By applying $\frac{\partial}{\partial \xi_{i_1}}\cdots \frac{\partial}{\partial \xi_{i_k}}$ to $v$, we deduce that $v_{I_0}\otimes 1\in N$. Then we have
$$(t^k\frac{\partial(\xi_I)}{\partial \xi_i}\frac{\mathrm{d}}{\mathrm{dt}}-(-1)^{|I|}kt^{k-1}\xi_I\frac{\partial}{\partial \xi_i})( v_{I_0}\otimes 1)=t^k\frac{\mathrm{d}}{\mathrm{dt}} v_{I_0}\otimes\frac{\partial(\xi_I)}{\partial \xi_i}\in N,~~\forall i\in \llrr,  I\subseteq \llrr.$$
So $(\frac{\mathrm{d}}{\mathrm{dt}})^k \left(t^k\frac{\mathrm{d}}{\mathrm{dt}} v_{I_0}\right)\otimes\frac{\partial(\xi_I)}{\partial \xi_i}\in N,$
which implies that
\begin{equation*}\label{eqofK}
K_{1,0}\frac{\mathrm{d}}{\mathrm{dt}} v_{I_0}\otimes\frac{\partial(\xi_I)}{\partial \xi_i}\subseteq N, ~~\forall i\in \llrr, I\subseteq \llrr.
\end{equation*}
Since $\frac{\mathrm{d}}{\mathrm{dt}} v_{I_0}\neq 0$ and $P_1$ is simple, it follows that $P_1\otimes \frac{\partial(\xi_I)}{\partial \xi_i}\subseteq  N$, i.e., $\frac{\partial}{\partial \xi_i}(P) \subseteq N$. Moreover, for any $p\in P_1$, $\varnothing \neq I\subseteq \llrr$ and $i\in I$, we have $\xi_{I\backslash \{i\}}\frac{\mathrm{d}}{\mathrm{dt}} (p\otimes\xi_i)=(\frac{\mathrm{d}}{\mathrm{dt}} p)\otimes\xi_I\in N$.
Combining with $\frac{\mathrm{d}}{\mathrm{dt}} P_1\otimes 1\subseteq N$, we have $\frac{\mathrm{d}}{\mathrm{dt}}(P) \subseteq N$.  Hence, $\Delta P\subseteq N$ and $\Delta P$ is the unique simple $\mathcal{S}^\prime(1,n)$-submodule of $P$.

(2) We only need to consider the case that $P=A_{1,n}$.  Let $N$ be any nonzero submodule of $A_{1,n}$ distinct from $\C$ and $v=\sum v_I\xi_I\in A_{1,n}\backslash \C$ with $v_I\in A_{1,0}$.  Let $I_0=\{i_1,\cdots, i_k\}$ be a maximal element in $\{I~|~ v_I\neq 0\}$ with respect to ``$\subseteq$''.  Then $v_{I_0}\notin \C$ if $I_0=\varnothing$. If $I_0\neq \varnothing$, by replacing $v$ with $t\frac{\partial}{\partial \xi_i} v$ and $I_0$ with $I_0\backslash \{i\}$ for $i\in I_0$ if necessary, we also assume that $v_{I_0}\notin \C$. So $\frac{\mathrm{d}}{\mathrm{dt}} v_{I_0}\neq 0$. Then by the same arguments as in $(1)$, we see that $\Delta A_{1,n}\subseteq N$. It is clear that $\C\subseteq \Delta A_{1,n}$. Thus, $\C$ is the unique simple submodule of $A_{1,n}$ and $\Delta A_{1,n}/ \C$ is the unique simple submodule of $A_{1,n}/\C$.
\end{proof}

The following result studies the $\mathcal{S}^\prime(1,n)$-module $F(P,N_{n,1}(e_i))$ for $i\in\llrr$.

\begin{proposition}\label{p99}
  Let $P$ be a simple $K_{1,n}$-module. For $i\in\llrr$,
  the $\mathcal{S}^\prime(1,n)$-submodule filtration of $F(P,N_{n,1}(e_i))$
\[  0\subseteq\mathrm{diff}(F(P,N_{n,1}(\mathbf{0})))\subseteq
  \tilde{F}(P,e_i)\subseteq
  F(P,N_{n,1}(e_i))
\]
satisfies the following properties.

  (1) If $P\cong A_{1,n}$,
  then $\mathrm{diff}(F(P,N_{n,1}(\mathbf{0})))\cong A_{1,n}/\C $, which has a unique nontrivial simple subquotient $\Delta A_{1,n}/\C$.

  (2) If $P\ncong A_{1,n}$,
  then $\mathrm{diff}(F(P,N_{n,1}(\mathbf{0})))\cong F(P,N_{n,1}(\mathbf{0}))=P $, which has a unique simple submodule
$\Delta P$ with the corresponding quotient $P/\Delta P$ being either zero or a trivial module.

(3) $\tilde{F}(P,e_i)/\mathrm{diff}(F(P,N_{n,1}(\mathbf{0})))$ is either zero or a trivial module.

(4) $F(P,N_{n,1}(e_i))/\tilde{F}(P,e_i)\cong \mathrm{diff}(F(P,N_{n,1}(e_i)))$ is simple.

In particular, $F(P,N_{n,1}(e_i))$ is not simple.
\end{proposition}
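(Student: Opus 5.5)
We identify $F(P,N_{n,1}(\mathbf{0}))=P$ and study $\mathrm{diff}$ on it explicitly, reducing the statement to Proposition \ref{p6}, Proposition \ref{p4} and Remark \ref{r3}. The filtration is well defined by Remark \ref{r2} and Remark \ref{r3}: $\mathbf{0}\in\Gamma(e_i)$ has $|\mathbf{0}|=|e_i|-1$, and since $\mathrm{diff}^2=0$ we get $\mathrm{diff}(F(P,N_{n,1}(\mathbf{0})))\subseteq\tilde F(P,e_i)$. The first step is to compute $\mathrm{diff}$ on $P\otimes 1$, where $1$ spans $N_{n,1}(\mathbf{0})=\C$. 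By definition
\[\mathrm{diff}(p\otimes 1)=\frac{\mathrm{d}}{\mathrm{d}t}p\otimes \xi-\sum_{s=1}^n\frac{\partial}{\partial \xi_s}p\otimes t_s ,\]
and the vectors $\xi,t_1,\dots,t_n$ are linearly independent in $N_{n,1}(e_i)$. Hence $\ker\mathrm{diff}|_{P}=\{p\in P\mid \frac{\mathrm{d}}{\mathrm{d}t}p=\frac{\partial}{\partial\xi_s}p=0,\ \forall s\}$, and $\mathrm{diff}$ is an $\mathcal{S}^\prime(1,n)$-homomorphism, up to parity change, from $P$ onto $\mathrm{diff}(F(P,N_{n,1}(\mathbf{0})))$.

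The second step is to identify this kernel. Write $P=P_1\otimes\Lambda(n)$ using Theorem \ref{t2} (4). An element killed by all $\frac{\partial}{\partial\xi_s}$ lies in $P_1\otimes 1$, so the kernel is $\{u\in P_1\mid \frac{\mathrm{d}}{\mathrm{d}t}u=0\}$. Since $t^{\pm1}$ acts invertibly, if $0\ne u$ satisfies $\frac{\mathrm{d}}{\mathrm{d}t}u=0$, then $t\frac{\mathrm{d}}{\mathrm{d}t}$ acts diagonalizably on $K_{1,0}u=P_1$. This makes $P_1$ a simple weight module containing weight $0$, so $P_1\cong\C[t^{\pm1}]$ and $P\cong A_{1,n}$ up to parity; in that case the kernel is exactly $\C$. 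This gives (1) and (2): the image is isomorphic to $A_{1,n}/\C$, respectively to $P$. The submodule statements then come from Proposition \ref{p6}. In case (1), the submodules of $A_{1,n}$ containing $\C$ correspond to those of $A_{1,n}/\C$, and the only nontrivial simple subquotient is $\Delta A_{1,n}/\C$. One must also handle $P\cong\Pi(A_{1,n})$, which falls under (1) after the parity change.

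Part (3) is Remark \ref{r3} with $\lambda=e_i\ne\mathbf{0}$. Part (4) has two ingredients. First, $F(P,N_{n,1}(e_i))/\tilde F(P,e_i)\cong\mathrm{diff}(F(P,N_{n,1}(e_i)))$ by the first isomorphism theorem. Second, simplicity follows from Proposition \ref{p4}, which applies because $e_i\ne\mathbf{0},\mathbf{-1}$: the target $\lambda''=e_i+e_j\in\Gamma(e_i)$ satisfies $\lambda''\ne\mathbf{0},e_1,\dots,e_n$, so the Proposition \ref{p3} input of its proof holds. The step needing the most care is checking that Proposition \ref{p4}'s proof really goes through here. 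That proof needs a nonzero $\frac{\partial}{\partial t_{j_0}}\frac{\partial}{\partial t_{i_0}}t_{r_0}v$ landing in $N_{n,1}(e_i)$. Starting from $v\in(N_{n,1}(\lambda''))_{\bar 1}$ of degree $|\lambda''|-1=1$, take $v=t_j\xi$ with $r_0=i$, $i_0=i$, $j_0=j$; this gives a nonzero element, so the argument holds.

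For the final claim, $\mathrm{diff}(F(P,N_{n,1}(\mathbf{0})))$ is a nonzero proper submodule. It is nonzero because its source $P$, or $A_{1,n}/\C$ in case (1), is nonzero. It is proper because the quotient $\mathrm{diff}(F(P,N_{n,1}(e_i)))$ is nonzero, as $\mathrm{diff}(p\otimes t_i)\ne0$ for suitable $p$. Hence $F(P,N_{n,1}(e_i))$ is not simple.
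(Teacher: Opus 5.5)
Your proposal is correct and follows essentially the same route as the paper. You identify $\ker(\mathrm{diff}|_{P})$ as the vectors killed by $\frac{\mathrm{d}}{\mathrm{d}t}$ and all $\frac{\partial}{\partial \xi_s}$, and show it is nonzero exactly when $P\cong A_{1,n}$ up to parity; you do this by a direct weight argument on $P_1$, where the paper uses $K_{1,n}/(\sum_{s}K_{1,n}\frac{\partial}{\partial \xi_s}+K_{1,n}\frac{\mathrm{d}}{\mathrm{d}t})\cong A_{1,n}$. You then invoke Proposition~\ref{p6}, Remark~\ref{r3} and Proposition~\ref{p4} as the paper does, and your explicit handling of $P\cong\Pi(A_{1,n})$, of Proposition~\ref{p4} at $\lambda=e_i$, and of the final non-simplicity claim fills in points the paper leaves implicit.
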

\begin{proof}
  Note that
  \[\tilde{F}(P,\mathbf{0})=
\mathrm{Ker}(\mathrm{diff}\colon F(P,N_{n,1}(\mathbf{0}))\rightarrow F(P,N_{n,1}(e_i)))
=\left\{ p\in P \,\middle|\, \frac{\mathrm{d}}{\mathrm{d}t}p=\frac{\partial}{\partial \xi _j}p=0 , \forall j\in\llrr \right\}.
\]
Thus, from $K_{1,n}/ (\sum_{i\in \llrr} K_{1,n}\frac{\partial}{\partial \xi_i}+K_{1,n}\frac{\mathrm{d}}{\mathrm{d}t})\cong A_{1,n}$ as $K_{1,n}$-modules, we have that $\tilde{F}(P,\mathbf{0})=0$ if and only if $P\ncong A_{1,n}$.
This implies that $\mathrm{diff}(F(P,N_{n,1}(\mathbf{0})))\cong F(P,N_{n,1}(\mathbf{0}))$
if and only if $P\ncong A_{1,n}$, and then the statement (2) follows from
Proposition \ref{p6} (1).

If $P\cong A_{1,n}$, then
$\mathrm{diff}(F(P,N_{n,1}(\mathbf{0})))\cong A_{1,n}/\C$ and $A_{1,n}/\C$  has a unique nontrivial simple subquotient $\Delta A_{1,n}/\C$
by Proposition \ref{p6} (2). This gives statement (1).

The statements (3) and (4) follow from Proposition \ref{p4} and Remark \ref{r3}.
\end{proof}

The following result studies the $\mathcal{S}^\prime(1,n)$-module $F(P,N_{n,1}(\mathbf{-1}))$.

\begin{proposition}\label{p98}
Let $P$ be a simple $K_{1,n}$-module and $\lambda^\prime\in\Gamma(\mathbf{-1})$ with $|\lambda^\prime|=-1-n$.
The $\mathcal{S}^\prime(1,n)$-submodule filtration of $F(P,N_{n,1}(\mathbf{-1}))$
\[
0\subseteq\mathrm{diff}(F(P,N_{n,1}(\lambda^\prime)))\subseteq
\tilde{F}(P,\mathbf{-1})\subseteq
F(P,N_{n,1}(\mathbf{-1}))
\]
satisfies the following properties.

(1) $\mathrm{diff}(F(P,N_{n,1}(\lambda^\prime)))$ is the unique simple submodule of $F(P,N_{n,1}(\mathbf{-1}))$.

(2) $\tilde{F}(P,\mathbf{-1})/\mathrm{diff}(F(P,N_{n,1}(\lambda^\prime)))$ is either zero or a trivial module.

(3) If $P\ncong A_{1,n}$, then $F(P,N_{n,1}(\mathbf{-1}))/\tilde{F}(P,\mathbf{-1})\cong \Pi (\Delta P)$ is simple.

(4) If $P\cong A_{1,n}$, then $F(P,N_{n,1}(\mathbf{-1}))/\tilde{F}(P,\mathbf{-1})\cong \Pi(\Delta A_{1,n})$,
which has a trivial submodule $\mathbb{C}$ with the corresponding quotient $\Delta A_{1,n}/\mathbb{C}$ being simple.

In particular, $F(P,N_{n,1}(\mathbf{-1}))$ is not simple.

\end{proposition}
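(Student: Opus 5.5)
We will treat the filtration with the same tools as in Theorem \ref{t3}, using Remark \ref{r2} to handle the failure of the usual ``next weight'' at $\mathbf{-1}$.

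\textbf{Statements (1) and (2).} Since $\mathbf{-1}\ne \mathbf{0},e_1,\dots,e_n$, Proposition \ref{p3} applies: $F(P,N_{n,1}(\mathbf{-1}))$ has a unique simple submodule. The weight $\lambda'\in\Gamma(\mathbf{-1})$ with $|\lambda'|=-1-n$ satisfies $\lambda'\ne\mathbf{0},\mathbf{-1}$. Hence Proposition \ref{p4} shows that $\mathrm{diff}(F(P,N_{n,1}(\lambda')))$ is a simple submodule of $F(P,N_{n,1}(\lambda''))$ for some $\lambda''\in\Gamma(\lambda')$ with $|\lambda''|=-n$. We may choose $\lambda''=\mathbf{-1}$, because $\Gamma(\lambda')=\Gamma(\mathbf{-1})=(-\mathbb{Z}_{>0})^n$ and $\mathbf{-1}$ is the unique element of degree $-n$. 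So this simple module sits inside $F(P,N_{n,1}(\mathbf{-1}))$, and by uniqueness it is the unique simple submodule. This is (1). Statement (2) is exactly Remark \ref{r3} with $\lambda=\mathbf{-1}$, using $\mathrm{diff}^2=0$.

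\textbf{Statements (3) and (4).} By definition $F(P,N_{n,1}(\mathbf{-1}))/\tilde F(P,\mathbf{-1})\cong \mathrm{diff}(F(P,N_{n,1}(\mathbf{-1})))$. By Remark \ref{r2}, this image lies in $P\otimes \C\,t_1^{-1}\cdots t_n^{-1}\xi$, which is isomorphic to $\Pi(P)=\Pi(F(P,N_{n,1}(\mathbf{0})))$. The task is to identify the image. Write $N_{n,1}(\mathbf{-1})=N(\mathbf{-1})\oplus N(\mu)\xi$, where $N(\mathbf{-1})=\C\, t_1^{-1}\cdots t_n^{-1}$ and $\mu\in\Gamma(\mathbf{-1})$ has $|\mu|=-n-1$, so $N(\mu)$ is spanned by the $t^{-1}_1\cdots t_n^{-1}t_s^{-1}$.

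Compute $\mathrm{diff}$ on each summand.
\begin{itemize}
\item On $p\otimes t_1^{-1}\cdots t_n^{-1}$ we get $\frac{\mathrm{d}}{\mathrm{d}t}p\otimes t_1^{-1}\cdots t_n^{-1}\xi$. The term $\sum_s\frac{\partial}{\partial\xi_s}p\otimes t_s t_1^{-1}\cdots t_n^{-1}$ vanishes in $A^+_{n,1}(\mathbf{-1})$, since $\C[t_s^{\pm1}]/\C[t_s]$ kills $t_s^0$.
\item On $p\otimes t_1^{-1}\cdots t_n^{-1}t_s^{-1}\xi$ we get $\pm\frac{\partial}{\partial \xi_s}p\otimes t_1^{-1}\cdots t_n^{-1}\xi$.
\end{itemize}
Hence the image is $\Delta P\otimes t_1^{-1}\cdots t_n^{-1}\xi$, which is isomorphic to $\Pi(\Delta P)$. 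One must check the parity and sign bookkeeping, and that the identification is compatible with the $\pi$-action, i.e.\ that $\C\,t_1^{-1}\cdots t_n^{-1}\xi$ is the trivial $\mathfrak{sl}(1,n)$-module, as noted in Remark \ref{r2}.

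Then Proposition \ref{p6} finishes both statements:
\begin{itemize}
\item If $P\ncong A_{1,n}$, then $\Delta P$ is simple by Proposition \ref{p6}(1); parity change preserves simplicity, giving (3).
\item If $P\cong A_{1,n}$, Proposition \ref{p6}(2) gives that $\C\subseteq\Delta A_{1,n}$ is the trivial submodule and $\Delta A_{1,n}/\C$ is the simple quotient, giving (4).
\end{itemize}
Here we use that $\Delta A_{1,n}/\C$ is simple, being the unique simple submodule of $A_{1,n}/\C$ and lying in $\Delta A_{1,n}$.

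\textbf{Non-simplicity.} For the final claim, $\mathrm{diff}(F(P,N_{n,1}(\lambda')))$ is a nonzero submodule. It is proper because the quotient by $\tilde F(P,\mathbf{-1})$ is $\Pi(\Delta P)$, which is nonzero: $\frac{\mathrm{d}}{\mathrm{d}t}$ does not kill all of $P$.

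The main obstacle is the explicit computation of $\mathrm{diff}$ on $N_{n,1}(\mathbf{-1})$, namely the vanishing of the $t_s\cdot t_s^{-1}$ terms and the sign conventions, which must give exactly $\Delta P$.
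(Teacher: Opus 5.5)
Your proposal is correct and follows essentially the paper's own proof. Statements (1) and (2) come from Propositions \ref{p3} and \ref{p4} (which is exactly how the paper obtains Proposition \ref{p5}) together with Remark \ref{r3}; statements (3) and (4) come from the identification $\mathrm{diff}(F(P,N_{n,1}(\mathbf{-1})))=\Delta P\otimes t_1^{-1}\cdots t_n^{-1}\xi$, with $\C\,t_1^{-1}\cdots t_n^{-1}\xi$ a trivial $\mathfrak{sl}(1,n)$-module, followed by Proposition \ref{p6}. Your explicit computation of $\mathrm{diff}$ on the basis of $N_{n,1}(\mathbf{-1})$, using $t_s\cdot t_s^{-1}=0$ in $\C[t_s^{\pm1}]/\C[t_s]$, supplies the detail the paper leaves implicit, and your non-simplicity argument via $\tilde{F}(P,\mathbf{-1})\neq F(P,N_{n,1}(\mathbf{-1}))$ is sound.
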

\begin{proof}
  Note that
  \begin{align*}
  \mathrm{diff}(F(P,N_{n,1}(\mathbf{-1})))={}&\frac{\mathrm{d}}{\mathrm{d}t} P\otimes t_1^{-1}t_2^{-1}\cdots t_n^{-1} \xi+\sum_{i=1}^n\frac{\partial}{\partial \xi_i} P\otimes t_1^{-1}t_2^{-1}\cdots t_n^{-1} \xi\\
  ={}&\Delta P\otimes t_1^{-1}t_2^{-1}\cdots t_n^{-1} \xi \subseteq P\otimes\C t_1^{-1}t_2^{-1}\cdots t_n^{-1} \xi
   \end{align*}
with $\C t_1^{-1}t_2^{-1}\cdots t_n^{-1} \xi$ is a trivial $\mathfrak{sl}(1,n)$-module. Hence,
  $\mathrm{diff}(F(P,N_{n,1}(\mathbf{-1})))$ is isomorphic to $\Pi(\Delta P)$, which is the
  $\mathcal{S}^\prime(1,n)$-submodule of $\Pi(F(P,N_{n,1}(\mathbf{0})))$.
  Then the statements (3) and (4) follow from Proposition \ref{p6}.
  The statements (1) and (2) are implied by Proposition \ref{p5} and Remark \ref{r3}.
\end{proof}

\section{Cuspidal modules}
In this section, we refine the classification of simple
cuspidal modules over $\mathcal{S}^\prime (1,n)$ obtained in \cite{MMZ}.
We first recall some related notations and results from the reference \cite{MMZ}.

For $i\in\llrr[1][n-1]$, set $h_i=\xi _i\frac{\partial}{\partial \xi _i}
-\xi _{i+1}\frac{\partial}{\partial \xi _{i+1}}$.
Let $H_0:=\oplus_{i=1}^{n-1}\C h_i$ be an abelian subalgebra of $\mathcal{S}^\prime(1,n)$.
An element $\lambda\in H_0^*$ is dominant if and only if
$\lambda(h_i)\in\Z_+$ for all $i\in\llrr[1][n-1]$.

Denote by $\mathrm{Vir}$ the subalgebra of $\mathcal{S}^\prime(1,n)$ spanned by  the following elements
\[
E_k:=-\left( t^{k+1}\frac{\mathrm{d}}{\mathrm{d}t}+\left( k+1 \right)t^k \xi_1\frac{\partial}{\partial \xi_1} \right)~~\forall k\in \Z.
\]
Then $\mathrm{Vir}$ is isomorphic to the Witt algebra $\mathrm{Der}(\C[t^{\pm 1}])$.

 A simple $\mathcal{S}^\prime (1,n)$-module is called a cuspidal module if

$(1)$ $M=\oplus_{i\in \Z} M_{a+i}$, where $M_{a+i}=\{v\in M~|~ E_0 v=(a+i)v\}$ for some $a\in \C$;

$(2)$ $\dim M_{a+i}<\infty$ for all $i\in \Z$;

$(3)$ $\mathrm{supp}(M):=\{i\in \Z~|~ M_{a+i}\neq 0\}$ is neither lower bounded nor upper bounded.

Take an element
\[F:=\left( 2^n-2 \right) \xi _1\frac{\partial}{\partial \xi _1}-\sum_{i=2}^n{2^{i-1}\xi _i\frac{\partial}{\partial \xi _i}}
\in H_0.\]
Then there is an eigenspace decomposition of $\mathrm{ad}(F)$
\[\mathcal{S}^\prime(1,n)=\bigoplus_{i\in \Z} \mathcal{S}^\prime(1,n)^{(i)},\]
which provides the following triangular decomposition of $\mathcal{S}^\prime(1,n)$:
\[\mathcal{S}^\prime(1,n)=\mathcal{S}^\prime(1,n)^-\oplus \mathcal{S}^\prime(1,n)^{\left( 0 \right)}\oplus \mathcal{S}^\prime(1,n)^+,\]
where $\mathcal{S}^\prime(1,n)^{(0)}=\mathrm{Vir}+\C[t, t^{-1}]H_0$ and
$\mathcal{S}^\prime(1,n)^{\pm}=\oplus_{i>0} \mathcal{S}^\prime(1,n)^{(\pm i)}$. More precisely, $\mathcal{S}^\prime(1,n)^+$ is spanned by the following elements
\begin{align*}
  &\left\{ t^k\frac{\partial \left( \xi _I \right)}{\partial \xi _i}\frac{\mathrm{d}}{\mathrm{d}t}+\left( -1 \right) ^{1+|I|}kt^{k-1}
  \xi _I\frac{\partial}{\partial \xi _i} \,\middle|\, k\in \mathbb{Z} ,i\in \llrr[2][n],\left\{ 1 \right\} \subseteq I\subseteq \llrr \right\}
\\
\bigcup& \left\{ t^k\xi _Ih_i \,\middle|\, k\in \mathbb{Z} ,i\in\llrr[1][n-1] ,\left\{ 1 \right\}\subseteq I\subseteq \llrr \right\}
\\
\bigcup &\left\{ t^k\xi _{i_1}\xi _{i_2}\cdots \xi _{i_r}\frac{\partial}{\partial \xi _i}
\,\middle|\, k\in \mathbb{Z} ,i\in\llrr[2][n] , i>i_r>\cdots >i_1\geq 1 \right\}
.
\end{align*}
For $(\lambda,\delta,u)\in H_0^*\times \C\times \C$, let $\mathrm{Tens}\left(\lambda,\delta,u\right):=
\overline{\C[t,t^{-1}]}$  (a copy of  $\C[t,t^{-1}]$) be the $\mathcal{S}^\prime(1,n)^{(0)}$-module with actions
\begin{align*}
E_k\cdot \overline{t^m}={}&-\left(m+\delta k+u \right) \overline{t^{k+m}},\\
\left(t^kh_i\right)\cdot \overline{t^m}={}&\lambda\left(h_i\right)\overline{t^{k+m}},
\end{align*}
where $k,m\in \Z$ and $i\in\llrr[1][n-1]$.
$\mathrm{Tens}\left(\lambda,\delta,u\right)$ can be regarded as an $\mathcal{S}^\prime(1,n)^{(0)}\oplus
\mathcal{S}^\prime(1,n)^{+}$-module with a trivial action of $\mathcal{S}^\prime(1,n)^{+}$.
Let $V(\lambda,\delta,u)$
be the unique simple quotient of the generalized
Verma module
\[\mathrm{Ind}_{\mathcal{S}^\prime(1,n)^{(0)}\oplus
\mathcal{S}^\prime(1,n)^{+}}^{\mathcal{S}^\prime(1,n)} \mathrm{Tens}\left(\lambda,\delta,u\right).\]

We recall the result concerning the classification of simple
cuspidal modules over $\mathcal{S}^\prime(1,n)$ and present it in the following theorem.
\begin{theorem}[({\cite[Theorem 6 and Theorem 8]{MMZ}})]\label{t9}
  Assume that $(\lambda,\delta,u)\in H_0^*\times \C\times \C$ satisfies

  (i) $\lambda$ is dominant,

  (ii) either $\lambda(h_1)\geq 2$, or $\lambda(h_1)=1$ and $\delta =1$.
\\
Then $V(\lambda,\delta,u)$ is a simple cuspidal module over $\mathcal{S}^\prime(1,n)$.
Moreover, up to a parity-change, every simple cuspidal module over $\mathcal{S}^\prime(1,n)$ is isomorphic to one of such modules.

\end{theorem}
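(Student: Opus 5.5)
This is Theorem 6 and Theorem 8 of \cite{MMZ}, cited without proof, so I will not re-derive the classification. I would only check that the hypotheses and conventions here match those of \cite{MMZ}, and then invoke their argument. The plan has four steps.

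\emph{Step 1: identify the structural data.} First I would verify that the following coincide with the corresponding objects in \cite{MMZ} after the identification of $\mathcal{S}^\prime(1,n)$ with the superconformal algebra of Kac--van de Leur:
\begin{itemize}[leftmargin=2em]
\item the Virasoro subalgebra $\mathrm{Vir}$, spanned by the $E_k$;
\item the Cartan part $H_0$;
\item the grading element $F$;
\item the resulting triangular decomposition $\mathcal{S}^\prime(1,n)=\mathcal{S}^\prime(1,n)^-\oplus\mathcal{S}^\prime(1,n)^{(0)}\oplus\mathcal{S}^\prime(1,n)^+$.
\end{itemize}
Concretely, one checks that $\mathrm{ad}(F)$ has integer eigenvalues with $0$-eigenspace $\mathrm{Vir}+\C[t^{\pm1}]H_0$. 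The coefficients $2^{i-1}$ make $F$ regular on the roots. One also checks that the listed spanning set of $\mathcal{S}^\prime(1,n)^+$ is exactly the positive-eigenvalue part.

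\emph{Step 2: the reduction used in \cite{MMZ}.} For a simple cuspidal $M$, restriction to $\mathcal{S}^\prime(1,n)^{(0)}$ has finite-dimensional $E_0$-weight spaces. Since $\mathrm{ad}(F)$ gives bounded $\Z$-gradings, $M$ contains a nonzero subspace annihilated by $\mathcal{S}^\prime(1,n)^+$. That subspace contains a simple cuspidal $\mathcal{S}^\prime(1,n)^{(0)}$-module. By Mathieu's classification of cuspidal Virasoro modules, extended to the current part $\C[t^{\pm1}]H_0$, this module is a $\mathrm{Tens}(\lambda,\delta,u)$ or a trivial module. Hence $M$ is a quotient of the generalized Verma module, so $M\cong V(\lambda,\delta,u)$ up to parity.

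\emph{Step 3: conditions (i) and (ii).} Condition (i) comes from local finiteness of the $\mathfrak{sl}(n)$-part. Condition (ii) comes from the requirement that the odd root vectors of degree $-1$ in $F$ do not produce unbounded weight multiplicities. This excludes $\lambda(h_1)=0$, and it forces $\delta=1$ when $\lambda(h_1)=1$.

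\emph{Step 4 and the main obstacle.} The converse direction, that such $V(\lambda,\delta,u)$ are cuspidal, amounts to bounding the dimensions of the weight spaces. I expect this to be the hardest part. Here I would simply quote \cite[Theorem 8]{MMZ}, possibly after checking that the sign conventions for $E_k$ and for $\delta$ agree with theirs.
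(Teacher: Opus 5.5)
Your proposal matches what the paper does: the statement is quoted from \cite[Theorems 6 and 8]{MMZ} without proof, so checking that $\mathrm{Vir}$, $H_0$, $F$ and the triangular decomposition agree with the conventions of \cite{MMZ} and then invoking their result is the same route. One caution if you keep your Step 2 sketch: boundedness of the $\mathrm{ad}(F)$-grading of $\mathcal{S}^\prime(1,n)$ does not by itself bound the $F$-spectrum of $M$, because each $\mathcal{S}^\prime(1,n)^{(i)}$ is infinite-dimensional (it contains all $t^k$-shifts); so the existence of a nonzero $\mathcal{S}^\prime(1,n)^+$-invariant subspace is part of what \cite{MMZ} prove, not an immediate consequence, and the sketch should be labelled as a description rather than an argument.
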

In the sequel, we construct the cuspidal modules over $\mathcal{S}^\prime(1,n)$ by
means of its Shen-Larsson modules. There is a $\Z$-grading of $\mathfrak{sl}(1,n)$ with respect to the adjoint action of $F$:
\begin{equation}\label{eq99}
\mathfrak{sl}(1,n)=\mathfrak{sl}(1,n)^-\oplus \mathfrak{sl}(1,n)^0\oplus \mathfrak{sl}(1,n)^+,
\end{equation}
where $\mathfrak{sl}(1,n)^0=\mathrm{span}_{\C}\{ E_{0,0}+E_{s,s}\,|\,s\in\llrr\}$ and
\begin{align*}
  \mathfrak{sl}(1,n)^-={}&\mathrm{span}_{\C}\{ E_{0,1}, E_{i,0}, E_{t,s}~|~ 1<i\leq n, 1\leq s<t\leq n\},\\
  \mathfrak{sl}(1,n)^+={}&\mathrm{span}_{\C}\{ E_{1,0}, E_{0,i}, E_{s,t}~|~ 1<i\leq n, 1\leq s<t\leq n\}.
\end{align*}
For $\mu=(\mu_1,\cdots,\mu_n)\in \C^n$,
let $\mathsf{L}(\mu)$ be the simple highest weight $\mathfrak{sl}(1,n)$-module
with the highest weight $\mu$, which is generated by the highest weight vector $v_{\mu}$ satisfying
\[
\left(E_{0,0}+E_{i,i}\right)\cdot v_\mu=\mu_i v_\mu, \forall i\in\llrr,~\textrm{and}~ \mathfrak{sl}(1,n)^+ \cdot v_\mu=0.
\]
Clearly, $\mathsf{L}(\mu)$ is finite-dimensional if and only if
$\mu$ is dominant, namely, $\mu_i-\mu_{i+1}\in \Z_+$ for all $i\in\llrr[1][n-1]$.

\begin{proposition}\label{p10}
Assume that $(\lambda,\delta,u)\in H_0^*\times \C\times \C$ satisfies

(i) $\lambda$ is dominant,

(ii) either $\lambda(h_1)\geq 2$, or $\lambda(h_1)=1$ and $\delta =1$.
\\
Then the simple cuspidal $\mathcal{S}^\prime(1,n)$-module $V(\lambda,\delta, u)$
is isomorphic to a simple subquotient of the Shen-Larsson module
\[F(t^{u-\delta}A_{1,n}, \mathsf{L}(\mu)),\]
where $\mu\in \C^n$ with $\mu_1=\delta-1$, $\mu_i=\delta-\sum_{j=1}^{i-1}\lambda(h_j)$ for $i\in\llrr[2][n]$.
In addition, $\mathsf{L}(\mu)$ is finite-dimensional.

\end{proposition}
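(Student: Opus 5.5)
The plan is to locate inside $F:=F(t^{u-\delta}A_{1,n},\mathsf{L}(\mu))$ a vector that behaves like the generator of $\mathrm{Tens}(\lambda,\delta,u)$. We then use the universal property of the generalized Verma module to map it onto a subquotient of $F$, and finally identify the simple quotient with $V(\lambda,\delta,u)$.

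\textbf{Finite dimensionality.} This part is immediate. Dominance of $\mu$ means $\mu_i-\mu_{i+1}\in\Z_+$. We have $\mu_1-\mu_2=\lambda(h_1)-1\geq 0$ by (ii), and $\mu_i-\mu_{i+1}=\lambda(h_i)\in\Z_+$ for $i\geq 2$ by (i). Hence $\mathsf{L}(\mu)$ is finite-dimensional.

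\textbf{The candidate submodule.} Let $v_\mu$ be the highest weight vector and set
\[W:=\{t^{u-\delta+m}\xi_1\otimes v_\mu\mid m\in\Z\}\subseteq F.\]
I first check, using the explicit formulas for $\pi$, that $W$ is invariant under $\mathcal{S}^\prime(1,n)^{(0)}=\mathrm{Vir}+\C[t^{\pm1}]H_0$. Applying $\pi(E_k)$ involves $t^{k+1}\frac{\mathrm{d}}{\mathrm{d}t}\otimes 1$, the term $(k+1)t^k\otimes(E_{0,0}+E_{1,1})$, the operator $(k+1)t^k\xi_1\frac{\partial}{\partial\xi_1}\otimes 1$, and terms with $E_{s,0}$ and $E_{s,1}$.

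On $\xi_1\otimes v_\mu$ the terms with $E_{s,1}$ for $s\neq 1$ and with $E_{s,0}$ either kill $v_\mu$ or kill $\xi_1$, because of the factors $\partial(\xi_1)/\partial\xi_s$. Note that $E_{1,0}\in\mathfrak{sl}(1,n)^+$, and the $E_{s,1}$ with $s>1$ lie in $\mathfrak{sl}(1,n)^-$ but come multiplied by $\partial_{\xi_s}\xi_1=0$. Collecting the scalars, $E_k$ acts on $t^{u-\delta+m}\xi_1\otimes v_\mu$ by
\[-(u-\delta+m+(k+1)(1+\mu_1))\,t^{u-\delta+m+k}\xi_1\otimes v_\mu.\]
With $\mu_1=\delta-1$ this equals $-(m+\delta k+u)$, matching $\mathrm{Tens}(\lambda,\delta,u)$. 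The choice of twist $t^{u-\delta}$ is exactly what makes the constant come out to $u$. Similarly $t^kh_i$ acts by $\mu_i-\mu_{i+1}$ on the $\mathfrak{sl}(1,n)$ factor, corrected by the $\xi$-weight of $\xi_1$. This yields $\lambda(h_1)-1+1=\lambda(h_1)$ for $i=1$ and $\lambda(h_i)$ otherwise. So $W\cong\mathrm{Tens}(\lambda,\delta,u)$ as an $\mathcal{S}^\prime(1,n)^{(0)}$-module.

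\textbf{The main obstacle.} The hardest step is showing that $\mathcal{S}^\prime(1,n)^+$ annihilates $W$, or at least maps it into a submodule $F''$ with $W\cap F''=0$. I would attempt a direct check on the three families spanning $\mathcal{S}^\prime(1,n)^+$. Each family has $\xi_I$ with $1\in I$ or raises the $F$-grading. Consequently the $K_{1,n}$-part either multiplies $\xi_1$ by another $\xi_1$ (giving zero) or the $\mathfrak{gl}$-part lies in $\mathfrak{sl}(1,n)^+$ (killing $v_\mu$). The third family $t^k\xi_{i_1}\cdots\xi_{i_r}\partial_{\xi_i}$ with $i>i_r$ needs care: its $E_{s,i}$ terms have $s<i$, so they lie in $\mathfrak{sl}(1,n)^+$, and its $E_{0,i}$ term, with $i>1$, is also in $\mathfrak{sl}(1,n)^+$.

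If the vector is not literally annihilated, I would instead use the $\mathrm{ad}F$-grading of $F$. Elements of $\mathcal{S}^\prime(1,n)^+$ strictly raise the $F$-eigenvalue, and if $W$ sits in the top eigenvalue the same conclusion follows. I would therefore check that $\xi_1\otimes v_\mu$ maximizes the $F$-eigenvalue; this holds because $\xi_1$ has the largest $F$-weight $2^n-2$ among the $\xi_I$.

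\textbf{Conclusion.} Granting this, there is a nonzero homomorphism from the generalized Verma module onto $U(\mathcal{S}^\prime(1,n))W\subseteq F$. Its image has a simple quotient. That simple quotient is a quotient of the Verma module, which has the unique simple quotient $V(\lambda,\delta,u)$. Hence $V(\lambda,\delta,u)$ is a simple subquotient of $F$.
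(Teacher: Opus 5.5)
Your proposal is correct and is essentially the paper's proof. It uses the same subspace $t^{u-\delta}\C[t^{\pm 1}]\xi_1\otimes v_\mu$, the same scalar computations identifying it with $\mathrm{Tens}(\lambda,\delta,u)$ as an $\mathcal{S}^\prime(1,n)^{(0)}$-module, annihilation by $\mathcal{S}^\prime(1,n)^+$, the universal property of the generalized Verma module, and the dominance check for $\mathsf{L}(\mu)$. Your fallback for the annihilation step also works and is cleaner than the paper's unspecified ``direct computations'': $\pi(F)$ acts diagonally, and $\xi_1$, $v_\mu$ span the top $F$-eigenspaces of $\Lambda(n)$ and $\mathsf{L}(\mu)$, so your subspace is the top $\pi(F)$-eigenspace and is killed by all positive-degree elements. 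One small omission in your $E_k$ computation: $\pi(E_k)$ also contains a term in $t^{k-1}\xi_1\otimes E_{0,1}$. It vanishes on your vectors because $\xi_1^2=0$, not because $E_{0,1}$ kills $v_\mu$.
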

\begin{proof}
Consider the following subspace of $F(t^{u-\delta}A_{1,n}, \mathsf{L}(\mu))$:
\[F(u-\delta,\mu):=t^{u-\delta}\C[t^{\pm 1}]\xi_1\otimes v_{\mu}=\mathrm{span}_{\C}
\left\{t^{u-\delta+m}\xi_1\otimes v_{\mu}\,\middle|\, m\in \Z\right\}.\]
For all $k,m\in\Z$ and $i\in\llrr[1][n-1]$, we have
\begin{align*}
E_k\cdot t^{u-\delta +m}\xi _1\otimes v_{\mu}={}&-\bigl( u-\delta +m+(k+1)(\mu _1+1) \bigr) t^{u-\delta +m+k}\xi _1\otimes v_{\mu}
\\
={}&-\bigl( u+m+k\delta \bigr) t^{u-\delta +m+k}\xi _1\otimes v_{\mu},\\
\bigl(t^kh_i \bigr)\cdot t^{u-\delta+m}\xi_1\otimes v_{\mu}={}&(\mu_i-\mu_{i+1}+\delta_{i,1})t^{u-\delta+m+k}\xi_1\otimes v_{\mu}\\
={}& \lambda(h_i)t^{u-\delta+m+k}\xi_1\otimes v_{\mu}.
\end{align*}
This shows that
\begin{align*}
  \phi\colon
\mathrm{Tens}\left( \lambda ,\delta ,u \right) \rightarrow &\,\,  F(u-\delta ,\mu )
\\
\overline{t^m}\mapsto &\,\, t^{u-\delta +m}\xi_1\otimes v_\mu ,\forall m\in\Z,
\end{align*}
is an $\mathcal{S}^\prime(1,n)^{\left( 0 \right)}$-module isomorphism.
Moreover, by directly computations, one can verify that $x\cdot F(u-\delta,\mu)=0$ for all $x\in \mathcal{S}^\prime(1,n)^{+}$.
Thus, by the universal property of the induced modules, there exists a unique $\mathcal{S}^\prime(1,n)$-module homomorphism
\[\varPhi \colon \mathrm{Ind}_{\mathcal{S} ^{\prime}(1,n)^{(0)}\oplus \mathcal{S} ^{\prime}(1,n)^+}^{\mathcal{S} ^{\prime}(1,n)}
\mathrm{Tens}(\lambda ,\delta ,u)\rightarrow F(t^{u-\delta}A_{1,n},\mathsf{L}(\mu ))
\]
such that $\varPhi|_{\mathrm{Tens}\left( \lambda ,\delta ,u \right)}=\phi$.
Since $V(\lambda,\delta,u)$ is the unique simple subquotient of the generalized Verma
module, it is isomorphic to a simple subquotient of
$F(t^{u-\delta}A_{1,n},\mathsf{L}(\mu))$.

The assumptions on $\lambda$ and the definition of $\mu$ ensure that
$\mu_i-\mu_{i+1}\in\mathbb{Z}_+$ for all $i\in\llrr[1][n-1]$, which implies that $\mathsf{L}(\mu)$ is
finite-dimensional.
\end{proof}

\begin{proposition}\label{t99}
  Assume that $(\lambda,\delta,u)\in H_0^*\times \C\times \C$ satisfies

(i) $\lambda$ is dominant,

(ii) either $\lambda(h_1)\geq 2$, or $\lambda(h_1)=1$ and $\delta =1$.
\\
Let $\mu\in \C^n$ with $\mu_1=\delta-1$, $\mu_i=\delta-\sum_{j=1}^{i-1}\lambda(h_j)$ for $i\in\llrr[2][n]$.
Then the following assertions hold.

(1) If $\delta=\lambda(h_1)=1$ and $\lambda(h_2)=\cdots=\lambda(h_{n-1})=0$,
then \[V(\lambda, \delta, u) \cong\begin{cases}
  \,\,\Delta A_{1,n}/\C ,&\text{for}\quad  u\in\Z,\\
  \,\,t^{u}A_{1,n} ,& \text{for}\quad u\notin \Z.
\end{cases}
\]

(2) If $\delta=\lambda(h_1)\in\Z_{\geq 2}$ and $\lambda(h_2)=\cdots=\lambda(h_{n-1})=0$,
then \[V(\lambda,\delta,u)\cong \operatorname{diff}(F(t^{u}A_{1,n},N_{n,1}((\delta-1)e_1))).\]

(3) If $\delta=\lambda(h_1)=1$, $\lambda(h_2)=\cdots=\lambda(h_{n-2})=0$ and $\lambda(h_{n-1})\in\Z_{\geq 1}$,
then \[V(\lambda,\delta,u)\cong \operatorname{diff}(F(t^{u}A_{1,n},N_{n,1}(-\mathbf{1}-\lambda(h_{n-1})e_n))).\]

(4) Except in cases (1), (2) and (3), we have
$V(\lambda,\delta,u)\cong
F(t^{u-\delta}A_{1,n},\mathsf{L}(\mu))$.
\end{proposition}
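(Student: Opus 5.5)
By Proposition \ref{p10}, $V(\lambda,\delta,u)$ is a simple subquotient of $F(t^{u-\delta}A_{1,n},\mathsf{L}(\mu))$, and it is the subquotient containing the image of $F(u-\delta,\mu)=t^{u-\delta}\C[t^{\pm1}]\xi_1\otimes v_\mu$. The plan is to find that subquotient explicitly. We first decide whether $\mathsf{L}(\mu)$ is of the form $L(N(\lambda'))=N_{n,1}(\lambda')$ up to parity change, and then read the answer off from Theorems \ref{t1}, \ref{t3} and Propositions \ref{p6}, \ref{p98}, \ref{p99}.

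\textbf{Step 1: identify $\mathsf{L}(\mu)$.} The module $N_{n,1}(\nu)$ lies in $A^+_{n,1}(\nu)$ and is finite-dimensional. Hence $\nu$ ranges over $\Z_+^n$, where $N_{n,1}(\nu)$ is spanned by monomials of fixed total degree (a symmetric power together with its odd partner), or $\nu=\mathbf{-1}-ke_n$ type, where one gets duals. We compute the highest weight of $N_{n,1}(\nu)$ with respect to the grading \eqref{eq99}, where $\mathfrak{sl}(1,n)^+$ contains $E_{1,0}=t_1\partial_\xi$, $E_{0,i}=\xi\partial_{t_i}$ for $i>1$, and $E_{s,t}$ for $s<t$.

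For $\nu=(d-1)e_1$, $d\ge 1$, the highest vector is $t_1^{d}$ (or $t_1^{d-1}\xi$ when $d=1$). Its weights $E_{0,0}+E_{i,i}$ give $\mu=(d-1,d,\dots,d)$ up to the normalization, which matches $\mu_1=\delta-1$ and $\mu_i=\delta-\lambda(h_1)$ exactly when $\delta=\lambda(h_1)$ and $\lambda(h_j)=0$ for $j\ge2$. The analogous computation for the dual family $-\mathbf{1}-ke_n$ gives case (3). The cases $\nu=\mathbf 0$ and $\nu=e_i$ correspond to $\delta=\lambda(h_1)=1$. Conversely, Theorem \ref{t1}, combined with the observation that $\mathsf{L}(\mu)$ is a simple weight module, shows that whenever $\mu$ is not of these shapes, $\mathsf{L}(\mu)$ is not of the form $L(N(\lambda'))$ up to parity. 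In that case $F(t^{u-\delta}A_{1,n},\mathsf{L}(\mu))$ is simple, so it equals $V(\lambda,\delta,u)$, which is (4).

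\textbf{Step 2: the exceptional cases.} In case (2), $\mathsf{L}(\mu)\cong N_{n,1}(\nu)$ with $\nu=\delta e_1$ (up to parity), and $\nu\neq\mathbf 0,\mathbf{-1},e_i$ when $\delta\ge2$. By Theorem \ref{t3}, the only nontrivial simple subquotients are the socle $\mathrm{diff}(F(P,N_{n,1}((\delta-1)e_1)))$ and the top $\mathrm{diff}(F(P,N_{n,1}(\nu)))$. One has to check which of them contains $t^{u-\delta+m}\xi_1\otimes v_\mu$. We would compute $\mathrm{diff}(t^{u-\delta+m+1}\otimes t_1^{\delta-1})$ directly: its $\xi$-component is $\frac{\mathrm d}{\mathrm dt}t^{\cdot}\otimes t_1^{\delta-1}\xi$ and there is a $\partial_{\xi_s}$-term. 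Matching with $\xi_1\otimes v_\mu$ then identifies $V$ with $\mathrm{diff}(F(t^uA_{1,n},N_{n,1}((\delta-1)e_1)))$, the $t$-exponent shifting by $\delta$ through the identification $P=t^{u-\delta}A_{1,n}=t^uA_{1,n}$ when $\delta\in\Z$. Case (3) is handled the same way with the dual family, using Theorem \ref{t3}, or Proposition \ref{p98} when $\lambda(h_{n-1})$ makes $\nu=\mathbf{-1}$.

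In case (1), $\mathsf{L}(\mu)$ is $N_{n,1}(e_1)$ or trivial, and Propositions \ref{p6}, \ref{p99} apply. The module $t^{u}A_{1,n}$ is simple when $u\notin\Z$, since it is then $\not\cong A_{1,n}$ and $\Delta P=P$. When $u\in\Z$, the unique nontrivial simple subquotient is $\Delta A_{1,n}/\C$. It remains to show that the vector generating $V$ lands in the relevant piece and not in a trivial layer; this holds because $E_0$ acts on it nontrivially, and the trivial layers are killed by everything.

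\textbf{Main obstacle.} The hardest step is Step 1: verifying precisely, using the highest-weight computation with the nonstandard Borel \eqref{eq99}, that the $\mu$ of the exceptional cases are exactly those realized by some $N_{n,1}(\nu)$, including the parity bookkeeping. The second delicate point is deciding, in cases (2) and (3), whether $V$ is the socle or the top layer. We would settle this by checking whether $\xi_1\otimes v_\mu$ lies in $\ker\mathrm{diff}$.
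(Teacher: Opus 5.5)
Your overall route is the paper's: identify $\mathsf{L}(\mu)$ with some $N_{n,1}(\nu)$ up to parity, read off the answer from Theorem~\ref{t3} and Propositions~\ref{p6}, \ref{p98}, \ref{p99}, and use Theorem~\ref{t1} for (4). Your extra idea of locating $V(\lambda,\delta,u)$ by testing the generators $t^{a}\xi_1\otimes v_\mu$ against $\ker\mathrm{diff}$ is the right tool, and the paper's bare citations leave it implicit.

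However, Step~1, which you call the crux, is computed wrongly, and Step~2 rests on it. Every even vector of $N_{n,1}((d-1)e_1)$ has $t$-degree $d-1$, so $t_1^{d}$ does not lie in this module. Its highest weight vector for \eqref{eq99} is $t_1^{d-1}$: it is killed by $t_1\frac{\partial}{\partial\xi}$, by $\xi\frac{\partial}{\partial t_i}$ for $i>1$, and by $t_s\frac{\partial}{\partial t_r}$ for $s<r$, and it has weight $(d-1,0,\dots,0)$. Your weight $(d-1,d,\dots,d)$ would force $\lambda(h_1)=0$, which contradicts your own conclusion. The correct identifications are as follows. In case (2), $\mathsf{L}(\mu)\cong N_{n,1}((\delta-1)e_1)$, not $N_{n,1}(\delta e_1)$. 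In case (1), $\mu=\mathbf{0}$ and $\mathsf{L}(\mu)$ is trivial. Finally, $\nu=e_i$ corresponds to $\delta=\lambda(h_1)=2$, which Theorem~\ref{t3} excludes, so there you need Proposition~\ref{p99}.

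Since $N_{n,1}(\delta e_1)$ contains no vector of weight $\mu$, Proposition~\ref{p10} tells you nothing about $F(P,N_{n,1}(\delta e_1))$. Your ``socle'' answer lands on the right module only because $\mathrm{diff}(F(P,N_{n,1}((\delta-1)e_1)))$ is both the socle of that module and the top of the correct one. The correct check is
\[
\mathrm{diff}(t^{a}\xi_1\otimes t_1^{\delta-1})=-a\,t^{a-1}\xi_1\otimes t_1^{\delta-1}\xi-t^{a}\otimes t_1^{\delta}\neq 0 .
\]
So the submodule $W$ generated by $t^{u-\delta}\C[t^{\pm1}]\xi_1\otimes v_\mu$ is not contained in $\tilde{F}(P,(\delta-1)e_1)$. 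The quotient by $\tilde{F}$ is simple and $V(\lambda,\delta,u)$ is the unique simple quotient of $W$, so $V(\lambda,\delta,u)$ is that top layer.

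In case (3) you must actually carry out the ``analogous computation'', and it again decides the layer. For $\nu\in\Z^n_{\le-1}$, no even monomial is killed by $\xi\frac{\partial}{\partial t_n}$. An odd $t^{\alpha}\xi$ is killed by $\mathfrak{sl}(1,n)^+$ if and only if $\alpha_1=\dots=\alpha_{n-1}=-1$. Hence $v_\mu=t_1^{-1}\cdots t_{n-1}^{-1}t_n^{-1-\ell}\xi$ with $\ell=\lambda(h_{n-1})$. Its total degree is $1-n-\ell$, so it lies in $N_{n,1}(-\mathbf{1}-(\ell-1)e_n)$.

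Here $\xi v_\mu=t_1v_\mu=0$, so $\mathrm{diff}$ kills every $t^{a}\xi_1\otimes v_\mu$ and $W\subseteq\tilde{F}$. Since $\tilde{F}$ modulo the socle is zero or trivial, $V(\lambda,\delta,u)$ is the socle $\mathrm{diff}(F(P,N_{n,1}(-\mathbf{1}-\ell e_n)))$. This uses Theorem~\ref{t3} for $\ell\ge 2$ and Proposition~\ref{p98} for $\ell=1$, so your mention of Proposition~\ref{p98} is apt. The paper's proof places this vector in $N_{n,1}(-\mathbf{1}-\ell e_n)$, an off-by-one of its own; its conclusion survives for the same socle/top reason.

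Finally, Theorem~\ref{t1} does not show that $\mathsf{L}(\mu)\not\cong N_{n,1}(\nu),\Pi(N_{n,1}(\nu))$ outside cases (1)--(3). That converse is exactly the highest-weight classification above. Finite-dimensionality forces $\nu\in\Z^n_+\cup\Z^n_{\le-1}$, and the highest weights are then $(d,0,\dots,0)$ with $d\ge 0$, or $(0,\dots,0,-k)$ with $k\ge1$. Only after this does Theorem~\ref{t1} give simplicity in case (4). For $n=2$ the same computation places the second family at $\delta=1$, $\lambda(h_1)\ge 2$, so (3) as stated tacitly requires $n\ge 3$.
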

\begin{proof}
  In cases (1) and (2), the vector $t_1^{\delta-1}\in N_{n,1}((\delta-1)e_1)$ is the highest weight vector of highest weight $\mu$ with respect to the
triangular decomposition \eqref{eq99}. Hence, $\mathsf{L}(\mu) \cong N_{n,1}((\delta-1)e_1)$.
By Proposition \ref{p10}, $V(\lambda,\delta,u)$ is a simple subquotient of $F(t^{u}A_{1,n}, N_{n,1}((\delta-1)e_1))$. Then
the assertions (1) and (2) follow from Theorem \ref{t3}, Proposition \ref{p6}, \ref{p99}.

In case (3), the vector
$t_1^{-1}\cdots t_{n-1}^{-1}t_n^{-1-\lambda(h_{n-1})}\xi
\in N_{n,1}(-\mathbf{1}-\lambda(h_{n-1})e_n)$
is the highest weight vector of highest weight $\mu$ with respect to the
triangular decomposition \eqref{eq99}. Hence,
$\mathsf{L}(\mu)\cong N_{n,1}(-\mathbf{1}-\lambda(h_{n-1})e_n)$.
By Proposition \ref{p10}, $V(\lambda,\delta,u)$ is a simple subquotient of $F(t^{u}A_{1,n},N_{n,1}(-\mathbf{1}-\lambda(h_{n-1})e_n))$.
Then the assertion (3) follows from Theorem \ref{t3}.

Except in cases (1), (2) and (3), $\mathsf{L}(\mu)\ncong N_{n,1}(\mu^\prime)$ for any $\mu^\prime\in \C^n$. By Theorem \ref{t1},
we know that $F(t^{u-\delta}A_{1,n},\mathsf{L}(\mu))$
  is simple. Then assertion (4) follows from Proposition \ref{p10}.
\end{proof}

\begin{theorem}\label{t100}
 Up to a parity-change,  every simple cuspidal $\mathcal{S}^\prime(1,n)$-module is a simple subquotient of a Shen-Larsson module $F(P,M)$ for some simple weight $K_{1,n}$-module $P$ and some finite-dimensional simple $\mathfrak{sl}(1,n)$-module $M$. More precisely, up to a parity-change, it is isomorphic to one
  of the following simple cuspidal $\mathcal{S}^\prime(1,n)$-modules:

  (1) $\Delta A_{1,n}/\mathbb{C}$, $t^{a}A_{1,n}$, where
      $a\in\mathbb{C}\setminus\mathbb{Z}$;

  (2) $\operatorname{diff}\left(F\left(t^{a}A_{1,n},N_{n,1}(ke_1)\right)\right)$,
  $\operatorname{diff}\left(F\left(t^{a}A_{1,n},N_{n,1}(-\mathbf{1}-ke_n)\right)\right)$,
      where $a\in\mathbb{C}$ and $k\in \mathbb{Z}_{\geq 1}$;

  (3) $F\left(t^{a}A_{1,n},M\right)$, where $a\in\mathbb{C}$ and $M$
      is a finite-dimensional simple weight $\mathfrak{sl}(1,n)$-module that
      is not isomorphic to any $N_{n,1}(\lambda), \Pi(N_{n,1}(\lambda))$ with
      $\lambda\in \mathbb{Z}^n_+ \cup \mathbb{Z}^n_{\leq -1}$.
\end{theorem}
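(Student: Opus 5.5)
The plan is to combine Theorem \ref{t9} with Proposition \ref{t99}, and then rename parameters. By Theorem \ref{t9}, up to a parity-change every simple cuspidal module is $V(\lambda,\delta,u)$ for some $(\lambda,\delta,u)$ satisfying (i) and (ii). Proposition \ref{t99} splits these triples into four cases. In each case we will check that the module obtained lies on the list (1)--(3). Conversely, every module on the list will be shown to arise from some admissible triple. The first sentence of the theorem then follows from Proposition \ref{p10}: $t^{u-\delta}A_{1,n}$ is a simple weight $K_{1,n}$-module by Theorem \ref{t2}(3), and $\mathsf{L}(\mu)$ is finite-dimensional.

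\textbf{Cases (1)--(3) of Proposition \ref{t99}.} In case (1) we get $\Delta A_{1,n}/\C$ when $u\in\Z$, and $t^uA_{1,n}$ with $a=u\notin\Z$ otherwise; this is item (1). In case (2) we set $k=\delta-1\geq1$ and $a=u$, giving the first family of item (2). In case (3) we set $k=\lambda(h_{n-1})\geq1$, giving the second family. Conversely, each module in items (1) and (2) is realized by the corresponding triple: take $\delta=\lambda(h_1)=1$ and the other $\lambda(h_j)=0$; or $\delta=\lambda(h_1)=k+1$; or $\delta=\lambda(h_1)=1$ and $\lambda(h_{n-1})=k$. Each such triple satisfies (i) and (ii), so by Theorem \ref{t9} these modules are indeed simple cuspidal.

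\textbf{Case (4).} Here $V\cong F(t^{u-\delta}A_{1,n},\mathsf{L}(\mu))$, where $\mathsf{L}(\mu)$ is finite-dimensional and not isomorphic to any $N_{n,1}(\mu')$. This step is the main obstacle. We must reconcile the exclusion condition in item (3) of the theorem with the one in Proposition \ref{t99}(4), in both directions.

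First, $N_{n,1}(\lambda)$ is finite-dimensional exactly when $\lambda\in\Z_+^n\cup\Z_{\leq-1}^n$. Indeed, the weight set $\Gamma(\lambda)$ intersected with the hyperplane $|\cdot|=|\lambda|$ is finite only in these cases. Hence a finite-dimensional $M$ of the form $N_{n,1}(\lambda)$ or $\Pi N_{n,1}(\lambda)$ must have $\lambda$ in this set.

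Forward direction: the module from case (4) avoids this set, so it belongs to item (3).

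Converse direction: let $M$ be a finite-dimensional simple weight module not of the excluded type. By Lemma \ref{l1}, up to parity $M$ is a highest weight module $\mathsf{L}(\mu)$ with $\mu$ dominant. We then choose $\delta=\mu_1+1$ and $\lambda(h_1)=\mu_1-\mu_2+1$, $\lambda(h_i)=\mu_i-\mu_{i+1}$ for $i\geq2$, and $u=a+\delta$.

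For this triple, (i) holds, and (ii) holds unless $\mu_1=\mu_2$ with $\delta\neq1$. In that subcase we must show that $\mathsf{L}(\mu)$ is a typical-excluded type, or handle it separately. My first attempt is to use Theorem \ref{t1} directly. Since $M\ncong N_{n,1}(\cdot)$, the module $F(t^aA_{1,n},M)$ is simple. It is also cuspidal: its $E_0$-weight spaces are finite-dimensional and its support is unbounded in both directions. So by Theorem \ref{t9} it equals some $V(\lambda',\delta',u')$, which lies in case (4). Thus it is on the list regardless of whether the triple chosen above is admissible, and the converse direction does not need to analyse (ii) at all.

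Checking the cuspidality claims, and that the modules in the different cases are distinct, is routine weight bookkeeping.
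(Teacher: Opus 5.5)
Your proposal is correct to the same extent as the paper's argument, and it is essentially the paper's own proof, which also just combines Theorem~\ref{t9}, Proposition~\ref{p10} and Proposition~\ref{t99}. Using Theorem~\ref{t1} to see directly that the modules in item (3) are simple, with cuspidality by weight bookkeeping, is a clean way to supply the converse the paper leaves implicit; your worry about (ii) is moot anyway, because for $\mu_1\neq 0$ the vector $E_{0,1}v_\mu\neq 0$ is killed by all $E_{0,i}$ and $E_{s,t}$ ($s<t$) and has weight $(\mu_1,\mu_2+1,\dots,\mu_n+1)$, so finite-dimensionality of $\mathsf{L}(\mu)$ already forces $\mu_1>\mu_2$.

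One caveat, inherited from the paper rather than introduced by you: for $n=2$ one has $h_{n-1}=h_1$. Then case (3) of Proposition~\ref{t99} collapses into case (1), and your triple $\delta=\lambda(h_1)=1$, $\lambda(h_{n-1})=k$ does not exist for $k\geq 2$. For $n=2$ the second family of item (2) actually arises from $\delta=1$, $\lambda(h_1)=k+1$. Proposition~\ref{t99} as written puts these triples in case (4), although there $\mathsf{L}(\mu)\cong N_{2,1}(-\mathbf{1}-(k-1)e_2)$, so for that family you should get simplicity directly from Proposition~\ref{p4}.
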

\begin{proof}
This follows from Theorem \ref{t9}, Proposition  \ref{p10} and Proposition \ref{t99}.
\end{proof}

\vspace*{10pt}
\noindent{\large\bf Acknowledgments}

  R. L\"{u} is partially supported by National Natural Science Foundation of China (Grant No. 12271383).
  X. Wang is supported by the China Postdoctoral
  Science Foundation (Grant No. 2025M773100) and the Postdoctoral Fellowship Program of CPSF (Grant No. GZC20252035).

\phantomsection

\end{document}